\DeclareSymbolFontAlphabet{\mathbbl}{bbold}
\DeclareSymbolFontAlphabet{\mathbb}{AMSb}%
 \newtheorem{thm}{Theorem}[section]
 \newtheorem{cor}[thm]{Corollary}
 \newtheorem{prop}[thm]{Proposition}
\theoremstyle{definition}
 \theoremstyle{remark}
 \newtheorem{rem}[thm]{Remark}
\newcommand{\supp}{\mathop{\mathrm{supp}}}
\DeclareRobustCommand\widecheck[1]{{\mathpalette\@widecheck{#1}}}
\def\@widecheck#1#2{%
    \setbox\z@\hbox{\m@th$#1#2$}%
    \setbox\tw@\hbox{\m@th$#1%
       \widehat{%
          \vrule\@width\z@\@height\ht\z@
          \vrule\@height\z@\@width\wd\z@}$}%
    \dp\tw@-\ht\z@
    \@tempdima\ht\z@ \advance\@tempdima2\ht\tw@ \divide\@tempdima\thr@@
    \setbox\tw@\hbox{%
       \raise\@tempdima\hbox{\scalebox{1}[-1]{\lower\@tempdima\box
\tw@}}}%
    {\ooalign{\box\tw@ \cr \box\z@}}}
\numberwithin{equation}{section}
\begin{document}

\title{Variation and oscillation for harmonic operators in the inverse Gaussian setting}

\author[V. Almeida]{V\'ictor Almeida}
\address{V\'ictor Almeida, Jorge J. Betancor, \newline
	Departamento de An\'alisis Matem\'atico, Universidad de La Laguna,\newline
	Campus de Anchieta, Avda. Astrof\'isico S\'anchez, s/n,\newline
	38721 La Laguna (Sta. Cruz de Tenerife), Spain}
\email{valmeida@ull.edu.es, jbetanco@ull.es}

\author[J. J. Betancor]{Jorge J. Betancor}



\thanks{This paper is partially supported by grant PID2019-106093GB-I00 from the Spanish Government}

\subjclass[2010]{42B20, 42B25.}

\keywords{}

\date{\today}

\begin{abstract}
We prove variation and oscillation $L^p$-inequalities associated with fractional derivatives of certain semigroups of operators and with the family of truncations of Riesz transforms in the inverse Gaussian setting. We also study these variational $L^p$-inequalities in a Banach-valued context by considering Banach spaces with the UMD-property and whose martingale cotype is fewer than the variational exponent. We establish $L^p$-boundedness properties for weighted difference involving the semigroups under consideration.

\end{abstract}

\maketitle

\section{Introduction}
Variational inequalities have been studied in probability, harmonic analysis and ergodic theory in the past decades. The main interest of these variational  inequalities for a family of operators is that they give us some information about the speed of convergence of the family of operators considered and how this convergence occurs.

Suppose that $(\Omega,\mu)$ is a measure space and $\{T_t\}_{t>0}$ is a set of operators in $L^p(\Omega,\mu)$ with $1\leq p<\infty$. The usual procedure to prove, for instance, the existence of the pointwise limit $\displaystyle\lim_{t\rightarrow 0}T_t(f)(x)$ for every $f\in L^p(\Omega,\mu)$ is the following one. In a first step we define a subspace $F$ of $L^p(\Omega,\mu)$ consisting of smooth enough functions for which it is easier to see that the limit exists for almost all $x\in\Omega$. A typical election is $F=C_c^\infty(\Omega)$. Then, we consider the maximal operator $T_*$ associated with $\{T_t\}_{t>0}$ defined by
$$T_*(f)=\sup_{t>0}|T_t(f)|.$$
It is established that $T_*$ is bounded from $L^p(\Omega,\mu)$ into itself, when $1<p<\infty$, or from $L^1(\Omega,\mu)$ into $L^{1,\infty}(\Omega,\mu)$, when $p=1$. Finally, the Banach principle is used to extend the existence of limit to every function in $L^p(\Omega,\mu)$.

Variational inequalities allow us to get pointwise convergence associated with $\{T_t\}_{t>0}$ without using Banach principle. In this sense variational inequalities are better than maximal inequalities. The first variation inequality was established by Leplinge (\cite{Le}) for martingales improving the Doob maximal inequality. A simple proof of Lepingle's inequality was given by Pisier and Xu (\cite{PX}). Bourgain (\cite{Bou1}) proved the first variational inequality in the ergodic setting by considering ergodic averages for dinamical systems. This inequality was extended from $L^2(\Omega)$ to $L^p(\Omega)$, $1<p<\infty$, in \cite{JKRW}.

After \cite{Bou1} many variational inequalities in harmonic analysis and ergodic theory have been proved. Classical families of operators have been considered. For instance: semigroups of operators (\cite{BCT}, \cite{CT}, \cite{HMMT}, \cite{JW} and \cite{LeX}), differential operators (\cite{HM2} and  \cite{MTX}), singular integrals (\cite{CJRW1}, \cite{CJRW2}, \cite{HLM}, \cite{HLP}, \cite{MTX} and \cite{MT}), multipliers (\cite{DK}), and Fourier series (\cite{DL} and \cite{OSTTW}).

In this paper we study oscillation and variation inequalities, jump functions and differential transforms in the inverse Gaussian setting.

Next, we set out  the definitions of the operators, and the context, that we are going to consider. The Gaussian measure is usually denoted by $\gamma$ and it is the measure with density function $\displaystyle\frac{e^{-|x|^2}}{\pi^{n/2}}$, $x\in\mathbb R^n$, with respect to the Lebesgue measure in $\mathbb R^n$. Our setting will be $(\mathbb R^n,\gamma_{-1})$ where by $\gamma_{-1}$ we denote the inverse Gaussian measure whose density function with respect to the Lebesgue measure is $e^{|x|^2}\pi^{n/2}$, $x\in\mathbb R^n$. We consider a family $\{L_t\}_{t>0}$ of operators defined on $L^p(\mathbb R^n, \gamma_{-1})$, $1\leq p<\infty$.

Let $\rho >2$. We define the $\rho$-variation operator $V_{\rho}(\{L_t\}_{t>0})$ of $\{L_t\}_{t>0}$ by

$$V_{\rho}(\{L_t\}_{t>0})(f)=\sup_{0<t_k<...<t_1,\;k\in\mathbb N}\left(\sum_{j=1}^{k-1}|L_{t_j}(f)-L_{t_{j+1}}(f)|^\rho\right)^{1/\rho}.$$

It is not directly seen that $V_{\rho}(\{L_t\}_{t>0})(f)$ is a measurable function in $\mathbb R^n$. As it was commented after the statement of \cite[Theorem 1.2]{CJRW1} some continuity in $t\in (0,\infty)$ must be satisfied by $\{L_t\}_{t>0}$.

In order to obtain $L^p$-variation inequalities it is usually needed to impose that $\rho >2$. (see \cite{Qi} for the case of martingales, and \cite{JW} for the Poisson semigroup).

For every $\lambda >0$ we define $\lambda$-jump  operator as follows

\begin{align*}
\Lambda(\{L_t\}_{t>0},\lambda)(f)(x)= & \sup\left\{n\in\mathbb N\;:\;\mbox{such that there exists}\; s_1<t_1\leq s_2<t_2....\leq s_n<t_n \right. \\
& \left. \mbox{with the property}\; |L_{t_i}(f)(x)-L_{s_i}(f)(x)|>\lambda,\;i=1,...,n\right\}
\end{align*}
We have that
\begin{equation}\label{eq1.1}
\lambda\left[\Lambda(\{L_t\}_{t>0},\lambda)(f)\right]^{1/\rho}\leq 2^{1+1/q}V_{\rho}(\{L_t\}_{t>0})(f),\;\;\;\lambda >0.
\end{equation}
Under some conditions (see \cite[Lemma 2.1]{JSW}) variational inequalities can be deduced from $L^p$- inequalities involving the jump function.

Convergence properties for $\{L_t(f)\}_{t>0}$ can be deduce from the finiteness of both $V_{\rho}(\{L_t\}_{t>0})(f)$ and  $\Lambda(\{L_t\}_{t>0},\lambda)(f)$, $\lambda>0$. However variation inequalities are not consequence of convergence (\cite[Remark 2.1]{JW}).

We also consider the following operators when the exponent is $2$. For every $k\in\mathbb N$ we define

$$V_k(\{L_t\}_{t>0})(f)=\sup_{\begin{array}{c} \frac{1}{2^k}<t_1<...<t_m\leq\frac{1}{2^{k-1}} \\
 m\in\mathbb N
\end{array}}\left(\sum_{j=1}^{m-1}|L_{t_j}(f)-L_{t_{j+1}}(f)|^2\right)^{1/2}.$$
We call the short variation operator associated with $\{L_t\}_{t>0}$ to that defined by
$$S_V(\{L_t\}_{t>0})(f)=\left(\sum_{k=-\infty}^\infty\left(V_k(\{L_t\}_{t>0})(f)\right)^2\right)^{1/2}.$$

If $\{t_j\}_{j\in\mathbb N}\subset (0,\infty)$ is a decreasing sequence that converges to $0$, the oscillation operator $\mathcal O(\{L_t\}_{t>0},\{t_j\}_{j\in\mathbb N})(f)$ associated with $\{L_t\}_{t>0}$ defined by $\{t_j\}_{j\in\mathbb N}$ is given by

$$\mathcal O(\{L_t\}_{t>0},\{t_j\}_{j\in\mathbb N})(f)=\left(\sum_{i=1}^\infty\sup_{t_{i+1}\leq s_{i+1}<s_i<t_i}|L_{s_i}(f)-L_{s_{i+1}}(f)|^2\right)^{1/2}.$$

The short variation and the oscillation operators can be defined with exponent $\rho >2$ but the new operators are controlled by $S_V$ and $O$, respectively. Some example of $\rho$-oscillation operators with $\rho <2$ that are unbounded in $L^p$ can be found in \cite{AJS}.

For every $k\in\mathbb N$ by $H_k$ we denote the Hermite polynomial defined by

$$H_k(z)=(-1)^ke^{z^2}\frac{d^k}{dz^k}e^{-z^2},\;\;\;\;z\in \mathbb R.$$
If $k=(k_1,...,k_n)\in\mathbb N^n$ the Hermite polynomial $\mathcal H_k$ in $\mathbb R^n$ is given by

$$\mathcal H_k(x)=\prod_{i=1}^n H_{k_i}(x_i),\;\;\;\;x=(x_1,...,x_n)\in\mathbb R^n.$$
We consider $\widetilde{\mathcal H}_k(x)=\mathcal H_k(x)e^{-|x|^2}$, $x\in\mathbb R^n$ and $k\in\mathbb N^n$. The sequence $\{\widetilde{\mathcal H}_k\}_{k\in\mathbb N^n}$ is a base for $L^2(\mathbb R^n,\gamma_{-1})$.

We denote by $\mathcal A$ the differential operator

$$\mathcal A=-\frac12\Delta-x\cdot\nabla\;\;\;\;\mbox{in}\;\mathbb R^n.$$
We have that $\mathcal A\widetilde{\mathcal H}_k=(|k|+n)\widetilde{\mathcal H}_k$, $k\in\mathbb N$. Here, $|k|=k_1+...+k_n$, when $k=(k_1,...,k_n)\in\mathbb N^n$.

We define the operator $\overline{\mathcal A}$ by

$$\overline{\mathcal A}f=\sum_{k\in\mathbb N^n}c_k(f)(|k|+n)\widetilde{\mathcal H}_k, \;\;\;f\in D(\overline{\mathcal A}),$$
where the domain $ D(\overline{\mathcal A})$ of $\overline{\mathcal A}$ is given by

$$ D(\overline{\mathcal A})=\left\{f\in L^2(\mathbb R^n,\gamma_{-1})\;:\;\sum_{k\in\mathbb N^n}|c_k(f)|^2(|k|+n)^2\|\widetilde{\mathcal H}_k\|^2_{ L^2(\mathbb R^n,\gamma_{-1})}<\infty\right\}.$$
Here, for every $f\in L^2(\mathbb R^n,\gamma_{-1})$,

$$c_k(f)=\|\widetilde{\mathcal H}_k\|^{-2}_{ L^2(\mathbb R^n,\gamma_{-1})}\int_{\mathbb R^n}f(x)\widetilde{\mathcal H}_k(x)e^{|x|^2}\pi^{n/2}dx,\;\;\;k\in\mathbb N^n.$$
If $f\in C^\infty_c(\mathbb R^n)$, the space of smooth functions with compact support in $\mathbb R^n$, $\mathcal A f=\overline{\mathcal A}f$. $\overline{\mathcal A}$ is closed and selfadjoint. In the sequel we will write $\mathcal A$ to refer us also to $\overline{\mathcal A}$.

We define, for every $t>0$,

$$T_t^{\mathcal A}(f)=\sum_{k\in\mathbb N^n}c_k(f)e^{-(|k|+n)}\widetilde{\mathcal H}_k,\;\;\;\;\;f\in  L^2(\mathbb R^n,\gamma_{-1}).$$
$\{T_t^{\mathcal A}\}_{t>0}$ is the semigroup of operators generated by $-\mathcal A$. By using Mehler formula we can see that, for every $t>0$,

\begin{equation}\label{eq1.2}
T_t^{\mathcal A}(f)(x)=\int_{\mathbb R^n}T_t^{\mathcal A}(x,y)f(y)dy,\;\;\;\; f\in L^2(\mathbb R^n,\gamma_{-1}),
\end{equation}
where
$$T_t^{\mathcal A}(x,y)=\frac{e^{-nt}}{\pi^{n/2}}\frac{e^{-|x-e^{-t}y|^2/(1-e^{-2t})}}{(1-e^{-2t})^{n/2}},\;\;\;\;x,y\in\mathbb R^n.$$
$\{T_t^{\mathcal A}\}_{t>0}$, where $T_t^{\mathcal A}$ is defined by (\ref{eq1.2}) for every $t>0$, is a symmetric diffusion semigroup in $ L^p(\mathbb R^n,\gamma_{-1})$, $1\leq p<\infty$.

Harmonic analysis associated with the operator $\mathcal A$ was initiated by Salogni (\cite{Sa}). She studied the maximal operator $T_*^{\mathcal A}$ defined by $\{T_t^{\mathcal A}\}_{t>0}$ and some $\mathcal A$-spectral multipliers in $ L^p(\mathbb R^n,\gamma_{-1})$. Bruno (\cite{Br}) introduced Hardy type spaces adapted to inverse Gaussian setting. Recently, Bruno and Sjorgren (\cite{BrSj}) investigated Riesz transforms associated with $\mathcal A$ proving that they are bounded from $ L^1(\mathbb R^n,\gamma_{-1})$ into $ L^{1,\infty}(\mathbb R^n,\gamma_{-1})$ if  only if its order is less than three.  Higher order Riesz transforms in the $\mathcal A$-setting have been studied in \cite{B1}. In \cite{BCdL}, an study about maximal operator associated with $\{T_t^{\mathcal A}\}_{t>0}$ on K\"othe functions spaces is developed.

The subordinate Poisson semigroup $\{P_t^{\mathcal A}\}_{t>0}$ generated by $-\sqrt{\mathcal A}$ is given by

$$P_t^{\mathcal A}(f)=\frac{t}{2\sqrt{\pi}}\int_0^\infty\frac{e^{-t^2/4u}}{u^{3/2}}T_u^{\mathcal A}(f)(u)du.$$

If $m\in \mathbb N$, $m-1\leq\alpha < m$ and $g\in C^m(0,\infty)$ the Weyl derivative $D^\alpha g$ is defined by

$$D^\alpha g(t)=\frac{1}{\Gamma(m-\alpha)}\int_0^\infty g^{(m)}(t+s)s^{m-\alpha-1}ds,\;\;\;\;t\in(0,\infty).$$

Our next results are concerned with oscillation and variational operators associated with the uniparametric families of operators $\{t^\alpha\partial^\alpha_t T^{\mathcal A}_t\}_{t>0}$ and $\{t^\alpha\partial^\alpha_t P^{\mathcal A}_t\}_{t>0}$.

\begin{thm}\label{th1.1}
Suppose that $\alpha\geq 0$, $\rho >2$, $\lambda >0$ and $\{t_j\}_{j\in\mathbb N}$ is a decreasing sequence that converges to zero. The operators $\mathcal O(\{t^\alpha\partial^\alpha_t T^{\mathcal A}_t\}_{t>0},\{t_j\}_{j\in\mathbb N})$, $V_{\rho}(\{t^\alpha\partial^\alpha_t T^{\mathcal A}_t\}_{t>0})$, $S_V(\{t^\alpha\partial^\alpha_t T^{\mathcal A}_t\}_{t>0})$ and $\lambda\Lambda(\{t^\alpha\partial^\alpha_t T^{\mathcal A}_t\}_{t>0},\lambda)^{1/\rho}$ are bounded
\begin{enumerate}
    \item[(i)] from $ L^p(\mathbb R^n,\gamma_{-1})$ into itself, for every  $1< p<\infty$,
    \item[(ii)] from $ L^1(\mathbb R^n,\gamma_{-1})$ into $ L^{1,\infty}(\mathbb R^n,\gamma_{-1})$, when $0\leq\alpha\leq 1$.
\end{enumerate}
The operators $\mathcal O(\{t^\alpha\partial^\alpha_t P^{\mathcal A}_t\}_{t>0},\{t_j\}_{j\in\mathbb N})$, $V_{\rho}(\{t^\alpha\partial^\alpha_t P^{\mathcal A}_t\}_{t>0})$, $S_V(\{t^\alpha\partial^\alpha_t P^{\mathcal A}_t\}_{t>0})$ and $\lambda\Lambda(\{t^\alpha\partial^\alpha_t P^{\mathcal A}_t\}_{t>0},\lambda)^{1/\rho}$ are bounded from $ L^p(\mathbb R^n,\gamma_{-1})$, for every $1<p<\infty$, into itself and from $ L^1(\mathbb R^n,\gamma_{-1})$ into $ L^{1,\infty}(\mathbb R^n,\gamma_{-1})$. Furthermore the $L^p$-boundedeness properties of the $\lambda$-jump operator hold uniformly in $\lambda$.
\end{thm}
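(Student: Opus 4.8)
The plan is to treat the four operators simultaneously by realizing each of them as, or dominating it by, the norm of a Banach-space-valued operator. Fix a Banach space $E$ and consider $g\mapsto\mathcal{G}g:=(t\mapsto t^\alpha\partial_t^\alpha T_t^{\mathcal{A}}g)$; then $V_\rho(\{t^\alpha\partial_t^\alpha T_t^{\mathcal{A}}\})(g)=\|\mathcal{G}g\|_E$ with $E$ the space of functions of bounded $\rho$-variation on $(0,\infty)$, while $S_V$ corresponds to the $\ell^2$-space of $\rho$-variation norms over the dyadic blocks, $\mathcal{O}$ to the analogous $\ell^2$-valued space, and the $\lambda$-jump operator is controlled uniformly in $\lambda$ by $V_\rho$ and $S_V$ through \eqref{eq1.1} and the Jones--Seeger--Wright scheme (\cite{JSW}, see also \cite{JW}); the same reduction applies with $P_t^{\mathcal{A}}$ in place of $T_t^{\mathcal{A}}$. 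So it is enough to prove the $L^p$-boundedness ($1<p<\infty$) and the $L^1(\mathbb{R}^n,\gamma_{-1})\to L^{1,\infty}(\mathbb{R}^n,\gamma_{-1})$ boundedness of $\|\mathcal{G}\cdot\|_E$, and of its Poisson analogue, for these $E$. For $1<p<\infty$ I would simply invoke the abstract variational, oscillation, short-variation and jump inequalities available for arbitrary symmetric diffusion semigroups (\cite{BCT}, \cite{HMMT}, \cite{LeX}, \cite{JW}, \cite{JSW}), the fractional derivative $t^\alpha\partial_t^\alpha$ being incorporated by standard subordination and functional-calculus arguments for $\mathcal{A}$; since the subordinated Poisson semigroup is again a symmetric diffusion semigroup, this yields (i) and the $L^p$-part of the Poisson statement, leaving only the endpoint estimates.

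For the endpoint I would use a local--global decomposition in the spirit of Salogni (\cite{Sa}) and Bruno--Sj\"ogren (\cite{BrSj}). With $m(x)=\min\{1,1/|x|\}$, split $T_t^{\mathcal{A}}(x,y)=T_t^{loc}(x,y)+T_t^{glob}(x,y)$, where $T_t^{loc}(x,y)=T_t^{\mathcal{A}}(x,y)\chi_{\{|x-y|\le C m(x)\}}$; as all kernels are non-negative, $\|\mathcal{G}g\|_E\le\|\mathcal{G}^{loc}g\|_E+\|\mathcal{G}^{glob}g\|_E$ (and similarly, through the subordination formula, for the Poisson operator), so the two pieces may be handled separately. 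For the local part the key identity is
\[
\frac{|x-e^{-t}y|^2}{1-e^{-2t}}=\frac{e^{-t}|x-y|^2}{1-e^{-2t}}+\frac{|x|^2-e^{-t}|y|^2}{1+e^{-t}},
\]
combined with $1-e^{-2t}\sim\min\{t,1\}$: on $\{|x-y|\le C m(x)\}$ one has $|x|\sim|y|$ and $\big||x|^2-|y|^2\big|$ bounded, so there $T_t^{\mathcal{A}}(x,y)$ agrees, up to factors bounded above and below, with the classical heat kernel $W_{\tau(t)}(x-y)$, $\tau(t)\sim t$ for small $t$, the remaining error being an operator with non-negative kernel that is locally bounded on $L^1(dx)$. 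Since $\gamma_{-1}$ is comparable to a constant times Lebesgue measure on every ball of radius $\sim m(x)$, a Whitney-type covering reduces the weak type $(1,1)$ estimate for $\mathcal{G}^{loc}$ relative to $\gamma_{-1}$ to the known weak type $(1,1)$ estimates, relative to $dx$, for the Euclidean operators built from $\{t^\alpha\partial_t^\alpha W_t\}_{t>0}$ and from the classical Poisson semigroup (\cite{MTX}, \cite{CJRW1}). This disposes of the local parts for both semigroups and for every $\alpha\ge 0$.

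The global part will be the hard part. For $|x-y|>C m(x)$ one must bound the $E$-norm of $t\mapsto t^\alpha\partial_t^\alpha T_t^{\mathcal{A}}(x,y)$: differentiating the explicit kernel, splitting the range of $t$ according to the size of $t$ relative to $1$ and to $m(x)^2$, and using $1-e^{-2t}\sim\min\{t,1\}$ and $e^{-t}\le 1$, the goal is to show that
\[
\mathcal{K}^{glob}(x,y):=\big\| t\mapsto t^\alpha\partial_t^\alpha T_t^{\mathcal{A}}(x,y)\big\|_E\,\chi_{\{|x-y|>C m(x)\}}(x,y)
\]
satisfies $\int_{\mathbb{R}^n}\mathcal{K}^{glob}(x,y)\,e^{|x|^2}\,dx\le C e^{|y|^2}$ together with the symmetric estimate in $y$; this gives boundedness of $\|\mathcal{G}^{glob}\cdot\|_E$ on $L^1(\mathbb{R}^n,\gamma_{-1})$, hence from $L^1(\mathbb{R}^n,\gamma_{-1})$ into $L^{1,\infty}(\mathbb{R}^n,\gamma_{-1})$. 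This is where the restriction $0\le\alpha\le 1$ is forced: the $t$-derivatives of $T_t^{\mathcal{A}}(x,y)$ produce factors which, after taking the $E$-norm in $t$ and integrating against the exponentially growing density $e^{|x|^2}\,dx$ over the global region, stay under control only when $\alpha\le 1$, while for $\alpha>1$ one expects a breakdown analogous to that of higher order Riesz transforms in this setting (\cite{BrSj}). For the Poisson semigroup I would first substitute $P_t^{\mathcal{A}}=\frac{t}{2\sqrt{\pi}}\int_0^\infty \frac{e^{-t^2/4u}}{u^{3/2}}T_u^{\mathcal{A}}\,du$ and differentiate afterwards: the Gaussian factor $e^{-t^2/4u}$ supplies the extra decay that makes the analogous global estimate valid for all $\alpha\ge 0$, which is why the Poisson statement carries no constraint on $\alpha$.

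Finally, combining the $L^p$ bounds with the local and global endpoint estimates, and invoking \cite{JSW} once more for the $\lambda$-uniformity of the jump operator, gives all the assertions of the theorem. The main obstacle, as indicated, is the global estimate: obtaining the sharp pointwise control of the $E$-valued kernel $t\mapsto t^\alpha\partial_t^\alpha T_t^{\mathcal{A}}(x,y)$ and carrying out, in the integration over the global region, the delicate decay interplay between $e^{-|x-e^{-t}y|^2/(1-e^{-2t})}$ and $e^{|x|^2}$, which simultaneously produces the threshold $\alpha\le 1$ in the heat case and is rescued by subordination in the Poisson case.
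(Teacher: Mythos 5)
Your overall architecture (vector-valued reformulation, reduction of the jump operator via \eqref{eq1.1}, local--global splitting with $N_\delta$, comparison with the Euclidean semigroups on the local region) matches the paper's, but two of your key steps would not go through as stated. First, for $1<p<\infty$ you propose to dispatch the strong type by invoking abstract variational inequalities for symmetric diffusion semigroups, "incorporating" $t^\alpha\partial_t^\alpha$ by functional calculus. The results you cite (\cite{LeX}, \cite{JW}, \cite{BCT}, \cite{HMMT}) cover the semigroup and integer-order derivatives $t^k\partial_t^k T_t$; there is no standard abstract statement giving $\rho$-variation, oscillation and short-variation bounds for fractional derivatives $t^\alpha\partial_t^\alpha$ of a general symmetric diffusion semigroup, and the paper precisely avoids this by proving the $L^p$ bounds from the same kernel analysis as the endpoint: on the global region a Schur test against the weights $e^{(|x|^2-|y|^2)/p}$ (which works only for $p>1$), and on the local region the Euclidean comparison.

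Second, and more seriously, your endpoint target for the global part is wrong: you propose to prove $\int_{\mathbb R^n}\mathcal K^{glob}(x,y)\,e^{|x|^2}dx\le Ce^{|y|^2}$, i.e.\ strong $L^1(\gamma_{-1})$-boundedness of the global piece. This cannot hold. The sharp pointwise bound one obtains for the variation kernel in the global region is of the form $e^{|y|^2-|x|^2}\big[(1+|x|)^n\wedge(|x|\sin\theta(x,y))^{-n}\big]$ (plus similar terms), and the corresponding integral $\int (1+|x|)^n\wedge(|x|\sin\theta(x,y))^{-n}\,dx$ diverges; as in the Gaussian setting, the global operators are only of weak type $(1,1)$. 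The paper's route is different: for $0\le\alpha\le1$ it reduces $\int_0^\infty|\partial_t(t^\alpha\partial_t^\alpha T_t^{\mathcal A}(x,y))|dt$ to $\int_0^\infty(|\partial_tT_t^{\mathcal A}(x,y)|+t|\partial_t^2T_t^{\mathcal A}(x,y)|)dt$, controls the first-order term by $\sup_{t>0}T_t^{\mathcal A}(x,y)$ through the sign-change argument for the degree-four polynomial $P_{x,y}(r)$ (as in \cite{GCMST2}), estimates the second-order term by the case analysis of \cite{BrSj}, and then invokes the weak type $(1,1)$ lemmas for operators dominated by exactly these kernels (\cite[Lemmas 3.3.3--3.3.4]{Sa}, \cite[Lemmas 4.2--4.4]{BrSj}); the Poisson case works for all $\alpha\ge0$ because subordination reduces every $t^{m-1}\partial_t^mP_t^{\mathcal A}$ to $\int_0^\infty|\partial_sT_s^{\mathcal A}(x,y)|ds\le C\sup_{s>0}T_s^{\mathcal A}(x,y)$, not merely because of the extra decay of $e^{-t^2/4u}$. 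A smaller but real gap is in your local part: comparability of $T_t^{\mathcal A}(x,y)$ with a Euclidean heat kernel up to bounded factors does not control the $\rho$-variation of the difference; one must actually prove the kernel bound $\int_0^\infty|\partial_t(t^\alpha\partial_t^\alpha(T_t^{\mathcal A}-T_t)(x,y))|dt\le C(1+|x|)^{1/2}|x-y|^{-(n-1/2)}$ on $N_\delta$, which is where the bulk of the paper's computation (via the corrected Teuwen formula) lies.
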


According to \cite[Corollary 4.5 and the comments in p. 2092]{LeX} the operators  $\mathcal O(\{t^k\partial^k_t S_t\}_{t>0},\{t_j\}_{j\in\mathbb N})$, \newline $V_{\rho}(\{t^k\partial^k_t S_t\}_{t>0})$ and $\lambda\Lambda(\{t^k\partial^k_t S_t\}_{t>0},\lambda)^{1/\rho}$ are bounded from $ L^p(\mathbb R^n,\gamma_{-1})$ into itself, for every $1<p<\infty$ and $k\in\mathbb N$, when $S_t=T_t^{\mathcal A}$ and $S_t=P_t^{\mathcal A}$, $t>0$. We prove our results by using a different procedure that allows us to obtain the strong and the weak type results. We consider local and global operators and we prove that the $L^p$-behaviour of the local ones are the same than those corresponding operators associated to the classical Euclidean heat and Poisson semigroups in $\mathbb R^n$. Our procedure also works in a vector valued setting.

Let $X$ be a Banach space. We denote by $ L^p_X(\mathbb R^n,\gamma_{-1})$, $1\leq p<\infty$, the $p$th-Bochner-Lebesgue space. If $t>0$ and $1\leq p<\infty$, since $T_t^{\mathcal A}$ and $P_t^{\mathcal A}$ are positive contractions in $L^p(\mathbb R^n,\gamma_{-1})$, they can be extended from $L^p(\mathbb R^n,\gamma_{-1})\otimes X$ to $L^p_X(\mathbb R^n,\gamma_{-1})$ as bounded contractions in $L^p_X(\mathbb R^n,\gamma_{-1})$. For every $f\in L^p_X(\mathbb R^n,\gamma_{-1})$, $1\leq p<\infty$, and $\rho >2$, the variation operators  $V_{\rho}^X(\{T^{\mathcal A}_t\}_{t>0})$ and $V_{\rho}^X(\{P^{\mathcal A}_t\}_{t>0})$ are defined as above by replacing absolute value by $X$-norms.

We recall that it is said that $X$ has $q$-martingale cotype (see \cite{Pi1}) with $q\geq 2$ when there exists $C>0$ such that for every martingale $(M_n)_{n\in\mathbb N}$ defined on some probability space with values in $X$ the following inequality holds

$$\sum_{n\in\mathbb N}\mathbb E\|M_n-M_{n-1}\|^q\leq C\sup_{n\in\mathbb N}\mathbb E\|M_n\|^q.$$
Here $\mathbb E$ denotes the expectation and $M_{-1}=0$.

\begin{thm}\label{th1.2}
Given $\alpha\geq 0$ and $2\leq q<\rho <\infty$. Suppose that $X$ is a Banach space with $q$-martingale cotype. Then,
\begin{enumerate}
    \item[(a)] The variation operator $V_{\rho}^X(\{t^\alpha\partial^\alpha_t T^{\mathcal A}_t\}_{t>0})$ is bounded from $ L^p_X(\mathbb R^n,\gamma_{-1})$ into itself, for every  $1< p<\infty$. Furthermore, $V_{\rho}^X(\{t^\alpha\partial^\alpha_t T^{\mathcal A}_t\}_{t>0})$ is bounded from $ L^1_X(\mathbb R^n,\gamma_{-1})$ into $ L^{1,\infty}_X(\mathbb R^n,\gamma_{-1})$ provided that $0\leq\alpha\leq 1$.

    \item[(b)] The variation operator $V_{\rho}^X(\{t^\alpha\partial^\alpha_t P^{\mathcal A}_t\}_{t>0})$ is bounded from $ L^p_X(\mathbb R^n,\gamma_{-1})$ into itself, for every  $1< p<\infty$, and  from $ L^1_X(\mathbb R^n,\gamma_{-1})$ into $ L^{1,\infty}_X(\mathbb R^n,\gamma_{-1})$.
\end{enumerate}
\end{thm}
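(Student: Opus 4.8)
The plan is to carry out the vector-valued counterpart of the argument leading to Theorem \ref{th1.1}, so I describe only the points where the value space $X$ intervenes. For $1<p<\infty$ and $\alpha\in\mathbb N$ the $L^p_X$-boundedness asserted in (a) and (b) is already contained in the $L^p_X$-version of \cite[Corollary 4.5]{LeX} applied to the symmetric diffusion semigroups $\{T^{\mathcal A}_t\}_{t>0}$ and $\{P^{\mathcal A}_t\}_{t>0}$ (available precisely because $X$ has $q$-martingale cotype with $q<\rho$); the real content of the theorem is thus the weak type $(1,1)$ and the non-integer values of $\alpha$. The single input about $X$ that I would use is the classical Euclidean vector-valued variational inequality: under the same hypotheses on $X$ and $\rho$, $V^X_\rho(\{t^\alpha\partial^\alpha_tW_t\}_{t>0})$ and $V^X_\rho(\{t^\alpha\partial^\alpha_t\mathcal P_t\}_{t>0})$, where $\{W_t\}_{t>0}$ and $\{\mathcal P_t\}_{t>0}$ denote the heat and Poisson semigroups on $\mathbb R^n$, are bounded on $L^p_X(\mathbb R^n,dx)$ for $1<p<\infty$ and map $L^1_X(\mathbb R^n,dx)$ into $L^{1,\infty}_X(\mathbb R^n,dx)$; for non-integer $\alpha$ this is obtained as in the scalar case, and the weak type $(1,1)$ from Calderón–Zygmund theory for the \emph{scalar} kernels of these operators acting on $X$-valued functions, the needed $L^2_X$-estimate being exactly what $q$-martingale cotype provides. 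I stress that no property of $X$ beyond $q$-martingale cotype enters, and only through this one estimate; every other term is dominated, in $X$-norm, by a positive integral operator.

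Next I would fix an admissible local region, say $N=\{(x,y):\ |x-y|\le(1+|x|)^{-1}\}$, with characteristic function $\chi_N$, and split, for $t>0$,
$$\Phi^{\mathrm{loc}}_t(f)(x)=\int_{\mathbb R^n}t^\alpha\partial^\alpha_tT^{\mathcal A}_t(x,y)\,\chi_N(x,y)\,f(y)\,dy,\qquad \Phi^{\mathrm{glob}}_t=t^\alpha\partial^\alpha_tT^{\mathcal A}_t-\Phi^{\mathrm{loc}}_t,$$
with $\Psi^{\mathrm{loc}}_t,\Psi^{\mathrm{glob}}_t$ defined likewise from $P^{\mathcal A}_t$. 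Since $V^X_\rho$ is subadditive it suffices to treat these four families separately. For the global ones, the elementary bound $V^X_\rho(\{g_t\}_{t>0})\le\int_0^\infty\|\partial_tg_t\|_X\,dt$ (valid because $\rho\ge1$) gives
$$V^X_\rho(\{\Phi^{\mathrm{glob}}_t\}_{t>0})(f)(x)\le\int_{\mathbb R^n}\Big(\int_0^\infty\big|\partial_t\big(t^\alpha\partial^\alpha_tT^{\mathcal A}_t(x,y)\big)\big|(1-\chi_N(x,y))\,dt\Big)\|f(y)\|_X\,dy,$$
and the kernel in parentheses is the one already estimated in the proof of Theorem \ref{th1.1}: it defines a positive operator bounded on $L^p(\mathbb R^n,\gamma_{-1})$ for $1<p<\infty$ and of weak type $(1,1)$ with respect to $\gamma_{-1}$ when $0\le\alpha\le1$ (and for every $\alpha\ge0$ in the Poisson case). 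Since the kernel is nonnegative, the passage from $|f|$ to $\|f\|_X$ costs nothing, which settles the global parts with no hypothesis on $X$ and accounts for the restriction $\alpha\le1$ in (a) and its absence in (b).

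For the local parts I would compare with the Euclidean semigroups. On $N$ the density $e^{|y|^2}$ is comparable to $e^{|x|^2}$, so there $\gamma_{-1}$ and $dx$ are interchangeable. Using the Mehler formula and $1-e^{-2t}\sim t$, $e^{-t}\sim1$ for small $t$, one checks (after a harmless linear reparametrization of $t$) that, once $\partial_t(t^\alpha\partial^\alpha_t\,\cdot\,)$ is applied and integrated over $t\in(0,\infty)$, the difference between the kernel of $\{\Phi^{\mathrm{loc}}_t\}$ and that of the corresponding Euclidean heat family is dominated by a kernel of the form $C\,|x-y|^{-n+\delta}\chi_N(x,y)$ for some $\delta>0$; the off-diagonal Gaussian decay of the Euclidean kernel then also lets one drop the cutoff $\chi_N$ from the main term at the cost of a bounded, weak-$(1,1)$, positive operator. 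Hence $V^X_\rho(\{\Phi^{\mathrm{loc}}_t\})$ agrees with $V^X_\rho(\{t^\alpha\partial^\alpha_tW_t\})$ modulo operators dominated in $X$-norm by positive integral operators that are $L^p(\mathbb R^n,dx)$-bounded and of weak type $(1,1)$; the Euclidean vector-valued inequality of the first paragraph then closes the estimate, and transferring back to $\gamma_{-1}$ gives the bounds claimed for $\{\Phi^{\mathrm{loc}}_t\}$. The Poisson family $\{\Psi^{\mathrm{loc}}_t\}$ is treated identically. Combining the local and global contributions yields (a) and (b).

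The hard part will be the kernel estimate in the local step: producing, uniformly in $t>0$, a genuine gain — the extra factor $|x-y|^{\delta}$, or equivalently Gaussian off-diagonal decay — for the difference of the Mehler and Gauss–Weierstrass kernels \emph{after} the Weyl differentiation $\partial_t(t^\alpha\partial^\alpha_t\,\cdot\,)$; the nonlocal, fractional nature of $\partial^\alpha_t$ makes this computation delicate, and it is in these $\mathcal A$-specific estimates — for $\{\Phi^{\mathrm{glob}}_t\}$ and for the local error term — that the threshold $\alpha\le1$ for the weak type $(1,1)$ in the heat case comes from. Making the Euclidean vector-valued inequality available for non-integer $\alpha$ is the only other point requiring attention, and by now it is routine.
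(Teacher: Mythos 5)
Your proposal is correct and follows essentially the same route as the paper: the vector-valued Euclidean variational inequality supplied by the martingale-cotype hypothesis (the paper gets it from \cite{HM2} and then reproduces the argument of \cite[Lemma 2.4]{CJRW1} for the fractional-derivative kernels, which is exactly your ``$L^2_X$ plus Calder\'on--Zygmund'' step), combined with the local/global splitting in which the global kernel and the local difference kernel already estimated in the proof of Theorem \ref{th1.1} dominate the $X$-norm of the variation by positive integral operators, so no property of $X$ enters there. The restriction $0\leq\alpha\leq 1$ indeed comes only from the global heat kernel, as you note.
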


Let $i=1,...,n$. The $i$-th Riesz transform $\mathcal R_i^{\mathcal A}$ associated with the operator $\mathcal A$ is defined, for every $f\in L^p(\mathbb R^n,\gamma_{-1})$, $1\leq p<\infty$,

$$\mathcal R_i^{\mathcal A}(f)(x)=\lim_{\epsilon\rightarrow 0^+}\int_{|x-y|>\epsilon}\mathcal R_i^{\mathcal A}(x,y)f(y)dy,\;\;\mbox{for almost all }x\in\mathbb R^n,$$
where

$$\mathcal R_i^{\mathcal A}(x,y)=\frac{1}{\sqrt{\pi}}\int_0^\infty \partial_{x_i}T_t^{\mathcal A}(x,y)\frac{dt}{\sqrt{t}}, \;\;\;x,y\in\mathbb R^n,\;x\neq y.$$
In \cite{Br1} and \cite{BrSj} it was proved that $\mathcal R_i^{\mathcal A}$ defines a bounded operator from  $L^p(\mathbb R^n,\gamma_{-1})$ into itself, for every $1<p<\infty$, and from $L^1(\mathbb R^n,\gamma_{-1})$ into $L^{1,\infty}(\mathbb R^n,\gamma_{-1})$. We define, for every $\epsilon >0$,

$$\mathcal R_{i,\epsilon}^{\mathcal A}(f)(x)=\int_{|x-y|>\epsilon}\mathcal R_i^{\mathcal A}(x,y)f(y)dy,\;\;\;\;x\in\mathbb R^n,$$

\begin{thm}\label{th1.3}
Let $i=1,...,n$, $\rho >2$, $\lambda >0$, and $\{t_j\}_{j\in\mathbb N}$ be a decreasing sequence in $(0,\infty)$ that converges to $0$. The operators $\mathcal O(\{\mathcal R_{i,\epsilon}^{\mathcal A}\}\}_{\epsilon>0},\{t_j\}_{j\in\mathbb N})$, $V_{\rho}(\{\mathcal R_{i,\epsilon}^{\mathcal A}\}_{\epsilon>0},)$, $S_V(\{\mathcal R_{i,\epsilon}^{\mathcal A}\}_{\epsilon>0},)$ and $\lambda\Lambda(\{\mathcal R_{i,\epsilon}^{\mathcal A}\}_{\epsilon>0},,\lambda)^{1/\rho}$ are bounded from $L^p(\mathbb R^n,\gamma_{-1})$ into itself, for every $1<p<\infty$, and from $L^1(\mathbb R^n,\gamma_{-1})$ into $L^{1,\infty}(\mathbb R^n,\gamma_{-1})$. Furthermore the $L^p$-boundedeness properties of the $\lambda$-jump operator hold uniformly in $\lambda$.
\end{thm}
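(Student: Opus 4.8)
The argument follows the local/global pattern announced above for the semigroup operators. Let $m(x)=(1+|x|)^{-1}$ and fix a constant $\beta>0$; set $N=\{(x,y)\in\mathbb R^n\times\mathbb R^n:\ |x-y|\le\beta\,m(x)\}$ and decompose $\mathcal R_{i,\epsilon}^{\mathcal A}(f)=\mathcal R_{i,\epsilon}^{\mathcal A,\mathrm{loc}}(f)+\mathcal R_{i,\epsilon}^{\mathcal A,\mathrm{glob}}(f)$, where $\mathcal R_{i,\epsilon}^{\mathcal A,\mathrm{loc}}$ and $\mathcal R_{i,\epsilon}^{\mathcal A,\mathrm{glob}}$ have kernels $\mathcal R_i^{\mathcal A}(x,y)\chi_N(x,y)\chi_{\{|x-y|>\epsilon\}}$ and $\mathcal R_i^{\mathcal A}(x,y)(1-\chi_N(x,y))\chi_{\{|x-y|>\epsilon\}}$, respectively. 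By \eqref{eq1.1} the $\lambda$-jump operator is dominated by $V_\rho$ with a constant independent of $\lambda$, so it suffices to bound $\mathcal O(\cdot,\{t_j\}_{j\in\mathbb N})$, $V_\rho$ and $S_V$, and since each of these is subadditive in the underlying family it is enough to treat separately the families $\{\mathcal R_{i,\epsilon}^{\mathcal A,\mathrm{loc}}\}_{\epsilon>0}$ and $\{\mathcal R_{i,\epsilon}^{\mathcal A,\mathrm{glob}}\}_{\epsilon>0}$.

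For the global part I would use that, when $(x,y)\notin N$, the Mehler kernel obeys an off-diagonal bound $\int_0^\infty|\partial_{x_i}T_t^{\mathcal A}(x,y)|\,\frac{dt}{\sqrt t}\le C\,\Phi(x,y)$ for a nonnegative kernel $\Phi$ for which $Gg(x):=\int_{\mathbb R^n}\Phi(x,y)(1-\chi_N(x,y))g(y)\,dy$ is bounded on $L^p(\mathbb R^n,\gamma_{-1})$ for every $1\le p<\infty$ (this is in substance the estimate used to handle the global part of $\mathcal R_i^{\mathcal A}$ itself in \cite{Br1}, \cite{BrSj}, \cite{B1}). For $0<\epsilon_1<\epsilon_2$ the increment $\mathcal R_{i,\epsilon_1}^{\mathcal A,\mathrm{glob}}(f)-\mathcal R_{i,\epsilon_2}^{\mathcal A,\mathrm{glob}}(f)$ is the integral of $\mathcal R_i^{\mathcal A}(x,y)(1-\chi_N(x,y))f(y)$ over the annulus $\{\epsilon_1<|x-y|\le\epsilon_2\}$; as the annuli entering each of $\mathcal O$, $V_\rho$, $S_V$ are pairwise disjoint and the kernel is absolutely integrable against $|f|$, all three operators applied to $\{\mathcal R_{i,\epsilon}^{\mathcal A,\mathrm{glob}}\}_{\epsilon>0}$ are pointwise $\le 2\,G(|f|)$. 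This settles the global part on $L^p(\mathbb R^n,\gamma_{-1})$, $1<p<\infty$, and with a strong $L^1$ bound.

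For the local part I would fix a covering $\{B_k\}_k$ of $\mathbb R^n$ by admissible balls (centre $x_k$, radius comparable to $m(x_k)$) with bounded overlap, together with a subordinate partition of unity $\{\psi_k\}_k$. Since $\mathcal R_i^{\mathcal A}(x,y)\chi_N(x,y)$ vanishes unless $|x-y|\lesssim m(x)\sim m(y)$, for $x$ in a fixed dilate $\widehat B_k$ only $O(1)$ of the terms $\mathcal R_{i,\epsilon}^{\mathcal A,\mathrm{loc}}(f\psi_{k'})$ do not vanish, so the problem reduces to a bound, uniform in $k$, for $\mathcal O$, $V_\rho$ and $S_V$ of $\{\mathcal R_{i,\epsilon}^{\mathcal A,\mathrm{loc}}(g)\}_{\epsilon>0}$ on $\widehat B_k$ with $g$ supported in $B_k$. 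On $\widehat B_k$ the kernel $\mathcal R_i^{\mathcal A}(x,y)=\frac1{\sqrt\pi}\int_0^\infty\partial_{x_i}T_t^{\mathcal A}(x,y)\,\frac{dt}{\sqrt t}$ is a standard Calder\'on--Zygmund kernel with size and smoothness constants independent of $k$, and the associated principal-value operator (with kernel $\mathcal R_i^{\mathcal A}(x,y)\chi_N(x,y)$) is bounded on $L^2(\widehat B_k,dx)$ uniformly in $k$ — the first from the explicit expression of $T_t^{\mathcal A}(x,y)$, the second by transference from the $L^2(\mathbb R^n,\gamma_{-1})$-boundedness of $\mathcal R_i^{\mathcal A}$ together with the fact that $\gamma_{-1}$ is comparable to a constant multiple of Lebesgue measure on $\widehat B_k$. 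One may then invoke the known $L^p$- and weak-$(1,1)$ oscillation, $\rho$-variation and short-variation inequalities for truncations of Calder\'on--Zygmund operators (\cite{CJRW1}, \cite{CJRW2}), apply them on each $\widehat B_k$, pass from $dx$ back to $\gamma_{-1}$, and sum over $k$ using the bounded overlap (summing the level-set estimates when $p=1$).

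The crux is the local part, and specifically the two facts just used: that $\mathcal R_i^{\mathcal A}$ restricted to an admissible ball is a Calder\'on--Zygmund kernel with constants uniform over all such balls — which calls for a careful analysis of $\int_0^\infty\partial_{x_i}T_t^{\mathcal A}(x,y)\,\frac{dt}{\sqrt t}$, splitting the $t$-integral at $t\sim|x-y|^2$ and at $t\sim 1$ and controlling the non-Euclidean corrections of the Mehler kernel — and the bookkeeping needed to patch the ball-wise variational estimates, in particular keeping the weak $(1,1)$ constants under control in the passage to $L^{1,\infty}(\mathbb R^n,\gamma_{-1})$. By contrast, the global estimates should be a routine consequence of the Gaussian-type off-diagonal decay already exploited in Theorems~\ref{th1.1} and~\ref{th1.2}.
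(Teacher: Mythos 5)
Your global argument is essentially the paper's: by disjointness of the annuli all three operators applied to $\{\mathcal R_{i,\epsilon}^{\mathcal A,\mathrm{glob}}\}_{\epsilon>0}$ are dominated by the positive operator with kernel $|\mathcal R_i^{\mathcal A}(x,y)|$ restricted to the global region, and the kernel estimates of \cite{B1} and \cite{BrSj} finish that case. One caveat: what those references give for $p=1$ is that the dominating positive operator maps $L^1(\mathbb R^n,\gamma_{-1})$ into $L^{1,\infty}(\mathbb R^n,\gamma_{-1})$, not into $L^1$; your claim of a ``strong $L^1$ bound'' for $G$ is an overstatement, though the weak type $(1,1)$ bound is all the theorem needs, so this is easily repaired.

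The local part, however, contains a genuine gap. After localizing to admissible balls you invoke ``the known $L^p$- and weak-$(1,1)$ oscillation, $\rho$-variation and short-variation inequalities for truncations of Calder\'on--Zygmund operators (\cite{CJRW1}, \cite{CJRW2})''. No such general theorem is available: \cite{CJRW1} treats the Hilbert transform and \cite{CJRW2} (and \cite{MTX}) treat convolution-type singular integrals with suitable homogeneous kernels. For a non-convolution kernel such as $\mathcal R_i^{\mathcal A}(x,y)\chi_N(x,y)$, standard size and smoothness estimates together with uniform $L^2$-boundedness of the principal-value operator do \emph{not} imply boundedness of the variation or oscillation of its truncations; the $L^2$ (or some $L^{p_0}$) bound for the variation operator itself is the crucial input of the Calder\'on--Zygmund scheme in those papers, and it does not follow from CZ kernel bounds alone. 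This is precisely the difficulty the paper circumvents: in its proof the local operator is compared with the truncations of the classical Euclidean Riesz transform $\mathcal R_i$, for which \cite{CJRW2} and \cite{MTX} do provide the variational inequalities, and the difference is controlled pointwise by the positive kernel bound $|\mathcal R_i^{\mathcal A}(x,y)-\mathcal R_i(x,y)|\leq C(1+|x|)^{1/2}|x-y|^{-(n-1/2)}$ on $N_\delta$ (from \cite{B1}), whose associated integral operator is bounded on $L^p(\mathbb R^n,\gamma_{-1})$ for all $1\leq p<\infty$; the Euclidean bounds are then transferred to the local region and to the measure $\gamma_{-1}$ by the arguments of \cite[Propositions 3.2.5 and 3.2.7]{Sa} and \cite[p.~10]{HTV}. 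To make your local step sound you would either have to prove a variational inequality for the localized operator directly (which is the whole difficulty), or insert this comparison with the Euclidean Riesz transform, at which point your ball-covering bookkeeping becomes unnecessary.
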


Muckenhoupt (\cite{Mu}) introduced Cauchy-Riemann equations in the Gaussian context (see also \cite{AFS}). We can write when $n=1$

$$\mathcal A f=-\frac12\frac{d}{dx}\left(e^{-x^2}\frac{d}{dx}\left(e^{x^2}f\right)\right)+f,\;\;\;\;\mbox{in}\;\mathbb R^n.$$
This suggests to introduce in the $\mathcal A$-context the following Cauchy-Riemann equations

$$\left.\begin{array}{c}
\frac{\partial}{\partial x}u=-\frac{\partial v}{\partial t} \\
 \\
\frac{\partial}{\partial t}u=\frac12e^{-x^2}\frac{d}{dx}(e^{x^2}v)
\end{array}\right\}$$
and to say that $v$ is the conjugated of $u$ when the above equations hold. Motivated by this, we define the $\mathcal A$-conjugation operators as follows. For every $i=1,...,n$,

$$\mathcal C_{i,t}^{\mathcal A}(f)(x)=\int_t^\infty \partial_{x_i}P_s^{\mathcal A}(f)(x)ds,\;\;\;\;x\in\mathbb R^n\;\mbox{and}\;t>0.$$
We have that. for every $i=1,...,n$,
$$\frac{\partial}{\partial t}\mathcal C_{i,t}^{\mathcal A}(f)(x)=-\partial_{x_i}P_t^{\mathcal A}(f)(x)$$
and
\begin{align*}
\frac12\sum_{i=1}^n e^{-x_i^2}\partial_{x_i}e^{x_i^2}\mathcal C_{i,t}^{\mathcal A}(f)(x) & =\int_t^\infty \frac12\sum_{i=1}^n e^{-x_i^2}\partial_{x_i}e^{x_i^2}\partial_{x_i}P_s^{\mathcal A}(f)(x)ds \\
& =\int_t^\infty (-\mathcal A +n)P_s^{\mathcal A}(f)(x)ds \\
& =\frac{d}{dt}P_t^{\mathcal A}(f)(x)+n\int_t^\infty P_s^{\mathcal A}(f)(x)ds.
\end{align*}

\begin{thm}\label{th1.4}
 Let $i=1,...,n$, $\rho >0$, $\lambda >0$,  and $\{t_j\}_{j\in\mathbb N}$ be a decreasing sequence in $(0,\infty)$ that converges to $0$. The operators $\mathcal O(\{\mathcal C_{i,t}^{\mathcal A}\}_{t>0},\{t_j\}_{j\in\mathbb N})$, $V_{\rho}(\{\mathcal C_{i,t}^{\mathcal A}\}_{t>0})$, $S_V(\{\mathcal C_{i,t}^{\mathcal A}\}_{t>0})$ and $\lambda\Lambda(\{\mathcal C_{i,t}^{\mathcal A}\}_{t>0},\lambda)^{1/\rho}$ are bounded from $L^p(\mathbb R^n,\gamma_{-1})$ into itself, for every $1<p<\infty$. Furthermore the $L^p$-boundedeness properties of the $\lambda$-jump operator hold uniformly in $\lambda$.
 \end{thm}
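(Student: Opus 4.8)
\emph{Plan of proof.} I would argue as in the proofs of Theorems~\ref{th1.1} and \ref{th1.3}: a local/global splitting reduces $\{\mathcal C_{i,t}^{\mathcal A}\}_{t>0}$ to a classical Euclidean conjugate Poisson family plus remainders with harmless kernels. To begin with, inserting the subordination formula for $P_s^{\mathcal A}$ into the definition of $\mathcal C_{i,t}^{\mathcal A}$ and interchanging the integrals (legitimate by the Gaussian decay of $T_u^{\mathcal A}(x,y)$) yields the useful representation
$$\mathcal C_{i,t}^{\mathcal A}f(x)=\frac{1}{\sqrt\pi}\int_0^\infty\frac{e^{-t^2/(4u)}}{\sqrt u}\,\partial_{x_i}T_u^{\mathcal A}f(x)\,du,\qquad t>0 ;$$
in particular $\mathcal C_{i,t}^{\mathcal A}f\to\mathcal R_i^{\mathcal A}f$ as $t\to 0^+$ (since $\int_0^\infty P_s^{\mathcal A}\,ds=\mathcal A^{-1/2}$ and $\mathcal R_i^{\mathcal A}=\partial_{x_i}\mathcal A^{-1/2}$), while, as recorded in the text, $t\mapsto\mathcal C_{i,t}^{\mathcal A}f(x)$ is of class $C^1$ with $\partial_t\mathcal C_{i,t}^{\mathcal A}f(x)=-\partial_{x_i}P_t^{\mathcal A}f(x)$, so that $\mathcal C_{i,t}^{\mathcal A}f(x)-\mathcal C_{i,s}^{\mathcal A}f(x)=-\int_s^t\partial_{x_i}P_\tau^{\mathcal A}f(x)\,d\tau$. (Equivalently, on the Hermite expansion, $\mathcal C_{i,t}^{\mathcal A}=e^{-t\sqrt{\mathcal A-I}}(\mathcal A-I)^{-1/2}\partial_{x_i}$, which one may also exploit.) As $\mathcal O$, $V_\rho$, $S_V$ and — through \eqref{eq1.1} — $\lambda\Lambda^{1/\rho}$ are sublinear in the family, it is enough to estimate each of them for a local and a global piece of $\mathcal C_{i,t}^{\mathcal A}$ separately.

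Fix $c>0$ and the admissible local region $N_c=\{(x,y)\in\mathbb R^n\times\mathbb R^n:|x-y|<c\min(1,|x|^{-1})\}$ of the inverse Gaussian setting (\cite{BrSj,B1}), and split the kernel of $\mathcal C_{i,t}^{\mathcal A}$ along $N_c$ and $N_c^c$, writing $\mathcal C_{i,t}^{\mathcal A}=\mathcal C_{i,t}^{\mathcal A,\mathrm{loc}}+\mathcal C_{i,t}^{\mathcal A,\mathrm{glob}}$. For the global piece the $t$-derivative kernel $-\mathbf 1_{N_c^c}(x,y)\,\partial_{x_i}P_t^{\mathcal A}(x,y)$ is nonsingular, so each of the four operators evaluated on $\{\mathcal C_{i,t}^{\mathcal A,\mathrm{glob}}\}$ is bounded pointwise by
$$\int_{N_c^c(x)}\Big(\int_0^\infty|\partial_{x_i}P_t^{\mathcal A}(x,y)|\,dt\Big)|f(y)|\,dy .$$
Using $\partial_{x_i}T_u^{\mathcal A}(x,y)=-2(x_i-e^{-u}y_i)(1-e^{-2u})^{-1}T_u^{\mathcal A}(x,y)$ and carrying out the $t$-integral, $\int_0^\infty|\partial_{x_i}P_t^{\mathcal A}(x,y)|\,dt\le\pi^{-1/2}\int_0^\infty u^{-1/2}|\partial_{x_i}T_u^{\mathcal A}(x,y)|\,du$, and the resulting positive kernel, restricted to $N_c^c$, is exactly the one handled in the off-diagonal analysis of $\mathcal R_i^{\mathcal A}$; by the estimates of \cite{BrSj,B1} it defines a bounded operator on $L^p(\mathbb R^n,\gamma_{-1})$, $1<p<\infty$.

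For the local piece, on $N_c$ the Gaussian factors $e^{\pm|x|^2},e^{\pm|y|^2}$ are mutually comparable and $T_u^{\mathcal A}(x,y)$ coincides, up to lower-order terms, with the Gauss--Weierstrass kernel of $\mathcal L=-\tfrac12\Delta$; I would therefore compare $\mathcal C_{i,t}^{\mathcal A,\mathrm{loc}}$ with a localization of the classical conjugate Poisson operator $Q_{i,t}^{\mathcal L}g(x)=\int_t^\infty\partial_{x_i}P_s^{\mathcal L}g(x)\,ds=P_t^{\mathcal L}\mathcal R_i^{\mathcal L}g(x)$, where $\mathcal R_i^{\mathcal L}=\mathcal L^{-1/2}\partial_{x_i}$ is a Euclidean Riesz transform. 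The $\mathcal O$-, $V_\rho$-, $S_V$- and jump-inequalities for $\{Q_{i,t}^{\mathcal L}\}$ on $L^p(\mathbb R^n,dx)$ follow from the classical ones for the Poisson semigroup $\{P_t^{\mathcal L}\}$ (\cite{JW,LeX}) together with the $L^p$-boundedness of $\mathcal R_i^{\mathcal L}$, and transfer to $L^p(\mathbb R^n,\gamma_{-1})$ by the local equivalence of the two measures. The remainder $E_{i,t}=\mathcal C_{i,t}^{\mathcal A,\mathrm{loc}}-(\text{localized }Q_{i,t}^{\mathcal L})$ is arranged so that its kernel sheds the non-integrable diagonal singularity; one then checks that $E_{i,t}$ is $C^1$ in $t$ and that $\int_0^\infty|\partial_tE_{i,t}(x,y)|\,dt$ is dominated by a positive kernel supported near the diagonal and integrable in each variable against $\gamma_{-1}$, so that its $1$-variation, and hence its contribution to $\mathcal O$, $V_\rho$, $S_V$ and $\lambda\Lambda^{1/\rho}$, is bounded on $L^p(\mathbb R^n,\gamma_{-1})$, $1<p<\infty$.

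The principal difficulty is this last step: selecting the Euclidean comparison operator so that the remainder truly loses the diagonal singularity, and then estimating, uniformly in $t$, both the size and the $t$-derivative of that remainder, as well as the global kernel over the entire range $u\in(0,\infty)$ — where for $(x,y)\in N_c^c$ the inverse Gaussian weight $e^{|x|^2}$ has to be absorbed by the Gaussian factors of $T_u^{\mathcal A}(x,y)$. These are precisely the kernel estimates underlying the boundedness of $\mathcal R_i^{\mathcal A}$ in \cite{BrSj,B1}, now carrying in addition the subordination integral and one $t$-differentiation. They also make it natural that only $L^p$-bounds, $1<p<\infty$, are asserted: the extra integration $\int_t^\infty$ present in $\mathcal C_{i,t}^{\mathcal A}$ removes the Calder\'on--Zygmund structure that produced the weak type $(1,1)$ estimate for $\mathcal R_{i,\epsilon}^{\mathcal A}$ in Theorem~\ref{th1.3}.
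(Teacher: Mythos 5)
Your plan is correct and follows essentially the same route as the paper: the subordinated representation $\mathcal C_{i,t}^{\mathcal A}(f)=\pi^{-1/2}\int_0^\infty u^{-1/2}e^{-t^2/4u}\partial_{x_i}T_u^{\mathcal A}(f)\,du$, the splitting $V_{\rho}\le V_{\rho,loc}(\{\mathcal C_{i,t}^{\mathcal A}-Q_{i,t}\})+V_{\rho,loc}(\{Q_{i,t}\})+V_{\rho,glob}(\{\mathcal C_{i,t}^{\mathcal A}\})$ with $Q_{i,t}=P_t\mathcal R_i$, the global $1$-variation kernel dominated by $\int_0^\infty u^{-1/2}|\partial_{x_i}T_u^{\mathcal A}(x,y)|\,du$ and treated as the Riesz-transform global kernel, and the local comparison with the Euclidean conjugate Poisson family transferred via the local equivalence of the measures. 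The kernel estimates you defer are exactly the ones the paper carries out (its bounds (\ref{2.29}) on $N_\delta^c$ and $H(x,y)\le C(1+|x|)^{1/2}|x-y|^{-(n-1/2)}$ on $N_\delta$, together with the bound $V_\rho(\{Q_{i,t}(x,y)\})\le C|x-y|^{-n}$ needed for the localization argument of \cite[Propositions 3.2.5 and 3.2.7]{Sa}).
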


 In order to prove this theorem we need to establish the corresponding result when we consider the classical Euclidean conjugation operators in $\mathbb R^n$. (see subsection 3.3).

 Let $X$ be a Banach space. Suppose that $\{M_n\}_{n=1}^k$ is a $X$-valued martingale. The sequence $\{d_n=M_n-M_{n-1}\}_{n=1}^k$, where we consider $M_0=0$, is named the martingale difference sequence associated with $\{M_n\}_{n=1}^k$. Let $1<p<\infty$. We say that $X$ has the UMD$_p$-property when there exists $C>0$ such that for every $L^p$-martingale difference $\{d_n\}_{n=1}^k$ we have

 $$\mathbb E\left\|\sum_{n=1}^k  \epsilon_nd_n\right\|^p\leq C\mathbb E\left\|\sum_{n=1}^k d_n\right\|^p.$$
 UMD is an abbreviation of unconditional martingale difference. $X$ is UMD$_p$ for some $1<p<\infty$, if and only if, $X$ is UMD$_p$ for every $1<p<\infty$. Then, we write UMD without any reference to $p$.

 Burkholder (\cite{Bu}) and Bourgain (\cite{Bou2}) characterized the Banach spaces $X$ with the UMD-property as those ones for which the Hilbert transform can be extended to $L^p(\mathbb R, dx)\otimes X$ as a bounded operator from $L^p_X(\mathbb R, dx)$ into itself, for every $1<p<\infty$. In \cite[Theorem 1.2]{B1} it was used the Riesz transforms $\mathcal R_i^{\mathcal A}$, $i=1,...,n$ to characterize the Banach spaces with the UMD property.

 UMD Banach spaces share many of the good properties of Hilbert spaces. This class of Banach spaces, as the experience has shown, can be seen as the right one for developing vector valued harmonic analysis.

 $L^p$-boundedness properties of the $\rho$-variation operator associated with the $\mathcal A$-conjugation in a Banach-valued setting is connected with UMD-property and the martingale cotype of $X$.

 \begin{thm}\label{th1.5}
 Let $X$ be a Banach space and $1<p<\infty$.
 \begin{enumerate}
     \item[(i)] Assume that $2\leq q_0<\rho<\infty$ and that $X$ is of martingale cotype $q_0$ and satisfies the UMD-property. Then, for every $i=1,...,n$,
     $$\left\|V_{\rho}(\{\mathcal C_{i,t}^{\mathcal A}\}_{t>0})(f)\right\|_{L^p(\mathbb R^n,\gamma_{-1})}\leq C\|f\|_{L^p_X(\mathbb R^n,\gamma_{-1})},\;\;\;f\in L^p_X(\mathbb R^n,\gamma_{-1}).$$

     \item[(ii)] Let $2\leq\rho<\infty$. If, for very $i=1,...,n$,
     $$\left\|V_{\rho}(\{\mathcal C_{i,t}^{\mathcal A}\}_{t>0})(f)\right\|_{L^p(\mathbb R^n,\gamma_{-1})}\leq C\|f\|_{L^p_X(\mathbb R^n,\gamma_{-1})},\;\;\;f\in L^p_X(\mathbb R^n,\gamma_{-1}),$$
     then $X$ is of martingale cotype $\rho$ and has the UMD-property.
 \end{enumerate}
 \end{thm}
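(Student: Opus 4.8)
The plan is to reduce the variational inequality for the conjugation operators to known results via a "transference to the classical case" of the sort already announced for Theorem \ref{th1.4}, combined with the abstract characterization of martingale cotype and UMD in terms of variational and Hilbert-transform bounds. For part (i), I would first recall the identity $\partial_t \mathcal C_{i,t}^{\mathcal A}(f)(x)=-\partial_{x_i}P_t^{\mathcal A}(f)(x)$, which exhibits $\{\mathcal C_{i,t}^{\mathcal A}\}_{t>0}$ as the primitive (in $t$) of the conjugate Poisson-type kernel; hence bounding $V_{\rho}(\{\mathcal C_{i,t}^{\mathcal A}\}_{t>0})$ amounts to an estimate on oscillatory sums of increments $\int_{s_{j+1}}^{s_j}\partial_{x_i}P_s^{\mathcal A}(f)\,ds$. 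The standard route (as in \cite{LeX,MTX}) is to split $\{P_s^{\mathcal A}\}$ into a local part, whose kernel is comparable to the classical Euclidean Poisson kernel restricted to a region $|x-y|\lesssim (1+|x|)^{-1}$, and a global part, which is handled by size estimates on the kernel and is in fact bounded on $L^p_X$ for the full range with no cotype hypothesis. For the local part one transplants the Banach-valued $\rho$-variation inequality for the classical conjugate function / Riesz transform truncations in $\mathbb R^n$, which holds precisely when $X$ is UMD and of martingale cotype $q_0<\rho$ — this is the content of the auxiliary result referenced in subsection 3.3 together with \cite[Corollary 4.5]{LeX} applied to the Euclidean Poisson semigroup and the characterization of UMD via the vector-valued Hilbert/Riesz transform (\cite{Bu,Bou2,B1}).

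For part (ii), the implication runs in the converse direction: from the scalar $L^p(\mathbb R^n,\gamma_{-1})$-bound on $V_{\rho}(\{\mathcal C_{i,t}^{\mathcal A}\}_{t>0})$ acting on $L^p_X$-functions I must extract both the martingale cotype $\rho$ and the UMD property of $X$. For the martingale cotype, I would use that the $\rho$-variation of the (conjugate) Poisson semigroup dominates, after the usual subordination and localization, the $\rho$-variation of the Euclidean Poisson semigroup on a fixed ball, and then invoke the known equivalence (Le Merdy–Xu type results, \cite{LeX} and \cite{JW}) between such a variational bound for a diffusion semigroup and the martingale cotype of the target space. For UMD, I would use the relation established just before the theorem, namely that $\tfrac12\sum_i e^{-x_i^2}\partial_{x_i}e^{x_i^2}\mathcal C_{i,t}^{\mathcal A}(f)=\tfrac{d}{dt}P_t^{\mathcal A}(f)+n\int_t^\infty P_s^{\mathcal A}(f)\,ds$, which ties the conjugation operators to the Riesz transforms $\mathcal R_i^{\mathcal A}$; letting $t\to0^+$ inside the variation and using \eqref{eq1.1} to pass from variation to a single jump, the hypothesis yields $L^p_X$-boundedness of $\mathcal R_i^{\mathcal A}$, whence $X$ is UMD by \cite[Theorem 1.2]{B1}.

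The main obstacle I anticipate is the local/global decomposition in the vector-valued setting: one must show that the global part of $\{\partial_{x_i}P_s^{\mathcal A}\}$ contributes an operator bounded on $L^p_X(\mathbb R^n,\gamma_{-1})$ \emph{uniformly in the variational supremum} — i.e. that the naive size estimate $\sum_j\bigl\|\int_{s_{j+1}}^{s_j}\partial_{x_i}P_s^{\mathcal A,\mathrm{glob}}(x,y)\,ds\bigr\|$ can be summed against a single kernel bound independent of the partition — and that the local part genuinely matches the classical conjugate operator up to an error controlled by a (vector-valued) maximal or fractional-integral operator on the inverse Gaussian space. Establishing the requisite pointwise kernel bounds for $\partial_{x_i}P_s^{\mathcal A}(x,y)$, splitting at the critical scale $(1+|x|)^{-1}$, and checking that the error terms fall under operators already known to be $L^p_X$-bounded (independently of martingale cotype) is the technical heart; once that is in place, parts (i) and (ii) follow by citing the Euclidean vector-valued variational inequalities and the UMD/cotype characterizations.
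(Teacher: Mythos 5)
Your outline for part (i) is essentially the paper's argument: split $\mathcal C_{i,t}^{\mathcal A}$ into local and global pieces, control the global kernel and the local difference $\mathcal C_{i,t}^{\mathcal A}-Q_{i,t}$ by positive operators bounded on $L^p(\mathbb R^n,\gamma_{-1})$ (exactly as in the proof of Theorem \ref{th1.4}), and then invoke the auxiliary Euclidean result (Proposition \ref{Prop3.1}(i)) for $V_\rho^X(\{Q_{i,t}\}_{t>0})$. That direction is fine.

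Part (ii) has genuine gaps. First, your claim that the $\rho$-variation of the conjugate family ``dominates, after subordination and localization, the $\rho$-variation of the Euclidean Poisson semigroup'' is unjustified: the kernels of $Q_{i,t}$ (and $\mathcal C_{i,t}^{\mathcal A}$) are odd, and there is no pointwise or kernel-level domination of $P_t$ by them. The only known way to recover the Poisson semigroup is the identity $P_t(f)=\frac1n\sum_{i}Q_{i,t}(\mathcal R_i f)$, which is legitimate on $L^p_X$ only after one knows that the vector-valued Riesz transforms are bounded, i.e.\ after UMD has already been extracted; so the logical order in your sketch (cotype first, via domination) cannot work, and the Cauchy--Riemann relation you quote for the UMD step is not the relevant fact either --- what is needed is the identification $\lim_{t\to0^+}\mathcal C_{i,t}^{\mathcal A}f=\mathcal R_i^{\mathcal A}f$ (or, on the Euclidean side, $\lim_{t\to0^+}Q_{i,t}f=\mathcal R_if$) together with a bound for a single $\mathcal C_{i,t_0}^{\mathcal A}$ to convert the variation estimate into boundedness of the limit operator. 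Second, the ``known equivalence'' you invoke between an $L^p_X(\mathbb R^n,dx)$ variational bound for the Poisson semigroup and $\rho$-martingale cotype is not available in \cite{LeX} or \cite{JW}: \cite{LeX} gives the implication cotype $\Rightarrow$ variation, and \cite{JW} is scalar. This converse is precisely what the paper must prove, by transferring the $\mathbb R^n$ bound to the torus $\mathbb T^n$ (a Fourier-multiplier transference in the $F_{\rho,a}^X$-valued sense, following \cite{BV}), reducing the dimension to $\mathbb T^1$ by a Jodeit-type extension/restriction argument with Jensen's inequality, and only then quoting \cite{HLM} for the one-dimensional torus Poisson semigroup; none of this is a citation-level step. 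Finally, even the passage from the hypothesis on $L^p(\mathbb R^n,\gamma_{-1})$ to a usable Euclidean statement needs more than localization: the paper first bounds $V^X_{\rho,loc}(\{\mathcal C_{i,t}^{\mathcal A}\}_{t>0})$ and $V^X_{\rho,loc}(\{\mathcal C_{i,t}^{\mathcal A}-Q_{i,t}\}_{t>0})$ with Lebesgue measure via \cite[Propositions 3.2.5 and 3.2.7]{Sa}, and then removes the localization through the dilation identity $Q_{i,t}(f_\lambda)(x/\lambda)=Q_{i,\lambda t}(f)(x)$; a bound ``on a fixed ball'' alone does not feed into any cotype or UMD characterization.
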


 From the last theorem we deduce that if the Banach space $X$ has the UMD property and is of martingale cotype $q$, for some $q>2$, then for every $f\in L^p_X(\mathbb R^n,\gamma_{-1})$, $1<p<\infty$, the limit $\displaystyle\lim_{t\rightarrow 0^+}\mathcal C_{i,t}^{\mathcal A}(f)(x)$, exists, for almost all $x\in\mathbb R^n$, for each $i=1,...,n$. In particular, for every $1<p<\infty$, $f\in L^p(\mathbb R^n,\gamma_{-1})$, and $i=1,...,n$, the limit $\displaystyle\lim_{t\rightarrow 0^+}\mathcal C_{i,t}^{\mathcal A}(f)(x)$, exists, for almost all $x\in\mathbb R^n$.

 \begin{cor}\label{cor1.6}
 Let $1<p<\infty$ and $i=1,...,n$. For every $f\in L^p(\mathbb R^n,\gamma_{-1})$,
 $$\mathcal R_i^{\mathcal A}(f)(x)=\lim_{t\rightarrow 0^+}\mathcal C_{i,t}^{\mathcal A}(f)(x),\;\;\;\mbox{for almost all }x\in\mathbb R^n.$$
 \end{cor}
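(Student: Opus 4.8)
The plan is to identify the limit $\lim_{t\to 0^+}\mathcal{C}_{i,t}^{\mathcal{A}}(f)$ with $\mathcal{R}_i^{\mathcal{A}}(f)$ by first checking the identity on a dense subspace and then transferring it to all of $L^p(\mathbb{R}^n,\gamma_{-1})$ using the boundedness of the relevant maximal/variational operators. A natural dense class is $\operatorname{span}\{\widetilde{\mathcal{H}}_k:k\in\mathbb{N}^n\}$, or alternatively $C_c^\infty(\mathbb{R}^n)$; on such functions all the manipulations below are justified by absolute convergence. First I would use the subordination formula $P_s^{\mathcal{A}}=\frac{s}{2\sqrt{\pi}}\int_0^\infty \frac{e^{-s^2/4u}}{u^{3/2}}T_u^{\mathcal{A}}\,du$ together with $\partial_{x_i}$ to write, for $f$ in the dense class,
\begin{align*}
\mathcal{C}_{i,t}^{\mathcal{A}}(f)(x)&=\int_t^\infty \partial_{x_i}P_s^{\mathcal{A}}(f)(x)\,ds.
\end{align*}
Letting $t\to 0^+$ and interchanging the (absolutely convergent, on the dense class) integrals, one gets $\lim_{t\to0^+}\mathcal{C}_{i,t}^{\mathcal{A}}(f)(x)=\int_0^\infty \partial_{x_i}P_s^{\mathcal{A}}(f)(x)\,ds$. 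A direct computation with the subordination formula, changing the order of integration in $s$ and $u$ and using $\int_0^\infty \frac{s}{2\sqrt\pi}\,\frac{e^{-s^2/4u}}{u^{3/2}}\,ds=\frac{1}{\sqrt{\pi u}}$, converts this into $\frac{1}{\sqrt{\pi}}\int_0^\infty \partial_{x_i}T_u^{\mathcal{A}}(f)(x)\,\frac{du}{\sqrt{u}}$, which is precisely $\mathcal{R}_i^{\mathcal{A}}(f)(x)$ by the definition of the kernel $\mathcal{R}_i^{\mathcal{A}}(x,y)$ and Fubini (valid since $f$ is nice). This establishes the identity on the dense subspace.

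Next I would upgrade to general $f\in L^p(\mathbb{R}^n,\gamma_{-1})$ by a standard density argument. Fix $f\in L^p(\mathbb{R}^n,\gamma_{-1})$ and a sequence $f_m$ in the dense class with $f_m\to f$ in $L^p$. By Theorem~\ref{th1.3}, the operator $f\mapsto \sup_{\epsilon>0}|\mathcal{R}_{i,\epsilon}^{\mathcal{A}}(f)|$ (controlled by $V_\rho(\{\mathcal{R}_{i,\epsilon}^{\mathcal{A}}\}_{\epsilon>0})$ plus $|\mathcal{R}_i^{\mathcal{A}}(f)|$) is bounded on $L^p(\mathbb{R}^n,\gamma_{-1})$, and by Theorem~\ref{th1.4} the maximal operator $\sup_{t>0}|\mathcal{C}_{i,t}^{\mathcal{A}}(f)|$ (again controlled by $V_\rho(\{\mathcal{C}_{i,t}^{\mathcal{A}}\}_{t>0})$ plus $|\mathcal{C}_{i,t_0}^{\mathcal{A}}(f)|$ for a fixed $t_0$) is bounded on $L^p(\mathbb{R}^n,\gamma_{-1})$ as well. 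Form the oscillation-type quantity $\Omega f(x)=\limsup_{t\to0^+}\mathcal{C}_{i,t}^{\mathcal{A}}(f)(x)-\liminf_{t\to0^+}\mathcal{C}_{i,t}^{\mathcal{A}}(f)(x)$; it is dominated pointwise by $2\sup_{t>0}|\mathcal{C}_{i,t}^{\mathcal{A}}((f-f_m))|+\Omega f_m$, and $\Omega f_m=0$ since the limit exists for $f_m$. Hence $\|\Omega f\|_{L^p}\le 2C\|f-f_m\|_{L^p}\to0$, so $\Omega f=0$ a.e., i.e. $g_i(f):=\lim_{t\to0^+}\mathcal{C}_{i,t}^{\mathcal{A}}(f)$ exists a.e.; the same argument shows $\lim_{t\to 0^+}\mathcal{C}_{i,t}^{\mathcal{A}}(f_m)\to g_i(f)$ in measure, while on the dense class this limit equals $\mathcal{R}_i^{\mathcal{A}}(f_m)\to \mathcal{R}_i^{\mathcal{A}}(f)$ in $L^p$ by the boundedness of $\mathcal{R}_i^{\mathcal{A}}$. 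Matching limits along a subsequence gives $g_i(f)=\mathcal{R}_i^{\mathcal{A}}(f)$ a.e.

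The main obstacle I anticipate is not the density/transference step, which is routine once the maximal bounds from Theorems~\ref{th1.3} and~\ref{th1.4} are in hand, but the justification of the interchanges of integration in the first step: one must verify that for $f$ in the dense class the double integral $\int_0^\infty\!\!\int_0^\infty$ arising from the subordination formula, with the extra derivative $\partial_{x_i}$ landing on the Gauss--Weierstrass--type kernel $T_u^{\mathcal{A}}(x,y)$, is absolutely convergent. This requires a pointwise estimate on $|\partial_{x_i}T_u^{\mathcal{A}}(x,y)|$ that is integrable in $u$ near $0$ and near $\infty$ against $u^{-1/2}$ (near $u=0$ the singularity $\sim u^{-1/2}$ times a Gaussian in $(x-y)/\sqrt{u}$ is integrable after integrating in $y$ over a compact support of $f$; near $u=\infty$ the factor $e^{-nu}$ forces decay) — essentially the same estimates already underlying the definition of $\mathcal{R}_i^{\mathcal{A}}(x,y)$ and its $L^p$-boundedness in \cite{Br1,BrSj}, so one can cite those. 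Once absolute convergence is secured, Fubini does all the work and the computation collapses to the stated identity.
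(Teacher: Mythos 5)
There is a genuine gap in your first step, the identification on the dense class. Writing $\lim_{t\to 0^+}\mathcal C_{i,t}^{\mathcal A}(f)(x)=\frac{1}{\sqrt\pi}\int_0^\infty \partial_{x_i}T_u^{\mathcal A}(f)(x)\,\frac{du}{\sqrt u}$ is fine for $f\in C_c^\infty(\mathbb R^n)$, but the next move, ``which is precisely $\mathcal R_i^{\mathcal A}(f)(x)$ by the definition of the kernel and Fubini,'' is exactly the point that cannot be done by Fubini: $\mathcal R_i^{\mathcal A}(f)$ is defined as a principal value, and the double integral you would need to be absolutely convergent is not. Indeed, for small $u$ one has $|\partial_{x_i}T_u^{\mathcal A}(x,y)|\sim u^{-n/2-1}|x-y|\,e^{-c|x-y|^2/u}$, so $\int_{\mathbb R^n}|\partial_{x_i}T_u^{\mathcal A}(x,y)|\,|f(y)|\,dy\sim u^{-1/2}$ whenever $x$ lies in the support of $f$, and then $\int_0^1 u^{-1/2}\cdot u^{-1/2}\,du=\infty$; equivalently, $\int_0^\infty|\partial_{x_i}T_u^{\mathcal A}(x,y)|\frac{du}{\sqrt u}\sim |x-y|^{-n}$ near the diagonal, which is not locally integrable against $|f(y)|$. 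So the estimate you propose in your last paragraph (``integrable after integrating in $y$ over a compact support of $f$'') is false, and it must be: if the double integral were absolutely convergent, the principal value would reduce to an absolutely convergent integral, contradicting the genuinely singular (Calder\'on--Zygmund) nature of $\mathcal R_i^{\mathcal A}(x,y)$. The references you invoke prove $L^p$-bounds for the principal value, not absolute convergence, so they do not close this gap.

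The paper circumvents precisely this difficulty by subtracting the classical Euclidean conjugate Poisson operators $Q_{i,t}$: for $f\in C_c^\infty(\mathbb R^n)$ the difference kernel $\mathcal C_{i,t}^{\mathcal A}(x,y)-Q_{i,t}(x,y)$ is dominated, uniformly in $t>0$, by a kernel $H(x,y)$ which is locally integrable (of order $(1+|x|)^{1/2}|x-y|^{-(n-1/2)}$ in the local region, with Gaussian-type bounds in the global region), so dominated convergence gives $\lim_{t\to0^+}\bigl(\mathcal C_{i,t}^{\mathcal A}(f)(x)-Q_{i,t}(f)(x)\bigr)=\int f(y)\bigl(\mathcal R_i^{\mathcal A}(x,y)-\mathcal R_i(x,y)\bigr)dy$, an absolutely convergent integral which therefore also equals the limit of its truncations; the singular cancellation is then supplied entirely by the classical fact $\lim_{t\to0^+}Q_{i,t}(f)=\mathcal R_i(f)$ in the principal-value sense. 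Your second step (upgrading from the dense class to all of $L^p(\mathbb R^n,\gamma_{-1})$ via the variational bounds of Theorems \ref{th1.3} and \ref{th1.4} and boundedness of $\mathcal R_i^{\mathcal A}$) is essentially the paper's argument and is fine, but the dense-class identification needs to be repaired along the comparison-with-$Q_{i,t}$ lines rather than by a direct Fubini interchange.
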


Let $1\leq p<\infty$ and $f\in L^p(\mathbb R^n,\gamma_{-1})$. We have that

$$\lim_{t\rightarrow 0^+}T_t^{\mathcal A}(f)(x)=f(x)\;\;\mbox{and}\;\;\lim_{t\rightarrow +\infty}T_t^{\mathcal A}(f)(x)=0,\;\;\;\mbox{for almost all }x\in\mathbb R^n.$$
Then, if $(t_j)_{j=-\infty}^\infty$ is a decreasing sequence in $(0,\infty)$ such that $\displaystyle\lim_{j\rightarrow -\infty}t_j=0$ and $\displaystyle\lim_{j\rightarrow +\infty}t_j=+\infty$,

 $$\sum_{j=-\infty}^\infty \left(T_{t_{j+1}}^{\mathcal A}(f)(x)-T_{t_j}^{\mathcal A}(f)(x)\right)=f(x),\;\;\;\mbox{for almost all }x\in\mathbb R^n.$$
Here the cancellation is important for the convergence. It is natural to ask about the convergence of the series

$$\sum_{j=-\infty}^\infty v_j\left(T_{t_{j+1}}^{\mathcal A}(f)(x)-T_{t_j}^{\mathcal A}(f)(x)\right)$$
where $\{v_j\}_{j=-\infty}^\infty$ is a bounded sequence of complex numbers. Special cases appear when $v_j\in\{+1,-1\}$ for every $j\in\mathbb Z$.

Assume that $\{t_j\}_{j=-\infty}^\infty$ is an increasing sequence in $(0,\infty)$ such that $\displaystyle\lim_{j\rightarrow -\infty}t_j=0$ and $\{v_j\}_{j=-\infty}^\infty$ is a bounded sequence of complex numbers. For every $N=(N_1,N_2)\in\mathbb Z^2$, $N_1<N_2$ and $\alpha \geq 0$, we define

$$T_N^{\mathcal A,\alpha}(f)=\sum_{j=N_1}^{N_2}v_j\left(t^\alpha\partial_t^\alpha T_t^{\mathcal A}(f)_{|t=t_{j+1}}-t^\alpha\partial_t^\alpha T_t^{\mathcal A}(f)_{|t=t_j}\right).$$
We consider the maximal operator

$$T_*^{\mathcal A,\alpha}(f)=\sup_{\begin{array}{c} N=(N_1,N_2)\in\mathbb Z^2 \\
 N_1<N_2
\end{array}}\left|T_N^{\mathcal A,\alpha}(f)\right|.$$
We define the operator $P_N^{\mathcal A,k}(f)$, $N=(N_1,N_2)\in\mathbb Z^2$, $N_1<N_2$, and $k\in\mathbb N$, and $P_*^{\mathcal A,k}(f)$, $k\in\mathbb N$, as above but considering the Poisson semigroup $\{P_t^{\mathcal A}\}_{t>0}$ instead of $\{T_t^{\mathcal A}\}_{t>0}$.

\begin{thm}\label{th1.7}
Assume that $\{v_j\}_{j=-\infty}^\infty$ is a bounded sequence of complex numbers and that $(t_j)_{j=-\infty}^\infty$ is an increasing sequence in $(0,\infty)$ which is $\lambda$-lacunary with $\lambda >1$ (that is, $t_{j+1}\geq \lambda t_j, \;\;j\in\mathbb Z$).
\begin{enumerate}
    \item[(a)] The operator $T_*^{\mathcal A,\alpha}$ is bounded from $L^p(\mathbb R^n,\gamma_{-1})$ into itself, for every $1<p<\infty$ and $\alpha\geq 1$ or $\alpha=0$.
    \item[(b)] $T_*^{\mathcal A,\alpha}$ is bounded from $L^1(\mathbb R^n,\gamma_{-1})$ into $L^{1,\infty}(\mathbb R^n,\gamma_{-1})$, when $\alpha=1$ or $\alpha=0$.
    \item[(c)] $P_*^{\mathcal A,\alpha}$ is bounded from $L^p(\mathbb R^n,\gamma_{-1})$ into itself, for every $1<p<\infty$, and from $L^1(\mathbb R^n,\gamma_{-1})$ into $L^{1,\infty}(\mathbb R^n,\gamma_{-1})$, for every $\alpha\geq 0$.
\end{enumerate}
\end{thm}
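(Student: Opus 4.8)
First, I would pass to the kernel side and use the local/global splitting that runs through the whole paper. For $N=(N_1,N_2)\in\mathbb Z^2$ with $N_1<N_2$, by differentiation under the integral sign and the fundamental theorem of calculus,
$$T_N^{\mathcal A,\alpha}(f)(x)=\int_{\mathbb R^n}K_N^{\mathcal A,\alpha}(x,y)f(y)\,dy,\qquad K_N^{\mathcal A,\alpha}(x,y)=\sum_{j=N_1}^{N_2}v_j\int_{t_j}^{t_{j+1}}\partial_s\big(s^\alpha\partial_s^\alpha T_s^{\mathcal A}(x,y)\big)\,ds.$$
Fix a suitable local region $\mathcal L=\{(x,y):\ |x-y|\le C_n\min(1,1/|x|)\}$ and split $K_N^{\mathcal A,\alpha}=K_N^{\mathcal A,\alpha}\chi_{\mathcal L}+K_N^{\mathcal A,\alpha}\chi_{\mathcal L^c}$. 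Since $\sup_N|a_N+b_N|\le\sup_N|a_N|+\sup_N|b_N|$, it is enough to bound separately the local maximal operator $T_{*,\mathrm{loc}}^{\mathcal A,\alpha}$ and the global maximal operator $T_{*,\mathrm{glob}}^{\mathcal A,\alpha}$ formed from these two pieces; the same scheme applies to $P_*^{\mathcal A,\alpha}$ after replacing $T_s^{\mathcal A}$ by $P_s^{\mathcal A}$ and using the subordination formula.

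For the global part the supremum over $N$ costs nothing: since the intervals $[t_j,t_{j+1}]$, $N_1\le j\le N_2$, tile $[t_{N_1},t_{N_2+1}]$ and $|v_j|\le\|\{v_j\}_j\|_\infty$, one has $|K_N^{\mathcal A,\alpha}(x,y)|\,\chi_{\mathcal L^c}(x,y)\le\|\{v_j\}_j\|_\infty\,\Phi^{\mathcal A,\alpha}(x,y)$ for every $N$, where $\Phi^{\mathcal A,\alpha}(x,y)=\chi_{\mathcal L^c}(x,y)\int_0^\infty|\partial_s(s^\alpha\partial_s^\alpha T_s^{\mathcal A}(x,y))|\,ds$. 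I would then establish the pointwise bound on $\Phi^{\mathcal A,\alpha}$ needed to run a Schur-type test on $L^p(\mathbb R^n,\gamma_{-1})$, $1\le p<\infty$, together with the weak type $(1,1)$. Passing to $\gamma_{-1}$ amounts to replacing $T_s^{\mathcal A}(x,y)$ by $T_s^{\mathcal A}(x,y)e^{-|y|^2}$, which by the Mehler formula \eqref{eq1.2} carries the full Gaussian exponent $\exp(-|x-y|^2/(1-e^{-2s})-2\langle x,y\rangle/(1+e^{-s}))$; on $\mathcal L^c$ this decay beats the growth $e^{|x|^2}$ of the measure after the customary decomposition of $\mathcal L^c$ (according to whether $|x|\,|x-y|\gtrsim1$ or $|x-y|\gtrsim1$) and an optimization in $s$. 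The Weyl derivative $s^\alpha\partial_s^\alpha$ is controlled by writing $\partial_s^\alpha=D^\alpha$ as the stated integral of $\partial_s^m T_s^{\mathcal A}$ and using that $\partial_s^m T_s^{\mathcal A}(x,y)$ equals $T_s^{\mathcal A}(x,y)$ times a polynomial in $(1-e^{-2s})^{-1}$, $e^{-s}$ and the coordinates of $x-e^{-s}y$; the Poisson kernel is treated analogously through its subordination integral.

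For the local part I would compare $K_N^{\mathcal A,\alpha}\chi_{\mathcal L}$ with the kernel $\widetilde K_N^{\alpha}(x,y)=\sum_{j=N_1}^{N_2}v_j\int_{t_j}^{t_{j+1}}\partial_s(s^\alpha\partial_s^\alpha W_s(x-y))\,ds$ of the corresponding Euclidean differential transform, $W_s$ being the heat semigroup $e^{s\Delta/2}$ (respectively the classical Poisson semigroup in case (c)). On $\mathcal L$ the extra factors $e^{-ns}$, $e^{-s}$ and $1-e^{-2s}$ present in $T_s^{\mathcal A}$ but not in $W_s$ produce, after differentiation, only a non-singular perturbation, so $|K_N^{\mathcal A,\alpha}-\widetilde K_N^{\alpha}|\,\chi_{\mathcal L}\le C\|\{v_j\}_j\|_\infty\,\psi(x,y)$ uniformly in $N$, with $\psi$ bounded on $L^p(dx)$ and of weak type $(1,1)$. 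Hence $T_{*,\mathrm{loc}}^{\mathcal A,\alpha}(f)(x)\le\mathcal T_*^{\alpha}\big(f\chi_{B(x,C_n\min(1,1/|x|))}\big)(x)+C\|\{v_j\}_j\|_\infty\int\psi(x,y)|f(y)|\,dy$, where $\mathcal T_*^{\alpha}$ is the maximal Euclidean differential transform operator, built from $W_s$ exactly as $T_*^{\mathcal A,\alpha}$ is built from $T_t^{\mathcal A}$. One then needs the Euclidean statement: $\mathcal T_*^{\alpha}$ is bounded on $L^p(\mathbb R^n,dx)$, $1<p<\infty$, for $\alpha\ge1$ and for $\alpha=0$, and of weak type $(1,1)$ for $\alpha=1$ and for $\alpha=0$ (its Poisson counterpart holding for every $\alpha\ge0$). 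This would follow from: for fixed $N$, $\widetilde K_N^{\alpha}$ is a Calder\'on--Zygmund kernel with constants bounded by $\|\{v_j\}_j\|_\infty$ uniformly in $N$; the operators are uniformly $L^2$-bounded by Plancherel (the multiplier $\sum_{j}v_j(m_\alpha(t_{j+1}\xi)-m_\alpha(t_j\xi))$, $m_\alpha$ the symbol of $s^\alpha\partial_s^\alpha W_s$, being bounded uniformly in $N$ thanks to the $\lambda$-lacunarity of $\{t_j\}$); and a Cotlar-type argument passes from these uniform bounds to $\sup_N$. It is precisely here that the restriction on $\alpha$ in (a)--(b) is forced, while subordination widens it to all $\alpha\ge0$ for the Poisson semigroup, giving (c). Since on $\mathcal L$ the measures $dx$ and $\gamma_{-1}$ are comparable with uniform constants, the $L^p(dx)$ and weak-type bounds for $\mathcal T_*^{\alpha}$ and $\psi$ transfer to $L^p(\mathbb R^n,\gamma_{-1})$ and to weak type $(1,1)$ with respect to $\gamma_{-1}$; combined with the global part, this finishes the proof.

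I expect the global estimate to be the main obstacle: one must keep track, through the fractional Weyl derivative, of the exact Gaussian exponent in $\partial_s(s^\alpha\partial_s^\alpha T_s^{\mathcal A}(x,y))$ precisely enough that, after integrating in $s$ and subdividing $\mathcal L^c$, the Schur test against the exponentially growing measure $\gamma_{-1}$ closes --- a delicate balance between $e^{-ns}$, the size of $1-e^{-2s}$ near $s=0$ and near $s=\infty$, and the quantities $|x|$, $|y|$, $|x-y|$ on the various subregions. By contrast, the local part is routine once the Euclidean maximal differential transform result, including the treatment of $\sup_N$ and the sharp range of $\alpha$ for the heat semigroup, has been established.
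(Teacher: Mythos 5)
Your proposal is correct and follows essentially the same route as the paper: a local/global splitting in which the global maximal operator is dominated, uniformly in $N$, by the kernel $\int_0^\infty|\partial_t(t^\alpha\partial_t^\alpha T_t^{\mathcal A}(x,y))|\,dt$ already estimated in the proof of Theorem \ref{th1.1} (Schur-type test for $L^p(\mathbb R^n,\gamma_{-1})$, weak $(1,1)$ for $\alpha\in[0,1]$), while the local part is compared with the Euclidean differential transform, whose maximal operator with fractional derivatives is handled exactly as you indicate (uniform Calder\'on--Zygmund kernel estimates, Plancherel, lacunarity, and a Chao--Torrea/Cotlar-type argument; this is the paper's Proposition \ref{prop4.1}). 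The one slight imprecision is the source of the restriction in part (b): it arises from combining the Euclidean restriction ($\alpha\geq 1$ or $\alpha=0$, forced in the lacunary estimate (P2)) with the fact that the global weak $(1,1)$ bound in the inverse Gaussian setting is only available for $\alpha\in[0,1]$, not solely from the Cotlar step as you suggest.
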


Chao and Torrea (\cite[Theorem 1.1]{CT}) proved a version of the last theorem for the classical heat semigroup in $\mathbb R^n$ when $\alpha =0$. In order to prove Theorem \ref{th1.7} we need to extend \cite[Theorem 1.1]{CT} by considering maximal operators involving fractional derivatives for Euclidean heat and Poisson semigroups in $\mathbb R^n$.

In the next sections we prove the results we have just stated. Throughout the paper by $c$ and $C$ we always denote positive constants that can change in each occurrence.

\section{Proof of Theorems \ref{th1.1} and \ref{th1.2}}

\subsection{Proof of Theorem \ref{th1.1}}
Firstly we study the operators associated with $\{T_t^{\mathcal A}\}_{t>0}$. According to \cite[Corollary 4.5 and the comments in the last lines of the section 5.2]{LeX} (see also, \cite[Theorem 3.3]{JR1}) we have that, for every $1<p<\infty$ and $k\in\mathbb N$, the operators $V_\rho(\{t^k\partial_t^k T_t^{\mathcal A}\}_{t>0})$ and  $O(\{t^k\partial_t^k T_t^{\mathcal A}\}_{t>0},\{t_j\}_{j\in\mathbb N})$,  are bounded from $L^p(\mathbb R^n,\gamma_{-1})$ into itself, and the family $\{\lambda\Lambda(\{t^k\partial_t^k T_t^{\mathcal A}\}_{t>0},\lambda)^{1/\rho}\}_{t>0}$ is uniformly bounded from  $L^p(\mathbb R^n,\gamma_{-1})$ into itself.

We define, for every $\delta >0$, the set

$$N_{\delta}=\left\{(x,y)\in\mathbb R^n\times\mathbb R^n\;:\;|x-y|<n\delta\min\{1,\frac{1}{|x|}\}\right\}$$
$N_\delta$ is called a local region and the complement $N_\delta^c$ of $N_\delta$ is named a global region.

For every $t>0$, we define

$$T_{t,loc}^{\mathcal A}(f)(x)=T_t^{\mathcal A}(\chi_{N_\delta}(x,\cdot)f)(x),\;\;\;\;x\in\mathbb R^n,$$
and
$$T_{t,glob}^{\mathcal A}(f)(x)=T_t^{\mathcal A}(\chi_{N_\delta^c}(x,\cdot)f)(x),\;\;\;\;x\in\mathbb R^n.$$
In the sequel local and global operators will be defined always in the above way where $\delta$ will be fixed in each case.

Let $\beta >0$. We firstly study the global operators. Suppose that $0<t_1<...<t_k$, $k\in\mathbb N$, and $f\in L^p(\mathbb R^n,\gamma_{-1})$, $1\leq p<\infty$. We have that

\begin{align*}
& \left(  \sum_{j=1}^{k-1}  \left|t^\beta\partial_t^\beta T_t^{\mathcal A}(f)(x)_{|t=t_{j+1}}-t^\beta\partial_t^\beta T_t^{\mathcal A}(f)(x)_{|t=t_j}\right|^\rho \right)^{1/\rho} \\
 & =\left(\sum_{j=1}^{k-1}  \left|\int_{t_j}^{t_{j+1}}\partial_t\left[t^\beta\partial_t^\beta T_t^{\mathcal A}(f)(x)\right]dt\right|^\rho\right)^{1/\rho} \\
 & \leq \sum_{j=1}^{k-1}  \left|\int_{t_j}^{t_{j+1}}\partial_t\left[t^\beta\partial_t^\beta T_t^{\mathcal A}(f)(x)\right]dt\right| \\
  & \leq \int_0^\infty \left|\partial_t\left[t^\beta\partial_t^\beta T_t^{\mathcal A}(f)(x)\right]\right|dt
\end{align*}
Then,

\begin{align*}
V_\rho(\{t^\beta\partial_t^\beta  T_t^{\mathcal A}\}_{t>0}) & (f)(x) \leq \int_0^\infty \left|\partial_t\left[t^\beta\partial_t^\beta T_t^{\mathcal A}(f)(x)\right]\right|dt \\
 & \leq \int_{\mathbb R^n}|f(y)|K(x,y)dy,\;\;\;\;x\in\mathbb R^n,
\end{align*}
where

$$K(x,y)=\int_0^\infty \left|\partial_t\left[t^\beta\partial_t^\beta T_t^{\mathcal A}(x,y)\right]\right|dt,\;\;\;\;(x,y)\in\mathbb R^n.$$
We choose $m\in\mathbb N$ such that $m-1\leq \beta <m$. We get

\begin{align}\label{2.2}
\partial_t & \left[t^\beta\partial_t^\beta T_t^{\mathcal A}(x,y)\right]=\frac{1}{\Gamma(m-\beta)}\partial_t\left[t^\beta\int_0^\infty\partial_u^m T_u^{\mathcal A}(x,y)_{|u=t+s}s^{m-\beta -1}ds\right] \nonumber \\
 & =\frac{1}{\Gamma(m-\beta)}\left(\beta t^{\beta -1}\int_0^\infty\partial_u^m T_u^{\mathcal A}(x,y)_{|u=t+s}s^{m-\beta -1}ds\right. \nonumber \\
  & \left.+t^\beta\int_0^\infty\partial_u^{m+1} T_u^{\mathcal A}(x,y)_{|u=t+s}s^{m-\beta -1}ds\right),\;\;\;\;x,y\in\mathbb R^n\;\mbox{and}\;t>0.
\end{align}

Teuwen (\cite[Thorem 5.1]{Teu}) give an expression for the $k$-th derivative $\partial_t^k \mathcal H_t^{ou}(x,y)$ of the integral kernel $ \mathcal H_t^{ou}(x,y)$ of Ornstein-Uhlenbeck semigroup given by

$$ \mathcal H_t^{ou}(x,y)=\frac{e^{-|y-xe^{-t}|^2/(1-e^{-2t})}}{(1-e^{-2t})^{n/2}},\;\;\;\;x,y\in\mathbb R^n\;\mbox{and}\;t>0.$$
The Teuwen's formula is not correct. Some signs have to be corrected as it is presented in the following one

\begin{align}\label{2.1}
& \partial_t^k  \mathcal H_t^{ou}(x,y)=(-1)^k\mathcal H_t^{ou}(x,y)\sum_{|m|=k}\left(\begin{array}{c} k \\ m_1,...,m_k \end{array}\right) \prod_{i=1}^n\sum_{s_i=0}^{m_i}\sum_{\ell_i=0}^{s_i}2^{-s_i}\left\{\begin{array}{c} m_i \\ s_i \end{array}\right\} \left(\begin{array}{c} s_i \\ \ell_i \end{array}\right) \nonumber \\
& (-1)^{s_i-\ell_i}\left(\frac{e^{-t}}{\sqrt{1-e^{-2t}}}\right)^{2s_i-\ell_i}H_{\ell_i}(x_i)H_{2s_i-\ell_i}\left(\frac{x_ie^{-t}-y_i}{\sqrt{1-e^{-2t}}}\right),\;\;\;\;x,y\in\mathbb R^n,\;k\in\mathbb N\;\mbox{and}\;t>0.
\end{align}
Here, for every $N,\ell\in\mathbb N$, $N\geq\ell$, $\left\{\begin{array}{c} N \\ \ell \end{array}\right\}$ denotes the Stirling numbers of the second kind, that is, the number of partitions of an $N$-set into $\ell$ nonempty subsets.

By using (\ref{2.2}) we get

\begin{align*}
& \partial_t^k T_t^{\mathcal A}(x,y)  = \partial_t^k\left[e^{-nt}\frac{e^{-|x-ye^{-t}|^2/(1-e^{-2t})}}{(1-e^{-2t})^{n/2}}\right] =e^{|y|^2-|x|^2}\partial_t^k\left[e^{-nt}\frac{e^{-|y-xe^{-t}|^2/(1-e^{-2t})}}{(1-e^{-2t})^{n/2}}\right] \\
& =e^{|y|^2-|x|^2}\sum_{j=0}^k \left(\begin{array}{c} k \\j \end{array}\right)(-n)^{k-j}e^{-nt}\partial_t^j\left[\frac{e^{-|y-xe^{-t}|^2/(1-e^{-2t})}}{(1-e^{-2t})^{n/2}}\right] \\
& =e^{|y|^2-|x|^2}(-1)^k\sum_{j=0}^k \left(\begin{array}{c} k \\j \end{array}\right)(-n)^{k-j}e^{-nt}\frac{e^{-|y-xe^{-t}|^2/(1-e^{-2t})}}{(1-e^{-2t})^{n/2}}\sum_{|a|=j}\left(\begin{array}{c} j \\ a_1,...,a_j \end{array}\right) \prod_{i=1}^n\sum_{s_i=0}^{a_i}\sum_{\ell_i=0}^{s_i}2^{-s_i} \\
& \left\{\begin{array}{c} a_i \\ s_i \end{array}\right\} \left(\begin{array}{c} s_i \\ \ell_i \end{array}\right)(-1)^{s_i-\ell_i}\left(\frac{e^{-t}}{\sqrt{1-e^{-2t}}}\right)^{2s_i-\ell_i}H_{\ell_i}(x_i)H_{2s_i-\ell_i}\left(\frac{x_ie^{-t}-y_i}{\sqrt{1-e^{-2t}}}\right),\;\;\;\;x,y\in\mathbb R^n,\;k\in\mathbb N\;\mbox{and}\;t>0.
\end{align*}
We have that

\begin{align}\label{2.2.1}
\int_0^\infty & t^{\beta -1} \left|\int_0^\infty\partial_u^m T_u^{\mathcal A}(x,y)_{|u=t+s}s^{m-\beta -1}ds\right|dt\leq  \int_0^\infty t^{\beta -1} \int_t^\infty\left|\partial_u^m T_u^{\mathcal A}(x,y)\right|(u-t)^{m-\beta -1}dudt\nonumber \\
& \leq \int_0^\infty\left|\partial_u^m T_u^{\mathcal A}(x,y)\right|\int_0^ut^{\beta -1}(u-t)^{m-\beta -1}dtdu \nonumber \\
& \leq C\int_0^\infty u^{m-1}\left|\partial_u^m T_u^{\mathcal A}(x,y)\right|du,\;\;\;\;x,y\in\mathbb R^n,
\end{align}
and

\begin{align}\label{2.2.2}
\int_0^\infty & t^{\beta} \left|\int_0^\infty\partial_u^{m+1} T_u^{\mathcal A}(x,y)_{|u=t+s}s^{m-\beta -1}ds\right|dt  \leq C\int_0^\infty u^{m}\left|\partial_u^{m+1} T_u^{\mathcal A}(x,y)\right|du,\;\;\;\;x,y\in\mathbb R^n.
\end{align}

From (\ref{2.2}) we deduce that

$$K(x,y)\leq C\left(\int_0^\infty u^{m-1}\left|\partial_u^m T_u^{\mathcal A}(x,y)\right|du+\int_0^\infty u^{m}\left|\partial_u^{m+1} T_u^{\mathcal A}(x,y)\right|du\right),\;\;\;\;x,y\in\mathbb R^n.$$

For every $j=0,...,m$, we define

\begin{align*}
& H_j(t,x,y)  = t^{m-1}e^{-nt}e^{|y|^2-|x|^2}(-1)^k\frac{e^{-|y-xe^{-t}|^2/(1-e^{-2t})}}{(1-e^{-2t})^{n/2}}\sum_{|\ell|=j}\left(\begin{array}{c} j \\ \ell_1,...,\ell_j \end{array}\right) \prod_{i=1}^n\sum_{s_i=0}^{\ell_i}\sum_{d_i=0}^{s_i}2^{-s_i} \\
& \left\{\begin{array}{c} \ell_i \\ s_i \end{array}\right\} \left(\begin{array}{c} s_i \\ d_i \end{array}\right)(-1)^{s_i-d_i}\left(\frac{e^{-t}}{\sqrt{1-e^{-2t}}}\right)^{2s_i-d_i}H_{d_i}(x_i)H_{2s_i-d_i}\left(\frac{x_ie^{-t}-y_i}{\sqrt{1-e^{-2t}}}\right),\;\;\;\;x,y\in\mathbb R^n\;\mbox{and}\;t>0.
\end{align*}
Let $j=0,...,m$. We get

\begin{align*}
|H_j(t,x,y)| & \leq C t^{m-1}e^{-nt}e^{|y|^2-|x|^2}\frac{ e^{-\epsilon|y-xe^{-t}|^2/(1-e^{-2t})}}{(1-e^{-2t})^{n/2}}\sum_{|\ell|=j} \prod_{i=1}^n\sum_{s_i=0}^{\ell_i}\sum_{d_i=0}^{s_i}  \\
& \cdot \left(\frac{e^{-t}}{\sqrt{1-e^{-2t}}}\right)^{2s_i-d_i}|H_{d_i}(x_i)|,\;\;\;\;x,y\in\mathbb R^n\;\mbox{and}\;t>0.
\end{align*}
Here $0<\epsilon <1$ will be fixed later.

For every $\ell\in\mathbb N$, $|x|^\ell\leq C(|x+y|^\ell +|x-y|^\ell)$, $x,y\in\mathbb R^n$. Also, by making the change of variable $\displaystyle t=\log\left(\frac{1+s}{1-s}\right)\in(0,\infty)$ we obtain
$$\frac{|y-xe^{-t}|^2}{1-e^{-2t}}=\frac{1}{4s}|x(1-s)-y(1+s)|^2=\frac14(s|x+y|^2+\frac1s|x-y|^2)+\frac{|y|^2-|x|^2}{2},\;\;\;\;x,y\in\mathbb R^n.$$
Then, for every $\ell\in\mathbb N$,

\begin{align*}
|x|^{\ell} e^{-\epsilon |y-xe^{-t}|^2/(1-e^{-2t})}  & \leq C  (|x+y|^\ell +|x-y|^\ell)e^{-\frac{\epsilon}{4}(s|x+y|^2+\frac1s|x-y|^2)-\epsilon\frac{|y|^2-|x|^2}{2}} \\
& \leq C\frac{1}{s^{\ell/2}}e^{-\frac{\eta}{4}(s|x+y|^2+\frac1s|x-y|^2)-\epsilon\frac{|y|^2-|x|^2}{2}} \\
& \leq C \frac{1}{(1-e^{-2t})^{\ell/2}}e^{-\eta |y-xe^{-t}|^2/(1-e^{-2t})}e^{-\frac{\epsilon -\eta}{2}(|y|^2-|x|^2)},\;\;\;\;x,y\in\mathbb R^n\;\mbox{and}\;t>0.
\end{align*}
Here $\eta\in(0,\epsilon)$.

It follows that,

\begin{align*}
|H_j(t,x,y)| & \leq C t^{m-1}e^{-nt}e^{|y|^2-|x|^2}\frac{ e^{-\epsilon|y-xe^{-t}|^2/(1-e^{-2t})}}{(1-e^{-2t})^{n/2}}\sum_{|\ell|=j} \prod_{i=1}^n\sum_{s_i=0}^{\ell_i}\sum_{d_i=0}^{s_i}  \left(\frac{e^{-t}}{\sqrt{1-e^{-2t}}}\right)^{2s_i-d_i}(1+|x|)^{d_i} \\
& \leq C t^{m-1}e^{-nt}\frac{1}{(1-e^{-2t})^{n/2}}e^{-\eta |y-xe^{-t}|^2/(1-e^{-2t})}e^{(1-\frac{\epsilon -\eta}{2})(|y|^2-|x|^2)}\sum_{s=0}^j\left(\frac{e^{-2t}}{1-e^{-2t}}\right)^s \\
 & \leq C  t^{m}e^{-nt}\frac{e^{-\eta |y-xe^{-t}|^2/(1-e^{-2t})}}{(1-e^{-2t})^{n/2+1}}e^{(1-\frac{\epsilon -\eta}{2})(|y|^2-|x|^2)}\sum_{s=0}^j\left(\frac{e^{-2t}}{1-e^{-2t}}\right)^s, \;\;\;\;x,y\in\mathbb R^n\;\mbox{and}\;t>0.
\end{align*}
We conclude that

$$|t^{m-1}\partial_t^m T_t^{\mathcal A}(x,y)|\leq C  t^{m}e^{-nt}\frac{e^{-\eta |y-xe^{-t}|^2/(1-e^{-2t})}}{(1-e^{-2t})^{n/2+1}}e^{(1-\frac{\epsilon -\eta}{2})(|y|^2-|x|^2)}\sum_{s=0}^m\left(\frac{e^{-2t}}{1-e^{-2t}}\right)^s, \;\;x,y\in\mathbb R^n\;\mbox{and}\;t>0.$$
We also have that

$$|t^{m}\partial_t^{m+1} T_t^{\mathcal A}(x,y)|\leq C  t^{m}e^{-nt}\frac{e^{-\eta |y-xe^{-t}|^2/(1-e^{-2t})}}{(1-e^{-2t})^{n/2+1}}e^{(1-\frac{\epsilon -\eta}{2})(|y|^2-|x|^2)}\sum_{s=0}^m\left(\frac{e^{-2t}}{1-e^{-2t}}\right)^s, \;\;x,y\in\mathbb R^n\;\mbox{and}\;t>0.$$
Here $0<\eta <\epsilon <1$.

Then
$$K(x,y)\leq C e^{(1-\frac{\epsilon -\eta}{2})(|y|^2-|x|^2)}\int_0^\infty t^{m}e^{-nt}\frac{e^{-\eta |y-xe^{-t}|^2/(1-e^{-2t})}}{(1-e^{-2t})^{n/2+1}}\sum_{s=0}^m\left(\frac{e^{-2t}}{1-e^{-2t}}\right)^sdt, \;\;x,y\in\mathbb R^n.$$
We now choose $\delta=\frac{1}{\eta}$. If $(x,y)\notin N_\delta$, then $(\sqrt{\eta}x,\sqrt{\eta}y)\notin N_1$. In order to estimate $K(x,y)$ in the global region $N_\delta^c$ we use some ideas developed in \cite{PS}.

By making the change of variable $e^{-t}=\sqrt{1-s}$, $t\in (0,\infty)$, we obtain

\begin{align*}
\int_0^\infty t^{m} & e^{-nt}\frac{e^{-\eta |y-xe^{-t}|^2/(1-e^{-2t})}}{(1-e^{-2t})^{n/2+1}}\sum_{\ell=0}^m\left(\frac{e^{-2t}}{1-e^{-2t}}\right)^\ell dt \\
& \leq C \int_0^1 (1-s)^{n/2}|\log(1-s)|^m\frac{e^{-\eta |y-\sqrt{1-s}x|^2/s}}{s^{n/2+1}}\sum_{\ell=0}^m\left(\frac{1-s}{s}\right)^\ell\frac{ds}{1-s}.
\end{align*}
Suppose that $n\geq 2$ and $(x,y)\notin N_\delta$. We get

$$K(x,y)\leq C e^{(1-\frac{\epsilon -\eta}{2})(|y|^2-|x|^2)}\int_0^1 \frac{e^{-\eta |y-\sqrt{1-s}x|^2/s}}{s^{n/2+1}}\frac{ds}{\sqrt{1-s}}, \;\;x,y\in\mathbb R^n.$$
Inspired by \cite{PS} we denote, for every $x,y\in\mathbb R^n$, $a=\eta(|x|^2+|y|^2)$, $b=2\eta \langle x,y\rangle$, $s_0=\frac{2\sqrt{a^2-b^2}}{a+\sqrt{a^2-b^2}}$, $u_0=\eta\frac{|y|^2-|x|^2+|x+y||x-y|}{2}$.

Assume that $b>0$. Since $(\sqrt{\eta} x,\sqrt{\eta} y)\notin N_1$, we can write

\begin{align*}
\int_0^1 & \frac{e^{-\eta |y-\sqrt{1-s}x|^2/s}}{s^{n/2+1}}\frac{ds}{\sqrt{1-s}}= \int_0^1  \frac{e^{-|\sqrt{\eta}y-\sqrt{1-s}\sqrt{\eta}x|^2/s}}{s^{n/2+1}}\frac{ds}{\sqrt{1-s}} \\
& =\int_0^1 \left( \frac{e^{-|\sqrt{\eta}y-\sqrt{1-s}\sqrt{\eta}x|^2/s}}{s^{n/2}}\right)^{\frac{n-1}{n}}\frac{e^{-|\sqrt{\eta}y-\sqrt{1-s}\sqrt{\eta}x|^2/sn}}{s^{3/2}\sqrt{1-s}}ds \\
 & \leq C\left(\frac{e^{-u_0}}{s_0^{n/2}}\right)^{\frac{n-1}{n}}\int_0^1 \frac{e^{-|\sqrt{\eta}y-\sqrt{1-s}\sqrt{\eta}x|^2/sn}}{s^{3/2}\sqrt{1-s}}ds.
\end{align*}
By proceeding as in the proof of \cite[Lemma 2.3]{PS} since $a^2-b^2>n$ we obtain

$$\int_0^1 \frac{e^{-|\sqrt{\eta}y-\sqrt{1-s}\sqrt{\eta}x|^2/sn}}{s^{3/2}\sqrt{1-s}}ds\leq C\frac{e^{-u_0/n}}{s_0^{1/2}}.$$
It follows that

$$\int_0^1 \frac{e^{-\eta |y-\sqrt{1-s}x|^2/s}}{s^{n/2+1}}\frac{ds}{\sqrt{1-s}}= \leq C\frac{e^{-u_0}}{s_0^{n/2}} $$
According to \cite[Proposition 2.1]{MPS} we get

\begin{align}\label{2.25}
K(x,y) &\leq C e^{(1-\frac{\epsilon -\eta}{2})(|y|^2-|x|^2)}\left(\frac{|x+y|}{|x-y|}\right)^{n/2}e^{-\frac{\eta}{2}(|y|^2-|x|^2)-\frac{\eta}{2}|x+y||x-y|} \nonumber \\
 & \leq C |x+y|^n e^{(1-\frac{\epsilon }{2})(|y|^2-|x|^2)-\frac{\eta}{2}|x+y||x-y|},
\end{align}
because $|x+y||x-y|\geq n\delta$.

Suppose now that $n=1$. We decompose the integral over $(0,\infty)$ in two integrals, one of them over $(0,1)$ and the other one over $(1,\infty)$. By proceeding as above we can  obtain that

\begin{align*}
\int_0^1 t^{m} & e^{-nt}\frac{e^{-\eta |y-xe^{-t}|^2/(1-e^{-2t})}}{(1-e^{-2t})^{n/2+1}}\sum_{\ell=0}^m\left(\frac{e^{-2t}}{1-e^{-2t}}\right)^\ell dt \;e^{(1-\frac{\epsilon -\eta}{2})(|y|^2-|x|^2)} \\
& \leq C |x+y|^n e^{(1-\frac{\epsilon }{2})(|y|^2-|x|^2)-\frac{\eta}{2}|x+y||x-y|}.
\end{align*}
On the other hand, by using \cite[Proposition 2.1]{MPS}, we get

\begin{align*}
\int_1^\infty t^{m} & e^{-nt}\frac{e^{-\eta |y-xe^{-t}|^2/(1-e^{-2t})}}{(1-e^{-2t})^{n/2+1}}\sum_{\ell=0}^m\left(\frac{e^{-2t}}{1-e^{-2t}}\right)^\ell dt\; e^{(1-\frac{\epsilon -\eta}{2})(|y|^2-|x|^2)} \\
 & \leq C \int_1^\infty t^{m}  e^{-nt}dt\;\sup_{t>0}\left[\frac{e^{-\eta |y-xe^{-t}|^2/(1-e^{-2t})}}{(1-e^{-2t})^{n/2+1}}\right]e^{(1-\frac{\epsilon -\eta}{2})(|y|^2-|x|^2)} \\
& \leq C |x+y|^n e^{(1-\frac{\epsilon }{2})(|y|^2-|x|^2)-\frac{\eta}{2}|x+y||x-y|}.
\end{align*}

We conclude that
$$K(x,y)\leq C |x+y|^n e^{(1-\frac{\epsilon }{2})(|y|^2-|x|^2)-\frac{\eta}{2}|x+y||x-y|}.$$

Assume that $(x,y)\notin N_\delta$ and $b\leq 0$. We make the change of variable $z= a(\frac1s -1)$ and by taking into account that $\frac{|y-x\sqrt{1-s}|^2}{s}\geq\frac{a}{s} -|x|^2$, $s\in (0,1)$, it follows that

\begin{align*}
\int_0^\infty t^{m} & e^{-nt}\frac{e^{-\eta |y-xe^{-t}|^2/(1-e^{-2t})}}{(1-e^{-2t})^{n/2+1}}\sum_{\ell=0}^m\left(\frac{e^{-2t}}{1-e^{-2t}}\right)^\ell dt\; e^{(1-\frac{\epsilon -\eta}{2})(|y|^2-|x|^2)} \\
 & \leq C \left(\int_0^1 \frac{e^{-\eta |y-x\sqrt{1-s}|^2/s}}{s^{n/2+1}}ds\;+\;\sup_{t>0}\left[\frac{e^{-\eta |y-xe^{-t}|^2/(1-e^{-2t})}}{(1-e^{-2t})^{n/2}}\right]\right)e^{(1-\frac{\epsilon -\eta}{2})(|y|^2-|x|^2)} \\
  & \leq Ce^{-\eta |y|^2}e^{(1-\frac{\epsilon -\eta}{2})(|y|^2-|x|^2)}\left(1+\int_0^\infty e^{-z}z^{n/2 -1}dz\right) \\
& \leq C e^{|y|^2(1-\frac{\epsilon +\eta }{2})-|x|^2(1-\frac{\epsilon -\eta}{2})},
\end{align*}
because $a>1/2$.

We obtain

$$K(x,y)\leq C e^{|y|^2(1-\frac{\epsilon +\eta }{2})-|x|^2(1-\frac{\epsilon -\eta}{2})}.$$

We have proved that, for every $(x,y)\notin N_\delta$,

$$K(x,y)\leq C \left\{\begin{array}{l}
e^{|y|^2(1-\frac{\epsilon +\eta }{2})-|x|^2(1-\frac{\epsilon -\eta}{2})},\;\;\;\langle x,y\rangle\leq 0 \\
 \\
|x+y|^n e^{(1-\frac{\epsilon }{2})(|y|^2-|x|^2)-\frac{\eta}{2}|x+y||x-y|},\;\;\;\langle x,y\rangle\geq 0
\end{array}
\right.$$

Let $1<p<\infty $. We choose $0<\eta <\epsilon <1$ such that $\epsilon -\eta   <2(1-1/p)<\epsilon +\eta $. We define the operator $\mathbb L_p$ by

$$\mathbb L_p(f)(x)=\int_{\mathbb R^n}K(x,y)\chi_{N_\delta^c}(x,y)f(y)dy.$$
Since $||y|^2-|x|^2|\leq|x+y||x-y|$, we have that

\begin{align*}
\int_{\mathbb R^n} & e^{\frac{|x|^2-|y|^2}{p}}K(x,y)\chi_{N_\delta^c}(x,y)dy\leq \int_{\mathbb R^n}e^{|y|^2(1-\frac{\epsilon +\eta }{2}-\frac{1}{p})}dy\;e^{-|x|^2(1-\frac{\epsilon -\eta}{2}-\frac{1}{p})} \\
 & +\int_{\mathbb R^n}|x+y|^n e^{-|x+y||x-y|(\frac{\eta}{2}-|1-\frac{\epsilon }{2}-\frac{1}{p}|)}dy\leq C,\;\;\;\;\;x\in\mathbb R^n.
\end{align*}

We also get

$$\sup_{y\in\mathbb R^n}\int_{\mathbb R^n} e^{\frac{|x|^2-|y|^2}{p}}K(x,y)\chi_{N_\delta^c}(x,y)dx<\infty.$$
It follows that the operator $\mathcal L_p$ defined by

$$\mathcal L_p(f)(x)=\int_{\mathbb R^n} e^{\frac{|x|^2-|y|^2}{p}}K(x,y)\chi_{N_\delta^c}(x,y)f(y)dy,$$
is bounded from $L^q(\mathbb R^n,dx)$ into itself for every $1<q<\infty$. In particular, $\mathcal L_p$ is bounded from $L^p(\mathbb R^n,dx)$ into itself, that is, $\mathbb L_p$ is bounded from $L^p(\mathbb R^n,\gamma_{-1})$ into itself.

We conclude that $V_{\rho,glob}(\{t^\beta\partial_t^\beta T_t^{\mathcal A}\}_{t>0})$ is bounded from $L^p(\mathbb R^n,\gamma_{-1})$ into itself.

When $\beta=0$ we can proceed in a similar way.

We now study the operator $V_{\rho,loc}(\{t^\beta\partial_t^\beta T_t^{\mathcal A}\}_{t>0})$ defined by $N_\delta$. Here we do not need to impose condition to $\delta$.

We have that

\begin{align*}
V_{\rho,loc}(\{t^\beta\partial_t^\beta T_t^{\mathcal A}\}_{t>0})(f) & =V_{\rho,loc}(\{t^\beta\partial_t^\beta T_t^{\mathcal A}\}_{t>0})(f)-V_{\rho,loc}(\{t^\beta\partial_t^\beta T_t\}_{t>0})(f)+V_{\rho,loc}(\{t^\beta\partial_t^\beta T_t\}_{t>0})(f) \\
 & \leq V_{\rho,loc}(\{t^\beta\partial_t^\beta (T_t^{\mathcal A}-T_t)\}_{t>0})(f)+V_{\rho,loc}(\{t^\beta\partial_t^\beta T_t\}_{t>0})(f).
\end{align*}
Here $\{T_t\}_{t>0}$ denotes the Euclidean heat semigroup.

We begin with the study of the operator $V_{\rho,loc}(\{t^\beta\partial_t^\beta T_t\}_{t>0})$. The heat kernel in $\mathbb R^n$ is given by

$$T_t(z)=\frac{1}{(2\pi t)^{n/2}}e^{-|z|^2/2t},\;\;\;\;z\in\mathbb R^n\;\mbox{and}\;t>0.$$
Let $k\in\mathbb N$. We have that, for certain $a_0.a_1,...,a_k\in\mathbb R$,

$$t^k\partial_t^k T_t(z)=T_t(z)(a_0+a_1\frac{|z|^2}{2t}+....+\frac{|z|^{2k}}{(2t)^k}),\;\;\;\;z\in\mathbb R^n\;\mbox{and}\;t>0.$$
This equality can be proved by induction. We define

$$\Phi_k(u)=e^{-u^2}(a_0+a_1u^2+....+a_ku^{2k}),\;\;\;\;u\in\mathbb R.$$

Suppose that $m\in\mathbb N$ and $m-1\leq \beta <m$, $\beta >0$. We can write

\begin{align}\label{2.3}
t^\beta\partial_t^\beta T_t(z) & =\frac{t^\beta}{\Gamma(m-\beta)}\int_t^\infty \partial_u^m T_u(z)(u-t)^{m-\beta -1}du \nonumber\\
 & = \frac{t^\beta}{(2\pi)^{n/2}\Gamma(m-\beta)}\int_t^\infty \frac{1}{u^{n/2}}\Phi_m\left(\frac{|z|}{\sqrt{2u}}\right)u^{-m}(u-t)^{m-\beta -1}du \nonumber\\
 & = \frac{t^{-n/2}}{(2\pi)^{n/2}\Gamma(m-\beta)}\int_1^\infty \Phi_m\left(\frac{|z|}{\sqrt{2tv}}\right)v^{-m-n/2}(v-1)^{m-\beta -1}dv \nonumber \\
 & = \frac{1}{t^{n/2}}\Psi_\beta\left(\frac{|z|}{\sqrt{t}}\right),\;\;\;\;z\in\mathbb R^n\;\mbox{and}\;t>0,
\end{align}
where

$$\Psi_\beta(u)=\frac{1}{(2\pi)^{n/2}\Gamma(m-\beta)}\int_1^\infty \Phi_m\left(\frac{u}{\sqrt{2v}}\right)v^{-m-n/2}(v-1)^{m-\beta -1}dv,\;\;\;\;u>0.$$
We have that $\displaystyle\lim_{u\rightarrow\infty}\Psi_\beta(u)=0$ and

\begin{align*}
\int_0^\infty & |\Psi'_\beta(u)|u^n du=\int_0^\infty \frac{u^n}{\sqrt{2}} \frac{1}{(2\pi)^{n/2}\Gamma(m-\beta)}\int_1^\infty \left|\Phi'_m\left(\frac{u}{\sqrt{2v}}\right)\right|\frac{(v-1)^{m-\beta -1}}{v^{m+n/2+1/2}}dvdu \\
 & = \frac{2^{n/2}}{(2\pi)^{n/2}\Gamma(m-\beta)}\int_1^\infty\frac{(v-1)^{m-\beta -1}}{v^m}\int_0^\infty|\Phi'_m(w)|w^n dwdv<\infty.
\end{align*}
We define $\Psi_{\beta ,t}(u)=\frac{1}{t^n}\Psi_\beta\left(\frac{u}{t}\right)$, $u,t>0$. We can write $\Psi_{\beta.\sqrt{t}}(|z|)=t^\beta\partial_t^\beta T_t(z)$, $z\in\mathbb R^n$ and $t>0$. We consider the operator

\begin{align}\label{2.15}
\varphi_{\beta,t}(f)=\Psi_{\beta,t}*f,\;\;\;\;t>0.
\end{align}
It follows that $V_{\rho}(\{t^\beta\partial_t^\beta T_t\}_{t>0})(f)=V_{\rho}(\{\varphi_{\beta,\sqrt{t}}\}_{t>0})(f)$, $\mathcal O(\{t^\beta\partial_t^\beta T_t\}_{t>0},\{t_j\}_{j=1}^\infty)(f)=\mathcal O(\{\varphi_{\beta,\sqrt{t}}\}_{t>0},\{t_j\}_{j=1}^\infty)(f)$, and, there exists $C>0$ such that

$$\frac1C S_V(\{t^\beta\partial_t^\beta T_t\}_{t>0})(f)\leq S_V(\{\varphi_{\beta,\sqrt{t}}\}_{t>0})(f)\leq CS_V(\{t^\beta\partial_t^\beta T_t\}_{t>0})(f).$$
According to \cite[Lemma 2.4]{CJRW1} the operators $V_{\rho}(\{t^\beta\partial_t^\beta T_t\}_{t>0})$, $\mathcal O(\{t^\beta\partial_t^\beta T_t\}_{t>0},\{t_j\}_{j=1}^\infty)$, $S_V(\{t^\beta\partial_t^\beta T_t\}_{t>0})$ and by (\ref{eq1.1}), $\lambda\Lambda(\{t^\beta\partial_t^\beta T_t\}_{t>0},\lambda)^{1/\rho}$ are bounded from $L^p(\mathbb R^n,dx)$ into itself, and from $L^1(\mathbb R^n,dx)$ into $L^{1,\infty}(\mathbb R^n,dx)$.

We consider the Banach space $F_\rho$ defined as follows, A function $g:(0,\infty)\rightarrow\mathbb C$ is in $F_\rho$ provided that

\begin{align}\label{2.10}
\|g\|_\rho:=\sup_{0<t_1<...<t_k,\;k\in\mathbb N}\left(\sum_{j=1}^{k-1}|g(t_{j+1})-g(t_j)|^\rho\right)^{1/\rho}<\infty
\end{align}
It is clear that $V_{\rho}(\{t^\beta\partial_t^\beta T_t\}_{t>0})(f)(x)=\|t^\beta\partial_t^\beta T_t(f)(x)\|_\rho$. We have that

\begin{align*}
\|t^\beta\partial_t^\beta T_t(x,y)\|_{F_\rho} & =\sup_{0<t_1<...<t_k,\;k\in\mathbb N}\left(\sum_{j=1}^{k-1}|t^\beta\partial_t^\beta T_t(x,y)_{|t=t_{j+1}}-t^\beta\partial_t^\beta T_t(x,y)_{|t=t_j}|^\rho\right)^{1/\rho} \\
 & \leq \sup_{0<t_1<...<t_k,\;k\in\mathbb N}\sum_{j=1}^{k-1}\int_{t_j}^{t_{j+1}}|\partial_t(t^\beta\partial_t^\beta T_t(x,y))|dt \\
& \leq \int_0^\infty|\partial_t(t^\beta\partial_t^\beta T_t(x,y))|dt,\;\;\;\;x,y\in\mathbb R^n.
\end{align*}
By (\ref{2.3}) we deduce that

\begin{align*}
\|t^\beta\partial_t^\beta T_t(x,y)\|_{F_\rho} & \leq \int_0^\infty\left|\partial_t\left[\frac{1}{t^{n/2}}\Psi_\beta\left(\frac{|x-y|)}{\sqrt{t}}\right)\right]\right|dt\\
 & \leq C \int_0^\infty\frac{1}{t^{n/2+1}}\left(\left|\Psi_\beta\left(\frac{|x-y|)}{\sqrt{t}}\right)\right|+\left|\Psi'_\beta\left(\frac{|x-y|)}{\sqrt{t}}\right)\right|\frac{|x-y|}{\sqrt{t}}\right)dt\\
& \leq C\int_0^\infty\left(\frac{u}{|x-y|}\right)^{n+2}\left(|\Psi_\beta(u)|+|\Psi'_\beta(u)|u\right)\frac{|x-y|^2}{u^3}du \\
& \leq \frac{C}{|x-y|^n},\;\;\;\;x,y\in\mathbb R^n,\;x\neq y.
\end{align*}
The arguments developed in the proof of \cite[Proposition 3.2.7]{Sa} (see also \cite[p. 10]{HTV}) allow us to conclude that $V_{\rho,loc}(\{t^\beta\partial_t^\beta T_t\}_{t>0})$ is bounded from $L^p(\mathbb R^n,dx)$ into itself, for every $1<p<\infty$, and from $L^1(\mathbb R^n,dx)$ into $L^{1,\infty}(\mathbb R^n,dx)$. By proposition \cite[Proposition 3.2.5]{Sa}, $V_{\rho,loc}(\{t^\beta\partial_t^\beta T_t\}_{t>0})$ is bounded from $L^p(\mathbb R^n,\gamma_{-1})$ into itself, for every $1<p<\infty$, and from $L^1(\mathbb R^n,\gamma_{-1})$ into $L^{1,\infty}(\mathbb R^n,\gamma_{-1})$.

When $\beta=0$ we can proceed in a similar way.

We now dedicate our attention to the operator $V_{\rho,loc}(\{t^\beta\partial_t^\beta (T_t^{\mathcal A}-T_t)\}_{t>0})$. Suppose that $\beta >0$, $m\in\mathbb N$ and $m-1\leq\beta <m$. When $\beta =0$ we can proceed similarly. We have that

\begin{align*}
 & V_{\rho,loc}(\{t^\beta\partial_t^\beta (T_t^{\mathcal A}-T_t)\}_{t>0})(f)(x)\leq \int_0^\infty |\partial_t(t^\beta\partial_t^\beta (T_t^{\mathcal A}-T_t)(f)(x)|dt  \\
 & \leq \int_{\mathbb R^n}|f(y)|\int_0^\infty |\partial_t(t^\beta\partial_t^\beta (T_t^{\mathcal A}(x,y)-T_t(x,y))|dtdy,\;\;\;\;x\in\mathbb R^n.
\end{align*}
We define

$$H(x,y)=\int_0^\infty |\partial_t(t^\beta\partial_t^\beta (T_t^{\mathcal A}(x,y)-T_t(x,y))|dt,\;\;\;\;x,y\in\mathbb R^n.$$
Let $\delta>0$. We consider the following operator

$$\mathcal H(f)(x)=\int_{\mathbb R^n}f(y)H(x,y)\chi_{N_\delta}(x,y)dy.$$
Our objective is to see that $\mathcal H$ is bounded from $L^p(\mathbb R^n,\gamma_{-1})$ into itself, for every $1<p<\infty$, and from $L^1(\mathbb R^n,\gamma_{-1})$ into $L^{1,\infty}(\mathbb R^n,\gamma_{-1})$.

We can write

\begin{align*}
  \partial_t(t^\beta & \partial_t^\beta (T_t^{\mathcal A}(x,y)-T_t(x,y))=\frac{\beta t^{\beta -1}}{\Gamma(m-\beta)} \int_0^\infty \partial_u^m(T_u^{\mathcal A}(x,y)-T_u(x,y))_{|u=t+s}s^{m-\beta -1}ds  \\
 & +\frac{t^\beta}{\Gamma(m-\beta)} \int_0^\infty \partial_u^{m+1}(T_u^{\mathcal A}(x,y)-T_u(x,y))_{|u=t+s}s^{m-\beta -1}ds,\;\;\;\;x,y\in\mathbb R^n\;\mbox{and}\;t>0.
\end{align*}
Then,

\begin{align*}
  & \int_0^\infty|\partial_t(t^\beta  \partial_t^\beta (T_t^{\mathcal A}(x,y) -T_t(x,y))|dt  \\
 & \leq C\left( \int_0^\infty u^{m-1}|\partial_u^{m}(T_u^{\mathcal A}(x,y)-T_u(x,y))|du + \int_0^\infty u^m |\partial_u^{m+1}(T_u^{\mathcal A}(x,y)-T_u(x,y))|du\right),\;\;\;\;x,y\in\mathbb R^n.
\end{align*}
Let $\ell\in\mathbb N$. We have that

\begin{align}\label{2.4}
\partial_t^\ell\left[\frac{e^{-u^2/2t}}{(2t)^{1/2}}\right]=\frac{1}{2^\ell}H_{2\ell}\left(\frac{u}{\sqrt{2t}}\right)\frac{e^{-u^2/2t}}{(2t)^{1/2+\ell}},\;\;\;\;u\in\mathbb R\;\mbox{and}\;t>0.
\end{align}
In order to prove (\ref{2.4}) we can proceed as follows. By recalling the definition of the Hermite polynomials we obtain

\begin{align*}
   &  \partial_t^\ell\left[\frac{e^{-u^2/2t}}{(2t)^{1/2}}\right]=\frac{1}{2^\ell}\partial_u^{2\ell}\left[\frac{e^{-u^2/2t}}{(2t)^{1/2}}\right]= \frac{1}{2^\ell}\frac{e^{-u^2/2t}}{(2t)^{1/2+\ell}}e^{z^2}\frac{d^{2\ell}}{dz^{2\ell}}e^{-z^2}_{|z=\frac{u}{\sqrt{2t}}} =\frac{1}{2^\ell}H_{2\ell}\left(\frac{u}{\sqrt{2t}}\right)\frac{e^{-u^2/2t}}{(2t)^{1/2+\ell}},\;\;\;\;u\in\mathbb R\;\mbox{and}\;t>0.
\end{align*}
Hence,

\begin{align*}
   &  \partial_t^\ell\left[\frac{e^{-|x-y|^2/2t}}{(2t)^{n/2}}\right]=\frac{e^{-|x-y|^2/2t}}{(2t)^{n/2}}\sum_{|k|=\ell}\left(\begin{array}{c} \ell \\
   k_1,...,k_n\end{array}\right)\prod_{i=1}^n\frac{1}{2^{k_i}}\frac{1}{(2t)^{k_i}}H_{2k_i}\left(\frac{x_i-y_i}{\sqrt{2t}}\right),\;\;\;\;x,y\in\mathbb R^n\;\mbox{and}\;t>0.
\end{align*}
By using (\ref{2.1}) we can write

\begin{align*}
  & \partial_t^{\ell}(T_t^{\mathcal A}(x,y)-T_t(x,y))=\partial_t^\ell\left[e^{-nt}\frac{e^{-|x-ye^{-t}|^2/(1-e^{-2t})}}{(1-e^{-2t})^{n/2}}-\frac{e^{-|x-y|^2/2t}}{(2t)^{n/2}}\right] \\
   & =\sum_{j=0}^{\ell -1}\left(\begin{array}{c} \ell \\ j \end{array}\right)(-n)^{\ell -j}e^{-nt} \partial_t^j\left[\frac{e^{-|x-ye^{-t}|^2/(1-e^{-2t})}}{(1-e^{-2t})^{n/2}}\right] + e^{-nt}\partial_t^{\ell}\left[\frac{e^{-|x-ye^{-t}|^2/(1-e^{-2t})}}{(1-e^{-2t})^{n/2}}\right]-\partial_t^{\ell}\left[\frac{e^{-|x-y|^2/2t}}{(2t)^{n/2}}\right] \\
    & =\sum_{j=0}^{\ell -1}\left(\begin{array}{c} \ell \\ j \end{array}\right)(-n)^{\ell -j}e^{-nt} \partial_t^j\left[\frac{e^{-|x-ye^{-t}|^2/(1-e^{-2t})}}{(1-e^{-2t})^{n/2}}\right] \\
     & \\
    & + e^{-nt} \frac{e^{-|x-ye^{-t}|^2/(1-e^{-2t})}}{(1-e^{-2t})^{n/2}}\sum_{|k|=\ell}\left(\begin{array}{c} \ell \\ k_1,...,k_n \end{array}\right) \prod_{i=1}^n\sum_{\begin{array}{c} m_i=0 \\ m_i\neq k_i\;\mbox{ for some } i=1,..,n \end{array}}^{k_i}2^{-m_i} \left\{\begin{array}{c} k_i \\ m_i \end{array}\right\} (-1)^{m_i}\\
& \cdot \left(\frac{e^{-t}}{\sqrt{1-e^{-2t}}}\right)^{2m_i}H_{2m_i}\left(\frac{y_ie^{-t}-x_i}{\sqrt{1-e^{-2t}}}\right) \\
 & \\
& + e^{-nt} \frac{e^{-|x-ye^{-t}|^2/(1-e^{-2t})}}{(1-e^{-2t})^{n/2}}\sum_{|k|=\ell}\left(\begin{array}{c} \ell \\ k_1,...,k_n \end{array}\right) \prod_{i=1}^n\sum_{m_i=0}^{k_i}\sum_{\begin{array}{c} d_i=0 \\ d_i\neq 0\;\mbox{ for some } i=1,..,n \end{array}}^{m_i}2^{-m_i} \left\{\begin{array}{c} k_i \\ m_i \end{array}\right\} \left(\begin{array}{c} m_i \\ d_i \end{array}\right)(-1)^{m_i-d_i}\\
& \cdot\left(\frac{e^{-t}}{\sqrt{1-e^{-2t}}}\right)^{2m_i-d_i}H_{d_i}(y_i)H_{2m_i-d_i}\left(\frac{y_ie^{-t}-x_i}{\sqrt{1-e^{-2t}}}\right) \\
 & \\
& + (e^{-nt}-1) \frac{e^{-|x-ye^{-t}|^2/(1-e^{-2t})}}{(1-e^{-2t})^{n/2}}\sum_{|k|=\ell}\left(\begin{array}{c} \ell \\ k_1,...,k_n \end{array}\right) \prod_{i=1}^n 2^{-k_i}\left(\frac{e^{-t}}{\sqrt{1-e^{-2t}}}\right)^{2k_i}H_{2k_i}\left(\frac{y_ie^{-t}-x_i}{\sqrt{1-e^{-2t}}}\right) \\
& +  \frac{e^{-|x-ye^{-t}|^2/(1-e^{-2t})}}{(1-e^{-2t})^{n/2}}\sum_{|k|=\ell}\left(\begin{array}{c} \ell \\ k_1,...,k_n \end{array}\right) \prod_{i=1}^n 2^{-k_i}\left(\frac{e^{-t}}{\sqrt{1-e^{-2t}}}\right)^{2k_i}H_{2k_i}\left(\frac{y_ie^{-t}-x_i}{\sqrt{1-e^{-2t}}}\right) \\
& - \frac{e^{-|x-y|^2/2t}}{(2t)^{n/2}}\sum_{|k|=\ell}\left(\begin{array}{c} \ell \\
   k_1,...,k_n\end{array}\right)\prod_{i=1}^n\frac{1}{2^{k_i}}\frac{1}{(2t)^{k_i}}H_{2k_i}\left(\frac{x_i-y_i}{\sqrt{2t}}\right),\;\;\;\;x,y\in\mathbb R^n\;\mbox{and}\;t>0.
\end{align*}

We have to do the estimations step by step. Assume that $(x,y)\in N_\delta$.
\newline

{\bf (a)} Let ${j=0,...,\ell -1}$. We can write

\begin{align*}
& \partial_t^j\left[\frac{e^{-|x-ye^{-t}|^2/(1-e^{-2t})}}{(1-e^{-2t})^{n/2}}\right]= \frac{e^{-|x-ye^{-t}|^2/(1-e^{-2t})}}{(1-e^{-2t})^{n/2}}\sum_{|k|=j}\left(\begin{array}{c} j \\ k_1,...,k_n \end{array}\right) \prod_{i=1}^n\sum_{m_i=0}^{k_i}\sum_{\ell_i=0}^{m_i}2^{-m_i} \left\{\begin{array}{c} k_i \\ m_i \end{array}\right\} (-1)^{m_i-\ell_i} \\
& \left(\begin{array}{c} m_i \\ \ell_i \end{array}\right)\left(\frac{e^{-t}}{\sqrt{1-e^{-2t}}}\right)^{2m_i-\ell_i}H_{\ell_i}(y_i)H_{2m_i-\ell_i}\left(\frac{y_ie^{-t}-x_i}{\sqrt{1-e^{-2t}}}\right),\;\;\;\;x,y\in\mathbb R^n,\;\mbox{and}\;t>0.
\end{align*}

We have to estimate
$$\int_0^\infty t^{\ell -1}e^{-nt}\frac{e^{-|x-ye^{-t}|^2/(1-e^{-2t})}}{(1-e^{-2t})^{n/2}}\left(\frac{e^{-t}}{\sqrt{1-e^{-2t}}}\right)^{2m-k}(1+|y|^k)\left(1+\left(\frac{|x-ye^{-t}|}{\sqrt{1-e^{-2t}}}\right)^{2m-k}\right)dt ,\;\;\;\;x,y\in\mathbb R^n.$$
where $m=0,...,j$ and $k=0,...,m$.

Let $m=0,...,j$ and $k=0,...,m$. By making the change of variables $\displaystyle t=\log\frac{1+s}{1-s}\in (0,\infty)$, we get

\begin{align*}
& \int_0^\infty t^{\ell -1}e^{-nt}\frac{e^{-|x-ye^{-t}|^2/(1-e^{-2t})}}{(1-e^{-2t})^{n/2}}\left(\frac{e^{-t}}{\sqrt{1-e^{-2t}}}\right)^{2m-k}(1+|y|^k)\left(1+\left(\frac{|x-ye^{-t}|}{\sqrt{1-e^{-2t}}}\right)^{2m-k}\right)dt \\
& \leq C\int_0^1 \left(\log\frac{1+s}{1-s}\right)^{\ell -1}(1-s)^{n-1}\frac{e^{-\frac{1}{8s}|(1+s)x-(1-s)y|^2}}{s^{n/2}}\left(\frac{1-s}{\sqrt{s}}\right)^{2m-k}(1+|y|)^kds,\;\;\;\;x,y\in\mathbb R^n.
\end{align*}
Since $\frac1s |(1+s)x-(1-s)y|^2=s|x+y|^2+\frac1s |x-y|^2+2(|x|^2-|y|^2)$ and $|y|\leq \frac12 (|x+y|+|x-y|)$, $x,y\in\mathbb R^n$ and $s\in(0,1)$, it follows that

$$e^{-\frac{1}{8s}|(1+s)x-(1-s)y|^2}(1+|y|)^k\leq C e^{-\frac{1}{4}(|x|^2-|y|^2)} e^{-c(s|x+y|^2+\frac1s |x-y|^2)}s^{-k/2},\;\;\;\;x,y\in\mathbb R^n\;\mbox{and}\;s\in(0,1).$$
Furthermore, since $(x,y)\in N_\delta$, $|x|^2-|y|^2\geq -C$ (see [line under (3.3.4)]{Sa}). We obtain

\begin{align*}
& \int_0^\infty t^{\ell -1}e^{-nt}\frac{e^{-|x-ye^{-t}|^2/(1-e^{-2t})}}{(1-e^{-2t})^{n/2}}\left(\frac{e^{-t}}{\sqrt{1-e^{-2t}}}\right)^{2m-k}(1+|y|^k)\left(1+\left(\frac{|x-ye^{-t}|}{\sqrt{1-e^{-2t}}}\right)^{2m-k}\right)dt \\
& \leq C\int_0^1 \left(\log\frac{1+s}{1-s}\right)^{\ell -1}\frac{(1-s)^{n-1+2m-k}}{s^{n/2+m}}e^{-c\frac{|x-y|^2}{s}}ds \\
 & \leq C\int_0^1\frac{1}{s^{n/2}}e^{-c\frac{|x-y|^2}{s}}ds\leq \frac{C}{|x-y|^{n-1}}.
\end{align*}
when $m=0=k$ and $n=1$ we split the integral in two, the first one from, $0$ to $1/2$, and the second one, from $1/2$ to $1$. We take into account that there exists $C>0$ such that,  $(\log(1-s))^{\ell -1}(1-s)^{d}\leq C$, with $0<d<1$ and $1/2<s<1$. \newline

{\bf (b)} Let $k=(k_1,...,k_n)\in\mathbb N^n$ such that $k_1+k_2+...+k_n=\ell$. We get

\begin{align*}
& e^{-nt} \frac{e^{-|x-ye^{-t}|^2/(1-e^{-2t})}}{(1-e^{-2t})^{n/2}} \prod_{i=1}^n\sum_{\begin{array}{c} m_i=0 \\ m_i\neq k_i\;\mbox{ for some } i=1,..,n \end{array}}^{k_i}\left(\frac{e^{-t}}{\sqrt{1-e^{-2t}}}\right)^{2m_i}\left|H_{2m_i}\left(\frac{y_ie^{-t}-x_i}{\sqrt{1-e^{-2t}}}\right)\right| \\
 & \leq C e^{-nt} \frac{e^{-|x-ye^{-t}|^2/(1-e^{-2t})}}{(1-e^{-2t})^{n/2}} \left(1+\left(\frac{e^{-t}}{\sqrt{1-e^{-2t}}}\right)^{2(\ell -1)}\right)\left(1+\left|\frac{x-ye^{-t}}{\sqrt{1-e^{-2t}}}\right|^{2(\ell -1)}\right),\;\;\;\;t>0.
\end{align*}
By proceeding as above we get

\begin{align*}
& \int_0^\infty t^{\ell -1}e^{-nt} \frac{e^{-|x-ye^{-t}|^2/(1-e^{-2t})}}{(1-e^{-2t})^{n/2}} \prod_{i=1}^n\sum_{\begin{array}{c} m_i=0 \\ m_i\neq k_i\;\mbox{ for some } i=1,..,n \end{array}}^{k_i}\left(\frac{e^{-t}}{\sqrt{1-e^{-2t}}}\right)^{2m_i}\left|H_{2m_i}\left(\frac{y_ie^{-t}-x_i}{\sqrt{1-e^{-2t}}}\right)\right|dt \\
 &\leq C \int_0^1 \left(\log\frac{1+s}{1-s}\right)^{\ell -1}\frac{(1-s)^{n-1}}{s^{n/2}}\left(1+\frac{(1-s)^{2(\ell -1)}}{s^{\ell -1}}\right)e^{-c\frac{|x-y|^2}{s}}ds \\
 & \leq C \int_0^1 \left(\log\frac{1+s}{1-s}\right)^{\ell -1}\frac{(1-s)^{n-1}}{s^{1/2}}\left(1+\frac{(1-s)^{2\ell}}{s^\ell}\right)e^{-c\frac{|x-y|^2}{s}}ds\frac{1}{|x-y|^{n-1}}\leq \frac{C}{|x-y|^{n-1}}.
\end{align*}
when $n=1$ we argue as in the last paragraph of {\bf (a)}. \newline

{\bf (c)} Let $k=(k_1,...,k_n)\in\mathbb N^n$ such that $k_1+k_2+...+k_n=\ell$. We get

\begin{align*}
&  \prod_{i=1}^n\sum_{m_i=0}^{k_i}\sum_{\begin{array}{c} d_i=0 \\ d_i\neq 0\;\mbox{ for some } i=1,..,n \end{array}}^{m_i}\left(\frac{e^{-t}}{\sqrt{1-e^{-2t}}}\right)^{2m_i-d_i}\left|H_{d_i}(y_i)\right|\left|H_{2m_i-d_i}\left(\frac{y_ie^{-t}-x_i}{\sqrt{1-e^{-2t}}}\right)\right| \\
 & \leq C \sum_{j=1}^\ell\sum_{i=1}^\ell \left(\frac{e^{-t}}{\sqrt{1-e^{-2t}}}+\left(\frac{e^{-t}}{\sqrt{1-e^{-2t}}}\right)^{2j-i}\right)(1+|y|)^i\left(1+\left(\frac{|x-ye^{-t}|}{\sqrt{1-e^{-2t}}}\right)^{2j-i}\right),\;\;\;\;t>0.
\end{align*}
Then,

\begin{align*}
& \int_0^\infty t^{\ell -1}e^{-nt} \frac{e^{-|x-ye^{-t}|^2/(1-e^{-2t})}}{(1-e^{-2t})^{n/2}} \prod_{i=1}^n\sum_{m_i=0}^{k_i}\sum_{\begin{array}{c} d_i=0 \\ d_i\neq 0\;\mbox{ for some } i=1,..,n \end{array}}^{m_i}\left(\frac{e^{-t}}{\sqrt{1-e^{-2t}}}\right)^{2m_i-d_i}  \\
& \cdot \left|H_{d_i}(y_i)\right|\left|H_{2m_i-d_i}\left(\frac{y_ie^{-t}-x_i}{\sqrt{1-e^{-2t}}}\right)\right|dt \\
 & \leq C \sum_{i,j=1}^\ell\int_0^1 \left(\log\frac{1+s}{1-s}\right)^{\ell -1}\frac{(1-s)^{n-1}}{s^{n/2}}\left(\frac{1-s}{\sqrt{s}}+\left(\frac{(1-s)}{\sqrt{s}}\right)^{2j-i}\right)\frac{1}{s^{i/2-1/4}}e^{-c\frac{|x-y|^2}{s}}ds\;(1+|y|)^{1/2} \\
 & \leq C \sum_{i,j=1}^\ell\int_0^1 \left(\log\frac{1+s}{1-s}\right)^{\ell -1}\frac{(1-s)^{n}}{s^{n/2}}\left(\frac{1}{s^{i/2+1/4}}+\frac{1}{s^{j-1/4}}\right)e^{-c\frac{|x-y|^2}{s}}ds\;(1+|y|)^{1/2} \\
 &  \leq C \int_0^1 \left(\log\frac{1+s}{1-s}\right)^{\ell -1}\frac{(1-s)^{n}}{s^{n/2+\ell-1/4}}e^{-c\frac{|x-y|^2}{s}}ds\;(1+|y|)^{1/2} \\
 &  \leq C \int_0^1 \frac{1}{s^{n/2+3/4}}e^{-c\frac{|x-y|^2}{s}}ds\;(1+|y|)^{1/2} \\
 & \leq C\frac{(1+|x|)^{1/2}}{|x-y|^{n-1/2}}.
\end{align*}\newline

{\bf (d)} Let $k=(k_1,...,k_n)\in\mathbb N^n$ such that $k_1+...+k_n=\ell$. For every $x\in\mathbb R^n$, we define $m(x)=\min\{1,\frac{1}{|x|^2}\}$. We have that

\begin{align*}
& \int_{m(x)}^\infty t^{\ell -1}e^{-nt} \frac{e^{-|x-ye^{-t}|^2/(1-e^{-2t})}}{(1-e^{-2t})^{n/2}} \sum_{|k|=\ell}\left(\begin{array}{c} \ell \\ k_1,...,k_n \end{array}\right)\prod_{i=1}^n2^{-k_i}\left(\frac{e^{-t}}{\sqrt{1-e^{-2t}}}\right)^{2k_i} \left|H_{2k_i}\left(\frac{y_ie^{-t}-x_i}{\sqrt{1-e^{-2t}}}\right)\right|dt \\
 & \leq C\int_{m(x)}^\infty t^{\ell -1} \frac{e^{-nt}}{(1-e^{-2t})^{n/2+\ell}}dt\leq\frac{C}{m(x)^{n/2}}\leq \frac{C}{m(x)^{1/4}|x-y|^{n-1/2}}  \\
 & \leq\frac{(1+|x|)^{1/2}}{|x-y|^{n-1/2}},
\end{align*}\newline
and

\begin{align*}
& \int_{m(x)}^\infty t^{\ell -1} \frac{e^{-|x-y|^2/2t}}{(2t)^{n/2}} \sum_{|k|=\ell}\left(\begin{array}{c} \ell \\ k_1,...,k_n \end{array}\right)\prod_{i=1}^n2^{-k_i}(2t)^{-k_i} \left|H_{2k_i}\left(\frac{y_i-x_i}{\sqrt{2t}}\right)\right|dt \\
 & \leq C\int_{m(x)}^\infty \frac{1}{t^{n/2+1}}dt\leq C\frac{(1+|x|)^{1/2}}{|x-y|^{n-1/2}},
\end{align*}\newline

{\bf (e)} Since $|e^{-nt}-1|\leq Ct$, $t\in (0,1)$, the same arguments than in {\bf (a)} allow us to obtain

\begin{align*}
\int_0^{m(x)} t^{\ell -1}(1-e^{-nt}) & \frac{e^{-|x-ye^{-t}|^2/(1-e^{-2t})}}{(1-e^{-2t})^{n/2}} \sum_{|k|=\ell}\left(\begin{array}{c} \ell \\ k_1,...,k_n \end{array}\right)\prod_{i=1}^n2^{-k_i}\left(\frac{e^{-t}}{\sqrt{1-e^{-2t}}}\right)^{2k_i}\left| H_{2k_i}\left(\frac{y_ie^{-t}-x_i}{\sqrt{1-e^{-2t}}}\right)\right|dt \\
  & \leq\frac{C}{|x-y|^{n-1}}.
\end{align*}\newline

{\bf (f)} By taking into account that
$$\left|\frac{1}{(1-e^{-2t})^{n/2}}-\frac{1}{(2t)^{n/2}}\right|\leq\frac{C}{t^{n/2-1}},\;\;\;\;0<t<1,$$
as in {\bf (a)} we get

\begin{align*}
\int_0^{m(x)} t^{\ell -1}\left|\frac{1}{(1-e^{-2t})^{n/2}}-\frac{1}{(2t)^{n/2}}\right| & e^{-|x-ye^{-t}|^2/(1-e^{-2t})} \sum_{|k|=\ell}\left(\begin{array}{c} \ell \\ k_1,...,k_n \end{array}\right)\prod_{i=1}^n2^{-k_i}\left(\frac{e^{-t}}{\sqrt{1-e^{-2t}}}\right)^{2k_i} \\
 & \cdot \left| H_{2k_i}\left(\frac{y_ie^{-t}-x_i}{\sqrt{1-e^{-2t}}}\right)\right|dt  \leq\frac{C}{|x-y|^{n-1}}.
\end{align*}\newline

{\bf (g)} We can write

$$|x-ye^{-t}|^2-|x-y|^2=|y|^2(1-e^{-t})^2+2\langle x-y,y\rangle(1-e^{-t}),\;\;\;\;t>0.$$
Then,

$$\frac{|x-ye^{-t}|^2-|x-y|^2}{t}\leq\frac{C}{t}(|y|^2t^2+t|x-y||y|)\leq C((|y|^2t+|x-y||y|),\;\;\;\;t\in(0,1).$$
We obtain

\begin{align*}
& \left|e^{-|x-ye^{-t}|^2/(1-e^{-2t})}-e^{-|x-y|^2/2t}\right|\leq C\left(\left|e^{-|x-ye^{-t}|^2/(1-e^{-2t})}-e^{-|x-ye^{-t}|^2/2t}\right|+\left|e^{-|x-ye^{-t}|^2/2t}-e^{-|x-y|^2/2t}\right|\right)    \\
 & \leq C\left(e^{-c|x-ye^{-t}|^2/t}|x-ye^{-t}|^2+e^{-c\min\{|x-ye^{-t}|^2,|x-y|^2\}/t}((|y|^2t+|x-y||y|)\right),\;\;\;t\in(0,1).
\end{align*}
By making $t=\log\frac{1+s}{1-s}\in(0,1)$ it follows that

\begin{align*}
& \left|e^{-|x-ye^{-t}|^2/(1-e^{-2t})}-e^{-|x-y|^2/2t}\right|_{|t=\log\frac{1+s}{1-s}}\leq Ce^{-c|x-y|^2/s}(s+|y|^2m(x)^{3/4}s^{1/4}+s^{1/4}m(x)^{1/4}|y|)   \\
 & \leq C e^{-c|x-y|^2/s}s^{1/4}(1+|y|)^{1/2},\;\;\;0<t<m(x),\;0<s<\frac{e-1}{e+1}.
\end{align*}
We can also get,

$$\frac{|ye^{-t}-x|}{\sqrt{1-e^{-2t}}}\leq C\frac{|y|(1-e^{-t})+|x-y|}{\sqrt{t}}\leq  C\left(|y|\sqrt{t}+\frac{|x-y|}{\sqrt{t}}\right)\leq C\left(1+\frac{|x-y|}{\sqrt{t}}\right),\;\;\;0<t<m(x).$$
We obtain

\begin{align*}
& \int_0^{m(x)} \left|t^{\ell -1}\frac{e^{-|x-ye^{-t}|^2/(1-e^{-2t})}-e^{-|x-y|^2/2t}}{(2t)^{n/2}}\sum_{|k|=\ell}\left(\begin{array}{c} \ell \\ k_1,...,k_n \end{array}\right)\prod_{i=1}^n2^{-k_i}\left(\frac{e^{-t}}{\sqrt{1-e^{-2t}}}\right)^{2k_i}  H_{2k_i}\left(\frac{y_ie^{-t}-x_i}{\sqrt{1-e^{-2t}}}\right) \right| dt \\
& \leq C \int_0^{\frac{e-1}{e+1}} \left(\log\frac{1+s}{1-s}\right)^{\ell -1}\frac{e^{-c\frac{|x-y|^2}{s}}}{s^{n/2+\ell-1/4}}\left(1+\frac{|x-y|}{\sqrt{s}}\right)^{2\ell}ds\;(1+|y|)^{1/2}    \\
 & \leq C \int_0^\infty \frac{e^{-c\frac{|x-y|^2}{s}}}{s^{n/2+3/4}}ds\;(1+|y|)^{1/2}\leq  C\frac{(1+|x|)^{1/2}}{|x-y|^{n-1/2}}.
\end{align*}\newline

{\bf (h)} We can write

$$\left|\left(\frac{e^{-t}}{\sqrt{1-e^{-2t}}}\right)^{2\ell}-\frac{1}{(2t)^{\ell}}\right|\leq C\frac{1}{t^{\ell -1/2}}\left|\frac{e^{-t}}{\sqrt{1-e^{-2t}}}-\frac{1}{\sqrt{2t}}\right|\leq\frac{1}{t^{\ell -1}},\;\;\;\;t\in (0,1).$$
It follows that

\begin{align*}
& \int_0^{m(x)} \left|t^{\ell -1}\frac{e^{-|x-y|^2/2t}}{(2t)^{n/2}}\left(\left(\frac{e^{-t}}{\sqrt{1-e^{-2t}}}\right)^{2\ell}-\frac{1}{(2t)^{\ell}}\right)\sum_{|k|=\ell}\left(\begin{array}{c} \ell \\ k_1,...,k_n \end{array}\right)\prod_{i=1}^n2^{-k_i}H_{2k_i}\left(\frac{y_ie^{-t}-x_i}{\sqrt{1-e^{-2t}}}\right) \right| dt \\
& \leq C \int_0^{\frac{e-1}{e+1}} \frac{e^{-c\frac{|x-y|^2}{s}}}{s^{n/2}}ds \leq \frac{C}{|x-y|^{n-1}}.
\end{align*}\newline

{\bf (i)} Let $r\in\mathbb N$. We have that

$$|(ae^{-u}-b)^r-(a-b)^r|\leq C\sum_{j=0}^{r-1}(|a|u)^{r-j}|a-b|^j,\;\;\;\;a,b\in\mathbb R\;\mbox{and}\;u\in (0,1).$$
We get

\begin{align*}
& e^{-|x-y|^2/2t}|(y_ie^{-t}-x_i)^r-(y_i-x_i)^r|\leq Ce^{-|x-y|^2/2t}\sum_{j=0}^{r-1}(|y_i|t)^{r-j}|x_i-y_i|^j   \\
 & \leq Ce^{-c|x-y|^2/t}\sum_{j=0}^{r-1}(|y|t)^{r-j}\leq Ce^{-c|x-y|^2/t}t^{3/4}|y|^{1/2} ,\;\;\;t\in(0,m(x)),
\end{align*}
and

\begin{align*}
& \left|\prod_{i=1}^nH_{2k_i}\left(\frac{e^{-t}y_i-x_i}{\sqrt{1-e^{-2t}}}\right)-\prod_{i=1}^nH_{2k_i}\left(\frac{x_i-y_i}{\sqrt{2t}}\right)\right|e^{-|x-y|^2/2t} \\
 & \leq Ce^{-|x-y|^2/2t}\sum_{j=1}^{n}\prod_{i=1}^{j-1}\left|H_{2k_i}\left(\frac{x_i-y_i}{\sqrt{2t}}\right)\right|\left|H_{2k_j}\left(\frac{e^{-t}y_i-x_i}{\sqrt{1-e^{-2t}}}\right)-H_{2k_j}\left(\frac{x_i-y_i}{\sqrt{2t}}\right)\right|\prod_{i=j+1}^n\left|H_{2k_i}\left(\frac{e^{-t}y_i-x_i}{\sqrt{1-e^{-2t}}}\right)\right| \\
 & \leq C e^{-c|x-y|^2/t}t^{1/4}|y|^{1/2} ,\;\;\;t\in(0,m(x)),
\end{align*}
where we understand that $\displaystyle\prod_{i=1}^0H_{2k_i}\left(\frac{x_i-y_i}{\sqrt{2t}}\right)=1$. Then,

\begin{align*}
& \int_0^{m(x)} \left|t^{\ell -1}\frac{e^{-|x-y|^2/2t}}{(2t)^{n/2+\ell}}\sum_{|k|=\ell}\left(\begin{array}{c} \ell \\ k_1...k_n \end{array}\right)2^{-\ell}\left(\prod_{i=1}^nH_{2k_i}\left(\frac{y_ie^{-t}-x_i}{\sqrt{1-e^{-2t}}}\right) -\prod_{i=1}^nH_{2k_i}\left(\frac{x_i-y_i}{\sqrt{2t}}\right)\right)\right| dt \\
& \leq C \int_0^{m(x)} \frac{e^{-c\frac{|x-y|^2}{t}}}{t^{n/2+3/4}}dt |y|^{1/2} \leq C \frac{(1+|x|)^2}{|x-y|^{n-1/2}}.
\end{align*}
By putting together all the above estimates we conclude that

$$\int_0^\infty |t^{\ell -1}\partial_t^{\ell}(T_t^{\mathcal A}(x,y)-T_t(x,y))|dt\leq C\frac{(1+|x|)^{1/2}}{|x-y|^{n-1/2}},\;\;\;\;(x,y)\in N_\delta.$$
Hence, we obtain

$$H(x,y)\leq C\frac{(1+|x|)^{1/2}}{|x-y|^{n-1/2}},\;\;\;\;(x,y)\in N_\delta.$$
We have that (see \cite[p. 19]{HTV})

\begin{align*}
 & \int_{\mathbb R^n} H(x,y)  \chi_{N_\delta}(x,y)dy\leq C\int_{|x-y|\leq \sqrt{m(x)}}\frac{(1+|x|)^{1/2}}{|x-y|^{n-1/2}}dy\leq C\int_0^{\sqrt{m(x)}}\frac{(1+|x|)^{1/2}}{u^{1/2}}du \\
& \leq C m(x)^{1/4}(1+|x|)^{1/2}\leq C,\;\;\;\;x\in\mathbb R^n.
\end{align*}
If $(x,y)\in N_\delta$, then $1+|x|\sim 1+|y|$ and $m(x)\sim m(y)$. By using this we get

$$\sup_{y\in\mathbb R^n}\int_{\mathbb R^n}\frac{(1+|x|)^{1/2}}{|x-y|^{n-1/2}}\chi_{N_\delta}(x,y)dx<\infty.$$
It follows that the operator $\mathcal H$ is bounded from $L^p(\mathbb R^n,dx)$ into itself, for every $1\leq p<\infty$. By \cite[Proposition 3.2.5]{Sa} $\mathcal H$ is bounded from $L^p(\mathbb R^n,\gamma_{-1})$ into itself, for every $1\leq p<\infty$.

We conclude That the operator $V_{\rho,loc}(\{t^\beta\partial_t^\beta(T_t^{\mathcal A}-T_t)\}_{t>0})$ is bounded from $L^p(\mathbb R^n,\gamma_{-1})$ into itself, for every $1\leq p<\infty$.

Thus we have proved that  $V_{\rho}(\{t^\beta\partial_t^\beta T_t^{\mathcal A} \}_{t>0})$ is bounded from $L^p(\mathbb R^n,\gamma_{-1})$ into itself, for every $1< p<\infty$. Furthermore  $V_{\rho,loc}(\{t^\beta\partial_t^\beta T_t^{\mathcal A}\}_{t>0})$ is bounded from $L^1(\mathbb R^n,\gamma_{-1})$ into $L^{1,\infty}(\mathbb R^n,\gamma_{-1})$, we are going to see that  $V_{\rho,glob}(\{t^\beta\partial_t^\beta T_t^{\mathcal A}\}_{t>0})$ is bounded from $L^1(\mathbb R^n,\gamma_{-1})$ into $L^{1,\infty}(\mathbb R^n,\gamma_{-1})$, when $\beta\in[0,1]$.

We firstly consider the case $\beta =0$. We have that

$$V_{\rho,glob}(\{T_t^{\mathcal A}\}_{t>0})(f)(x)\leq C\int_{\mathbb R^n}\chi_{N_1^c}(x,y)H(x,y)f(y)dy,\;\;\;\;x\in\mathbb R^n,$$
where

$$H(x,y)=\int_0^\infty|\partial_t T_t^{\mathcal A}(x,y)|dt,\;\;\;\;x,y\in\mathbb R^n.$$
We get

\begin{align*}
 \partial_t T_t^{\mathcal A}(x,y)= & -\frac{1}{\pi^{n/2}}\left[n+2e^{-t}\sum_{i=1}^ny_i(x_i-y_ie^{-t})-\frac{2e^{-2t}}{1-e^{-2t}}|x-ye^{-t}|^2\right] \\
 & \cdot \frac{e^{-nt}}{(1-e^{-2t})^{n/2 +1}} e^{-|x-e^{-t}y|^2/(1-e^{-2t})},\;\;\;\;x,y\in\mathbb R^n\;\mbox{and}\;t>0.
\end{align*}
Taking $e^{-t}=r$, we obtain

\begin{align*}
 \partial_t T_t^{\mathcal A}(x,y)_{|t=-\log r}= & -\frac{1}{\pi^{n/2}}\left[n(1-r^2)+2r\sum_{i=1}^ny_i(x_i-y_ir)(1-r^2)-2r^2|x-yr|^2\right] \\
 & \cdot \frac{r^n}{(1-r^2)^{n/2 +2}}e^{-|x-ry|^2/(1-r^2)},\;\;\;\;x,y\in\mathbb R^n\;\mbox{and}\;r\in (0,1).
\end{align*}
Let $x=(x_1,...,x_n),\; y=(y_1,...,y_n)\in\mathbb R^n$. The function

$$P_{x,y}(r)=n(1-r^2)+2r(1-r^2)\sum_{i=1}^ny_i(x_i-ry_i)-2r^2\sum_{i=1}^n(x_i-ry_i)^2,\;\;\;\;r\in(0,1),$$
is a polynomial of, at most, fourth degree. Then, the sign of $P_{x,y}(r)$ change at most four times. Following the idea given in \cite[p. 286]{GCMST2}, we have that

\begin{align}\label{2.8}
 H(x,y)= & \frac{1}{\pi^{n/2}}\int_0^1|P_{x,y}(r)|\frac{r^{n-1}}{(1-r^2)^{n/2+2}}e^{\frac{-|x-yr|^2}{1-r^2}}dr \nonumber\\
 & \leq C\sup_{t>0}|T_t^{\mathcal A}(x,y)|,\;\;\;\;x,y\in\mathbb R^n.
\end{align}
By sing \cite[Lemma 3.3.3]{Sa} we obtain

$$\sup_{t>0}|T_t^{\mathcal A}(x,y)|\leq C e^{|y|^2-|x|^2}[(1+|x|)^n\wedge (|x|\sin\theta(x,y))^{-n}],\;\;\;(x,y)\in N_1^c,$$
where $\theta(x,y)\in[0,\pi]$ is the angle between $x$ and $y$ being $x,y\in\mathbb R^n\setminus\{0\}$, and $n>1$ and $\theta(x,y)=0$, in other cases.

From \cite[Lemma 3.3.4]{Sa} we deduce that the operator $V_{\rho,glob}(\{T_t^{\mathcal A}\}_{t>0})$ is bounded from $L^1(\mathbb R^n,\gamma_{-1})$ into $L^{1,\infty}(\mathbb R^n,\gamma_{-1})$.

Hence,  $V_{\rho}(\{T_t^{\mathcal A}\}_{t>0})$ is bounded from $L^1(\mathbb R^n,\gamma_{-1})$ into $L^{1,\infty}(\mathbb R^n,\gamma_{-1})$.

Suppose now that $\beta\in(0,1]$. According to (\ref{2.2.1}) and  (\ref{2.2.2}) we get

$$\int_0^\infty|\partial_t(t^\beta\partial_t^\beta T_t^{\mathcal A}(x,y))|dt\leq C\left(\int_0^\infty|\partial_t T_t^{\mathcal A}(x,y)|dt+\int_0^\infty t|\partial_t^2 T_t^{\mathcal A}(x,y))|dt\right),\;\;\;\;x,y\in\mathbb R^n.$$

We need to estimate

$$H(x,y)=\int_0^\infty t|\partial_t^2 T_t^{\mathcal A}(x,y))|dt,\;\;\;\;x,y\in N_1^c.$$
By (\ref{2.1}) we obtain

\begin{align*}
& \partial_t^2 T_t^{\mathcal A}(x,y)  = e^{|y|^2-|x|^2}e^{-nt}\frac{e^{-|y-xe^{-t}|^2/(1-e^{-2t})}}{(1-e^{-2t})^{n/2}}\sum_{j=0}^2 \left(\begin{array}{c} 2 \\j \end{array}\right)(-n)^{2-j}\sum_{|\ell|=j}\left(\begin{array}{c} j \\ \ell_1,...,\ell_n \end{array}\right) \prod_{i=1}^n\sum_{m_i=0}^{\ell_i}\sum_{d_i=0}^{m_i}2^{-m_i} \\
& \cdot \left\{\begin{array}{c} \ell_i \\ m_i \end{array}\right\} \left(\begin{array}{c} m_i \\ d_i \end{array}\right)(-1)^{m_i-d_i}\left(\frac{e^{-t}}{\sqrt{1-e^{-2t}}}\right)^{2m_i-d_i}H_{d_i}(x_i)H_{2m_i-d_i}\left(\frac{x_ie^{-t}-y_i}{\sqrt{1-e^{-2t}}}\right),\;\;\;\;x,y\in\mathbb R^n,\;\mbox{and}\;t>0.
\end{align*}
We have to estimate, for $j=1,2,...,6$, the functions

$$H_j(x,y)=\int_0^\infty te^{|y|^2-|x|^2}e^{-nt}\frac{e^{-|y-xe^{-t}|^2/(1-e^{-2t})}}{(1-e^{-2t})^{n/2}}h_j(t,x,y)dt,\;\;\;\;x,y\in N_1^c,$$
where, for every $x,y\in\mathbb R^n$ and $t>0$,

\begin{align*}
& h_1(t,x,y)=1,\;\;h_2(t,x,y)=\left(\frac{e^{-t}}{\sqrt{1-e^{-2t}}}\right)^2\left(1+\left|\frac{xe^{-t}-y}{\sqrt{1-e^{-2t}}}\right|^2\right), \\
 & h_3(t,x,y)=\frac{e^{-t}}{\sqrt{1-e^{-2t}}}|x|\left(1+\left|\frac{xe^{-t}-y}{\sqrt{1-e^{-2t}}}\right|\right), \\
 & h_4(t,x,y)=\left(\frac{e^{-t}}{\sqrt{1-e^{-2t}}}\right)^4\left(1+\left|\frac{xe^{-t}-y}{\sqrt{1-e^{-2t}}}\right|^4\right), \\
 & h_5(t,x,y)=\left(\frac{e^{-t}}{\sqrt{1-e^{-2t}}}\right)^3|x|\left(1+\left|\frac{xe^{-t}-y}{\sqrt{1-e^{-2t}}}\right|^3\right), \\
 & h_6(t,x,y)=\left(\frac{e^{-t}}{\sqrt{1-e^{-2t}}}\right)^2|x|^2\left(1+\left|\frac{xe^{-t}-y}{\sqrt{1-e^{-2t}}}\right|^2\right).
\end{align*} \newline

{\bf (i)} By making the change of variable $t=\log\frac{1+s}{1-s}\in(0,\infty)$ we get

$$H_4(x,y)\leq C e^{|y|^2-|x|^2}\int_0^1\log\frac{1+s}{1-s}(1-s)^{n+3}\frac{e^{-c|y(1+s)-x(1-s)|^2/s}}{s^{n/2 +2}}ds.$$
As in the proof of \cite[Lemma 3.3.3]{Sa}, it follows that

\begin{align*}
 \int_0^{\frac{1}{8(1+|x|)^2}} & \log\frac{1+s}{1-s}(1-s)^{n+3}\frac{e^{-c|y(1+s)-x(1-s)|^2/s}}{s^{n/2 +2}}ds\\
 & \leq C \int_0^{\frac{1}{8(1+|x|)^2}} \frac{e^{-c|y(1+s)-x(1-s)|^2/s}}{s^{n/2 +1}}ds \\
 & \leq C \int_0^{\frac{1}{8(1+|x|)^2}} \frac{e^{-c/[s(1+|x|)^2]}}{s^{n/2 +1}}ds \leq C (1+|x|)^n,\;\;\;\;(x,y)\notin N_1.
\end{align*}
On the other hand

\begin{align*}
 \int_{\frac{1}{8(1+|x|)^2}}^1 & \log\frac{1+s}{1-s}(1-s)^{n+3}\frac{e^{-c|y(1+s)-x(1-s)|^2/s}}{s^{n/2 +2}}ds\\
 & \leq C \int_{\frac{1}{8(1+|x|)^2}}^1 \log\frac{1+s}{1-s}\frac{(1-s)^{n+3}}{s^{n/2 +2}}ds \\
 & \leq C \int_{\frac{1}{8(1+|x|)^2}}^1 \frac{1}{s^{n/2 +1}}ds \leq C (1+|x|)^n,\;\;\;\;(x,y)\notin N_1.
\end{align*}
Hence,

$$H_4(x,y)\leq C e^{|y|^2-|x|^2}(1+|x|)^n,\;\;\;\;(x,y)\notin N_1.$$
Since $|(1+s)y-(1-s)x|\geq(1-s)|x|\sin\theta(x,y)$, $x,y\in\mathbb R^n$ and $s\in (0,1)$, we obtain

\begin{align*}
& H_4(x.y)\leq C e^{|y|^2-|x|^2}\int_0^1  \log\frac{1+s}{1-s}(1-s)^{n+3}\frac{e^{-c|y(1+s)-x(1-s)|^2/s}}{s^{n/2 +2}}ds \\
 & \leq C e^{|y|^2-|x|^2}\left( \int_0^{1/2}  \frac{e^{-c(|x|(1-s)\sin(\theta(x,y)))^2/s}}{s^{n/2 +1}}ds+\int_{1/2}^1  e^{-c((1-s)|x|\sin\theta(x,y))^2}(1-s)^{n+2}ds\right) \\
 & \leq C e^{|y|^2-|x|^2}(|x|\sin\theta(x,y))^{-n},\;\;\;\;(x,y)\notin N_1.
\end{align*}
We conclude that

$$H_4(x,y)\leq C e^{|y|^2-|x|^2}((1+|x|)^n\wedge (|x|\sin\theta(x,y))^{-n}),\;\;\;\;(x,y)\notin N_1.$$ \newline

{\bf (ii)} By proceeding as in {\bf (i)}  we can see that

$$H_2(x,y)\leq C e^{|y|^2-|x|^2}((1+|x|)^n\wedge (|x|\sin\theta(x,y))^{-n}),\;\;\;\;(x,y)\notin N_1.$$ \newline

{\bf (iii)} We have that

$$H_6(x,y)\leq C e^{|y|^2-|x|^2}\int_0^\infty t e^{-(n+2)t}\frac{e^{-c|y-xe^{-t}|^2/(1-e^{-2t})}}{(1-e^{-2t})^{n/2+1}}dt\;|x|^2\;\;\;\;(x,y)\in\mathbb R^n.$$
We use some ideas developed in \cite{BrSj}. Firstly we study the integral extended over $(\log 2,\infty)$. We make the change of variable $r=e^{-t}\in(0,1/2)$. We get, as in \cite[p. 11]{BrSj},

\begin{align*}
\int_{\log 2}^\infty t & e^{-(n+2)t}\frac{e^{-c|y-xe^{-t}|^2/(1-e^{-2t})}}{(1-e^{-2t})^{n/2+1}}dt\;|x|^2 \\
 & \leq C \int_0^{1/2} (-\log r) r^{n+1} e^{-c|y-xr|^2/(1-r^2)}dr\;|x|^2 \\
 & \leq C \int_0^{1/2}  r^{n-1} e^{-c|y-xr|^2}dr\;|x|^2  \\
 & \leq C \left\{\begin{array}{l} |x|^{1-n},\;\;\;\;|y|\geq2|x|, \\  \\ e^{-|y_{\perp}|^2}|x|\left(\frac{|y|}{|x|}\right)^{n-1}+|x|^{2-n},\;\;\;\;|y|<2|x|. \end{array}\right.
\end{align*}
Here, if $x,y\in\mathbb R^n$ we decompose $y=y_\perp +y_x$, where $y_x$ is parallel to $x$ and $y_\perp$ is orthogonal to $x$.

We now study the integral extended to $(0,\log 2)$ adapting the arguments in \cite[pages 12 and 13]{BrSj}.

For every $x\in\mathbb R^n\setminus\{0\}$, $y\in\mathbb R^n$, we define $r_0=\frac{|y|}{|x|}\cos\theta(x,y)$. Suppose that $r_0\leq 1/3$. We have that

\begin{align*}
\int_0^{\log 2} t & e^{-(n+2)t}\frac{e^{-c|y-xe^{-t}|^2/(1-e^{-2t})}}{(1-e^{-2t})^{n/2+1}}dt\;|x|^2 \leq C \int_{1/2}^1 (-\log r)  \frac{e^{-c|y-xr|^2/(1-r)}}{(1-r)^{n/2+1}}dr\;|x|^2 \\
 & \leq C (1+|r_0|)^2|x|^2\int_{1/2}^1 \frac{e^{-c((1+|r_0|)^2|x|^2+|y_\perp|^2)/(1-r)}}{(1-r)^{n/2}}dr \leq C|x|^{-n},\;\;\;\;(x,y)\in N_1^c,
\end{align*}
because $|y-rx|^2=|r-r_0|^2|x|^2+|y_\perp|^2$ and $|r-r_0|=r-r_0\sim 1+|r_0|$.

If $r_0\geq 2$ we can proceed in a similar way and we obtain

$$\int_0^{\log 2} t  e^{-(n+2)t}\frac{e^{-c|y-xe^{-t}|^2/(1-e^{-2t})}}{(1-e^{-2t})^{n/2+1}}dt\;|x|^2\leq C|x|^{-n},\;\;\;\;(x,y)\in N_1^c.$$

Assume now that $1/3<r_0<2$. Let $(x,y)\notin N_1$. We define $I_1=\{r\in(1/2,1):\;|r-r_0|<(1-r_0)/2\}$, where $r_0<1$. Since
 $1-r\sim 1-r_0$, we can write

 \begin{align*}
\int_{I_1}  & (-\log r)  \frac{e^{-c|y-xr|^2/(1-r)}}{(1-r)^{n/2+1}}dr\;|x|^2\leq C(1-r_0)^{-n/2}|x|^2\int_{I_1} e^{-c|x|^2\frac{|r-r_0|^2}{1-r_0}-c\frac{|y_\perp|^2}{1-r_0}} dr \\
 & \leq C ((1+|x|)^n\wedge (|x|\sin\theta(x,y))^{-n}),
\end{align*}
(see \cite[2.3.1, with a=2]{BrSj}).

We consider now $I_2=\{r\in(1/2,1):\;1-r>3|r-r_0|/2\}$. Since $|r-r_0|\sim 1-r$, we get

 \begin{align*}
\int_{I_2}  & (-\log r)  \frac{e^{-c|y-xr|^2/(1-r)}}{(1-r)^{n/2+1}}dr\;|x|^2\leq C|x|^2\int_{I_2} \frac{e^{-c(|x|^2(1-r)+\frac{|y_\perp|^2}{1-r})}}{(1-r)^{n/2}} dr \\
 & \leq C\int_{I_2} \frac{e^{-c(|x|^2(1-r)+\frac{|y_\perp|^2}{1-r})}}{(1-r)^{n/2+1}} dr\leq C ((1+|x|)^n\wedge (|x|\sin\theta(x,y))^{-n}),
\end{align*}
(see \cite[2.3.2]{BrSj}).

Finally, we define $I_3=\{r\in(1/2,1):\;1-r\leq |r-r_0|/2\vee 3(r_0 -1)/2\}$. We have that $1-r\leq C|1-r_0|$ and $|r-r_0|\sim |1-r_0|$. Since $|x-y_x|=|x||1-r_0|$, it follows that

 \begin{align*}
& \int_{I_3}   (-\log r)  \frac{e^{-c|y-xr|^2/(1-r^2)}}{(1-r^2)^{n/2+1}}dr\;|x|^2\leq C|x|^2\int_{I_3} \frac{e^{-c(|x-y_x|^2+|y_\perp|^2)/(1-r)}}{(1-r)^{n/2}} dr \\
 & \leq C(|x-y_x|+|y_\perp|)^2\int_{I_3} \frac{e^{-c(|x-y_x|^2+|y_\perp|^2)/(1-r)}}{(1-r)^{n/2+2}} dr\leq C ((1+|x|)^n\wedge (|x|\sin\theta(x,y))^{-n}),
\end{align*}
(see \cite[2.3.3, with a=2]{BrSj}).

We conclude that

\begin{align*}
  & H_6(x,y)\leq C[|x|^{1-n}\chi_{\{|y|\geq 2|x|\}}(x,y)+\left(e^{-|y_{\perp}|^2}|x|\left(\frac{|y|}{|x|}\right)^{n-1}+|x|^{2-n}\right)\chi_{\{|y|<2|x|\}}(x,y) \\
 &  +|x|^{-n} +((1+|x|)^n\wedge (|x|\sin\theta(x,y))^{-n})]e^{|y|^2-|x|^2},\;\;\;\;(x,y)\notin N_1.
\end{align*} \newline

{\bf (iv)} By arguing as in {\bf (iii)} we obtain

\begin{align*}
  & H_1(x,y)+H_3(x,y)+H_5(x,y) \\
  & \leq C[|x|^{1-n}\chi_{\{|y|\geq 2|x|\}}(x,y)+\sum_{j=0}^1\left(e^{-|y_{\perp}|^2}|x|^j\left(\frac{|y|}{|x|}\right)^{n-1}+|x|^{1+j-n}\right)\chi_{\{|y|<2|x|\}}(x,y) \\
 &  +|x|^{-n} +((1+|x|)^n\wedge (|x|\sin\theta(x,y))^{-n})]e^{|y|^2-|x|^2},\;\;\;\;(x,y)\notin N_1.
\end{align*}

Putting together all the above estimates we obtain

\begin{align*}
  & \int_0^\infty|\partial_t(t^\beta\partial_t^\beta  T_t^{\mathcal A}(x,y)|dt \\
  & \leq C\Big(|x|^{1-n}\chi_{\{|y|\geq 2|x|\}}(x,y)+\sum_{j=0}^1\left(e^{-|y_{\perp}|^2}|x|^j\left(\frac{|y|}{|x|}\right)^{n-1}+|x|^{1+j-n}\right)\chi_{\{|y|<2|x|\}}(x,y) \\
 & +|x|^{-n} +((1+|x|)^n\wedge (|x|\sin\theta(x,y))^{-n})\Big) e^{|y|^2-|x|^2},\;\;\;\;(x,y)\in N_1^c.
\end{align*}

According to \cite[Lemma 3.3.4]{Sa} and \cite[Lemmas 4.2, 4.3 and 4.4]{BrSj} we conclude that $V_{\rho,glob}(\{t^\beta\partial_t^\beta  T_t^{\mathcal A}\}_{t>0})$ is bounded from $L^1(\mathbb R^n, \gamma_{-1})$ into $L^{1,\infty}(\mathbb R^n, \gamma_{-1})$.

It follows that $V_{\rho}(\{t^\beta\partial_t^\beta  T_t^{\mathcal A}\}_{t>0})$ is bounded from $L^1(\mathbb R^n, \gamma_{-1})$ into $L^{1,\infty}(\mathbb R^n, \gamma_{-1})$.

According to (\ref{eq1.1}) from the $L^p$-boundedness properties of $V_{\rho}(\{t^\beta\partial_t^\beta  T_t^{\mathcal A}\}_{t>0})$, that we have just proved, follow the corresponding ones for the family  $\{\lambda\Lambda(\{t^\beta\partial_t^\beta  T_t^{\mathcal A}\}_{t>0},\lambda)^{1/\rho}\}_{\lambda >0}$.

On the other hand, we have that

\begin{align*}
  & \mathcal O(\{t^\beta\partial_t^\beta  T_t^{\mathcal A}\}_{t>0},\{t_j\}_{j=1}^\infty)(f)(x)=\left(\sum_{i=1}^\infty\sup_{t_{i+1}\leq\epsilon_{i+1}<\epsilon_i\leq t_i}|t^\beta\partial_t^\beta  T_t^{\mathcal A}(f)(x)_{|t=\epsilon_i}-t^\beta\partial_t^\beta  T_t^{\mathcal A}(f)(x)_{|t=\epsilon_{i+1}}|^2\right)^{1/2} \\
  & \leq\int_{\mathbb R^n}|f(y)|\int_0^\infty|\partial_t(t^\beta\partial_t^\beta  T_t^{\mathcal A}(x,y))|dtdy.
\end{align*}
Also, for every $k\in\mathbb N$,

$$V_k(\{t^\beta\partial_t^\beta  T_t^{\mathcal A}\}_{t>0})(f)(x)\leq \int_{\mathbb R^n}|f(y)|\int_{2^{-k}}^{2^{-k+1}}|\partial_t(t^\beta\partial_t^\beta  T_t^{\mathcal A}(x,y))|dtdy,$$
and then

$$S_V(\{t^\beta\partial_t^\beta  T_t^{\mathcal A}\}_{t>0})(f)(x)\leq \int_{\mathbb R^n}|f(y)|\int_0^\infty|\partial_t(t^\beta\partial_t^\beta  T_t^{\mathcal A}(x,y))|dtdy.$$
From our previous results and following the same strategy we can obtain the $L^p$- boundedness properties for the oscillation and the short variation operators associated to $\{t^\beta\partial_t^\beta  T_t^{\mathcal A}\}_{t>0}$ stated in Theorem \ref{th1.1}.

\begin{rem}
The procedure we have used to prove that the variation operators associated with $\{t^\beta\partial_t^\beta  T_t^{\mathcal A}\}_{t>0}$ are bounded from $L^1(\mathbb R^n, \gamma_{-1})$ into $L^{1,\infty}(\mathbb R^n, \gamma_{-1})$ does not work when $\beta >1$. The reason is that there appears terms with $|x|^a$, $a\geq 3$. We have not been able to get examples that allow us to prove that the mentioned operators are not bounded from $L^1(\mathbb R^n, \gamma_{-1})$ into $L^{1,\infty}(\mathbb R^n, \gamma_{-1})$. These operators are more involved than the higher order Riesz transform associated with the operator $\mathcal A$ considered by Bruno and Sjogren (\cite{BrSj}).

On the other hand in the Gaussian context, that is, when the semigroup is $\{T_t^{ou}\}_{t>0}$ as far as we know the results , as the ones we have proved, remain still unproved. In \cite{HMMT} the subordinated Poisson semigroup to $\{T_t^{ou}\}_{t>0}$ is considered only with $\beta =0$.
\end{rem}

We now study the operators associated with the Poisson semigroup $\{P_t^{\mathcal A}\}_{t>0}$ associated to $\{T_t^{\mathcal A}\}_{t>0}$.

We recall that

$$P_t^{\mathcal A}(f)=\frac{t}{2\sqrt{\pi}}\int_0^\infty\frac{e^{-t^2/4s}}{s^{3/2}}T_s^{\mathcal A}(f)ds.$$
As it is shown in \cite[Corollary 6.2]{LeX} by using the subordinate representation we can see that

$$V_\rho(\{P_t^{\mathcal A}\}_{t>0})\leq V_\rho(\{T_t^{\mathcal A}\}_{t>0}).$$
Then, it follows that $V_\rho(\{P_t^{\mathcal A}\}_{t>0})$ and $\lambda\Lambda(\{P_t^{\mathcal A}\}_{t>0},\lambda)^{1/\rho}$, $\lambda >0$, are bounded from $L^p(\mathbb R^n, \gamma_{-1})$ into itself, $1<p<\infty$, and from $L^1(\mathbb R^n, \gamma_{-1})$ into $L^{1,\infty}(\mathbb R^n, \gamma_{-1})$. We are going to prove our results that contain these ones as particular cases.

The local and global operators are defined in the usual way.

If $0<t_1<...<t_k$, $k\in\mathbb N$, we have that

\begin{align*}
  & \left(\sum_{j=1}^{k-1}\left|t^\beta\partial_t^\beta  P_t^{\mathcal A}(f)(x)_{|t=t_{j+1}}-t^\beta\partial_t^\beta  P_t^{\mathcal A}(f)(x)_{|t=t_j}\right|^{\rho}\right)^{1/\rho} \\
  & \leq\int_{\mathbb R^n}|f(y)|\int_0^\infty|\partial_t(t^\beta\partial_t^\beta  P_t^{\mathcal A}(x,y))|dtdy,\;\;\;x\in\mathbb R^n.
\end{align*}
Then,

$$V_\rho(\{t^\beta\partial_t^\beta  P_t^{\mathcal A}\}_{t>0})(f)(x)\leq \int_{\mathbb R^n}|f(y)|\int_0^\infty|\partial_t(t^\beta\partial_t^\beta  P_t^{\mathcal A}(x,y))|dtdy,\;\;\;x\in\mathbb R^n.$$
Firstly we study the global operator $V_{\rho,glob}(\{t^\beta\partial_t^\beta  P_t^{\mathcal A}\}_{t>0})$. In order to do so we need to estimate

$$H(x,y)=\int_0^\infty|\partial_t(t^\beta\partial_t^\beta  P_t^{\mathcal A}(x,y))|dt,\;\;\;\;(x,y)\in N_1^c.$$
As in the $\{T_t^{\mathcal A}\}_{t>0}$-case we get

$$H(x,y)\leq C\left(\int_0^\infty t^{m-1}|\partial_t^m P_t^{\mathcal A}(x,y))|dt+\int_0^\infty t^{m}|\partial_t^{m+1} P_t^{\mathcal A}(x,y))|dt\right),\;\;\;\;(x,y)\in \mathbb R^n,$$
where $m\in\mathbb N$ and $m-1\leq\beta <m$.

We can write

$$P_t^{\mathcal A}(f)=\frac{1}{\sqrt{\pi}}\int_0^\infty\frac{e^{-u}}{\sqrt{u}}T_{t^2/4u}^{\mathcal A}(f)du.$$
We get

\begin{align*}
 \partial_t P_t^{\mathcal A}(f)(x)= & \frac{1}{\sqrt{\pi}}\int_0^\infty \frac{e^{-u}}{\sqrt{u}}\partial_sT_s^{\mathcal A}(f)(x)_{|s=t^2/4u}\frac{t}{2u}du=\frac{1}{\sqrt{\pi}}\int_0^\infty \frac{e^{-t^2/4s}}{\sqrt{s}}\partial_sT_s^{\mathcal A}(f)(x)ds\\
  & =\int_{\mathbb R^n}f(y)\frac{1}{\sqrt{\pi}}\int_0^\infty \frac{e^{-t^2/4s}}{\sqrt{s}}\partial_sT_s^{\mathcal A}(x,y)dsdy,\;\;\;x,y\in\mathbb R^n.
\end{align*}
We have that

$$\partial_t P_t^{\mathcal A}(x,y)=\frac{1}{\sqrt{\pi}}\int_0^\infty \frac{e^{-t^2/4s}}{\sqrt{s}}\partial_sT_s^{\mathcal A}(x,y)ds,\;\;\;x,y\in\mathbb R^n.$$
By using \cite[Lemma 4]{BCCFR} we obtain

\begin{align*}
 |\partial_t^m P_t^{\mathcal A}(x,y)| & \leq C\int_0^\infty \frac{\left|\partial_t^{m-1}\left(e^{-t^2/4s}\right)\right|}{\sqrt{s}}|\partial_sT_s^{\mathcal A}(x,y)|ds\\
  & \leq C\int_0^\infty \frac{e^{-t^2/8s}}{s^{m/2}}|\partial_sT_s^{\mathcal A}(x,y)|ds,\;\;\;x,y\in\mathbb R^n.
\end{align*}
All the derivatives under the integral sign can be justified provided that $f\in L^q(\mathbb R^n,\gamma_{-1})$, $1\leq q<\infty$.

By (\ref{2.8}) and \cite[Lemma 3.3.3]{Sa} we get

\begin{align*}
 \int_0^\infty t^{m-1} & |\partial_t^m P_t^{\mathcal A}(x,y)|dt  \leq C\int_0^\infty |\partial_sT_s^{\mathcal A}(x,y)| \int_0^\infty \frac{e^{-t^2/8s}}{s^{m/2}}t^{m-1}dtds\\
  & \leq C\int_0^\infty |\partial_sT_s^{\mathcal A}(x,y)|ds \\
  & \leq C \sup_{s>0}|T_s^{\mathcal A}(x,y)|\leq  Ce^{|y|^2-|x|^2}((1+|x|)^n\wedge (|x|\sin\theta(x,y))^{-n}),\;\;\;(x,y)\in N_1^c.
\end{align*}
According to \cite[Proposition 2.1]{MPS} we also obtain

\begin{align*}
 \int_0^\infty t^{m-1} & |\partial_t^m P_t^{\mathcal A}(x,y)|dt  \leq C e^{|y|^2-|x|^2}\sup_{s>0}\frac{e^{-|y-e^{-s}x|^2/(1-e^{-2s})}}{(1-e^{-2s})^{n/2}} \\
  & \leq C \left\{\begin{array}{l} e^{-|x1^2},\;\;\;\;\;\;\;\;\mbox{if}\;\langle x,y\rangle\leq 0, \\
   \\
   |x+y|^n e^{\frac{|y|^2-|x|^2}{2}-\frac{|x-y||x+y|}{2}},\;\;\;\;\mbox{if}\;\langle x,y\rangle\geq 0,
  \end{array}\right.,\;\;\;(x,y)\in N_1^c.
\end{align*}
We conclude that

$$H(x,y)\leq C((1+|x|)^n\wedge (|x|\sin\theta(x,y))^{-n}),\;\;\;\;x\in N_1^c,$$
and

\begin{align*}
 & H(x,y) \leq C \left\{\begin{array}{l} e^{-|x1^2},\;\;\;\;\;\;\;\;\mbox{if}\;\langle x,y\rangle\leq 0, \\
   \\
   |x+y|^n e^{\frac{|y|^2-|x|^2}{2}-\frac{|x-y||x+y|}{2}},\;\;\;\;\mbox{if}\;\langle x,y\rangle\geq 0,
  \end{array}\right.,\;\;\;(x,y)\in N_1^c.
\end{align*}
It follows that $V_{\rho,glob}(\{t^\beta\partial_t^\beta P_t^{\mathcal A}\}_{t>0})$ is bounded from $L^p(\mathbb R^n,\gamma_{-1})$ into itself, for every $1<p<\infty$, and  from $L^1(\mathbb R^n,\gamma_{-1})$ into $L^{1,\infty}(\mathbb R^n,\gamma_{-1})$.

We can write

\begin{align}\label{2.11}
V_{\rho,loc}(\{t^\beta\partial_t^\beta P_t^{\mathcal A}\}_{t>0})(f)\leq V_{\rho,loc}(\{t^\beta\partial_t^\beta (P_t^{\mathcal A}-P_t)\}_{t>0})(f)+V_{\rho,loc}(\{t^\beta\partial_t^\beta P_t\}_{t>0})(f),
\end{align}
where
$$P_t(z)=\frac{\Gamma((n+1)/2)}{\pi^{(n+1)/2}}\frac{t}{(t^2+|z|^2)^{(n+1)/2}},\;\;\;\;z\in\mathbb R^n\;\mbox{and}\;t>0.$$
We have that, for certain $a_0,a_1,...,a_m\in\mathbb R$,

$$t^m\partial_t^m P_t(z)=P_t(z)\frac{a_0+a_1\left(\frac{|z|}{t}\right)^2+...+a_m\left(\frac{|z|}{t}\right)^{2m}}{\left(1+\frac{|z|^2}{t^2}\right)^{m}},\;\;\;\;z\in\mathbb R^n\;\mbox{and}\;t>0.$$
This equality can be proved by induction. As in (\ref{2.3}), but adapting $\Psi$ and $\Phi$, we obtain

$$t^\beta \partial_t^\beta P_t(z)=\frac{1}{t^n}\Psi_\beta\left(\frac{|z|}{t}\right),\;\;\;\;z\in\mathbb R^n\;\mbox{and}\;t>0.$$
where

$$\Psi_\beta(u)=\frac{1}{\Gamma(m-\beta)}\int_1^\infty \Phi_m\left(\frac{u}{v}\right)v^{-m-n}(1-v)^{m-\beta -1}dv,\;\;\;\;u>0,$$
and

$$\Phi_m(u)=\frac{1}{(1+u^2)^{(n+1)/2}}\frac{a_0+a_1u^2+...+a_mu^{2m}}{(1+u^2)^m},\;\;\;\;u>0.$$

We get

\begin{align*}
 & \int_0^\infty\Psi'_\beta(u)u^n du=\frac{1}{\Gamma(m-\beta)}\int_0^\infty u^n\int_1^\infty \Phi'_m\left(\frac{u}{v}\right)v^{-m-n-1}(1-v)^{m-\beta -1}dvdu   \\
  & =\frac{2^{n/2}}{\Gamma(m-\alpha)}\int_1^\infty\frac{(1-v)^{m-\beta -1}}{v^m}\int_0^\infty|\Phi'_m(\omega)|\omega^nd\omega dv>\infty,
\end{align*}
and $\displaystyle\lim_{u\rightarrow +\infty}\Psi_\beta(u)=0.$

We define $\displaystyle \Psi_{\beta,t}(z)=\frac{1}{t^n}\Psi_\beta\left(\frac{z}{t}\right)$, $z\in\mathbb R^n$ and $t>0$, and the convolution operator $\varphi_{\beta,t}$ by $\varphi_{\beta,t}(f)=\Psi_{\beta,t}\ast f$, $t>0$. We have that $\varphi_{\beta,t}=t^\beta\partial_t^\beta P_t$, $t>0$. According to \cite[Lemma 2.4]{CJRW1}, the operators $V_\rho(\{t^\beta\partial_t^\beta P_t\}_{t>0})$, $\mathcal O(\{t^\beta\partial_t^\beta P_t\}_{t>0},\{t_j\}_{j=1}^\infty)$, $\lambda\Lambda(\{t^\beta\partial_t^\beta P_t\}_{t>0},\lambda)^{1/\rho}$ and $S_V(\{t^\beta\partial_t^\beta P_t\}_{t>0})$ are bounded from $L^p(\mathbb R^n,dx)$ into itself, for every $1<p<\infty$, and from $L^1(\mathbb R^n,dx)$ into $L^{1,\infty}(\mathbb R^n,dx)$.

By $(F_\rho,\|\;\|_\rho)$ we continue denoting the space defined by (\ref{2.10}). We obtain

\begin{align*}
 & \|t^\beta\partial_t^\beta P_t(x-y)\|_{F_\rho}\leq C\left(\int_0^\infty t^{m-1}|\partial_t^m P_t(x-y)|dt+\int_0^\infty t^{m}|\partial_t^{m+1} P_t(x-y)|dt\right)  \\
  & \leq C\int_0^\infty |\partial_t T_t(x-y)|dt \leq C\int_0^\infty\frac{e^{-c|x-y|^2/t}}{t^{(n+2)/2}}dt\leq\frac{C}{|x-y|^n},\;\;\;\;x,y\in\mathbb R^n.
\end{align*}

We can prove (see \cite[Propositions 3.2.5 and 3.2.7]{Sa} and \cite[p. 10]{HTV}) that the operator $V_{\rho,loc}(\{t^\beta\partial_t^\beta P_t\}_{t>0})$ is bounded from $L^p(\mathbb R^n,\gamma_{-1})$ into itself, for every $1<p<\infty$, and from $L^1(\mathbb R^n,\gamma_{-1})$ into $L^{1,\infty}(\mathbb R^n,\gamma_{-1})$.

We now study the operator $V_{\rho,loc}(\{t^\beta\partial_t^\beta (P_t^{\mathcal A}-P_t)\}_{t>0})$. We get

$$V_{\rho,loc}(\{t^\beta\partial_t^\beta (P_t^{\mathcal A}-P_t)\}_{t>0})(f)(x)\leq \int_{\mathbb R^n} f(y)K(x,y) dy, \;\;\;\;x\in\mathbb R^n,$$
where

$$K(x,y)=\int_0^\infty|\partial_t(t^\beta\partial_t^\beta (P_t^{\mathcal A}-P_t))(x,y)|dt,\;\;\;\;x,y\in\mathbb R^n.$$
We have that

$$K(x,y)\leq C\left(\int_0^\infty t^{m-1}|\partial_t^m (P_t^{\mathcal A}-P_t))(x,y)|dt+\int_0^\infty t^{m}|\partial_t^{m+1} (P_t^{\mathcal A}-P_t))(x,y)|dt\right),\;\;\;\;x,y\in\mathbb R^n.$$
Let $\ell\in\mathbb N$. By using \cite[Lemma 4]{BCCFR} we obtain

\begin{align*}
 & \int_0^\infty t^{\ell-1}|\partial_t^\ell (P_t^{\mathcal A}- P_t)(x-y)|dt\leq C \int_0^\infty t^{\ell -1}\int_0^\infty\frac{|\partial_t^{\ell}e^{-t^2/4s}|}{s^{3/2}}|T_s^{\mathcal A}(x,y)-T_s(x,y)|dsdt  \\
  & \leq C\int_0^\infty \frac{| T_s^{\mathcal A}(x.y)-T_s(x,y)|}{s^{3/2}}\int_0^\infty t^{\ell-1}\frac{e^{-t^2/8s}}{s^{(\ell -1)/2}} dtds \\
  & \leq C\int_0^\infty\frac{| T_s^{\mathcal A}(x.y)-T_s(x,y)|}{s}ds,\;\;\;\;x,y\in\mathbb R^n.
\end{align*}
Then,

$$K(x,y)\leq C\int_0^\infty\frac{| T_s^{\mathcal A}(x.y)-T_s(x,y)|}{s}ds,\;\;\;\;x,y\in\mathbb R^n.$$
For $x\in\mathbb R^n$, recall that $m(x)=\min\{1,1/|x|^2\}$. We have that

\begin{align*}
 & \int_{m(x)}^\infty \frac{| T_t^{\mathcal A}(x.y)-T_t(x,y)|}{t}dt\leq C\left( \int_{m(x)}^\infty e^{-nt}\frac{e^{-|x-ye^{-t}|^2/(1-e^{-2t})}}{(1-e^{-2t})^{n/2}}\frac{dt}{t}+\int_{m(x)}^\infty\frac{e^{-|x-y|^2/2t}}{t^{n/2+1}}dt\right)  \\
  & \leq C\int_{m(x)}^\infty \frac{1}{t^{n/2+1}}dt \leq C\frac{1}{m(x)^{n/2}}\leq C\frac{(1+|x|)^{1/2}}{|x-y|^{n-1/2}},\;\;\;\;(x,y)\in N_1.
\end{align*}

On the other hand, we can write

\begin{align*}
 & T_t^{\mathcal A}(x.y)-T_t(x,y)= (e^{-nt}-1)\frac{e^{-|x-ye^{-t}|^2/(1-e^{-2t})}}{(1-e^{-2t})^{n/2}}  +\left(\frac{1}{(1-e^{-2t})^{n/2}}-\frac{1}{(2t)^{n/2}}\right)e^{-|x-ye^{-t}|^2/(1-e^{-2t})}  \\
 & +\frac{1}{(2t)^{n/2}}\left(e^{-|x-ye^{-t}|^2/(1-e^{-2t})}-e^{-|x-y|^2/2t}\right)  ,\;\;\;\;x,y\in \mathbb R^n\;\mbox{and}\;t>0.
\end{align*}
By proceeding as in the first part of this proof we get

$$\left|T_t^{\mathcal A}(x.y)-T_t(x,y)\right|_{|t=\log\frac{1+s}{1-s}}\leq C e^{-c|x-y|^2/s}\left(\frac{1}{s^{n/2-1}}+\frac{(1+|x|)^{1/2}}{s^{n/2-1/4}}\right),\;\;\;\;(x,y)\in N_1\;\mbox{and}\; 0<t<m(x).$$
Then,

\begin{align*}
  \int_0^{m(x)} & \frac{| T_t^{\mathcal A}(x.y)-T_t(x,y)|}{t}dt\leq C \int_0^1 e^{-c|x-y|^2/s}\left(\frac{1}{s^{n/2-1}}+\frac{(1+|x|)^{1/2}}{s^{n/2-1/4}}\right)ds  \\
  & \leq C\frac{(1+|x|)^{1/2}}{|x-y|^{n-1/2}},\;\;\;\;(x,y)\in N_1.
\end{align*}
We conclude that

$$K(x,y)\leq C \frac{(1+|x|)^{1/2}}{|x-y|^{n-1/2}},\;\;\;\;(x,y)\in N_1,$$
and

$$V_{\rho,loc}(\{t^\beta\partial_t^\beta(P_t^{\mathcal A}-P_t)\}_{t>0})(f)(x)\leq C\int_{\mathbb R^n}|f(y)|\chi_{N_1}(x,y)\frac{(1+|x|)^{1/2}}{|x-y|^{n-1/2}} dy,\;\;\;\;x\in\mathbb R^n.$$
It follows that $V_{\rho,loc}(\{t^\beta\partial_t^\beta(P_t^{\mathcal A}-P_t)\}_{t>0})$ is bounded from $L^p(\mathbb R^n,\gamma_{-1})$ into itself, for every $1\leq p<\infty$.

From (\ref{2.11}) we deduce that $V_{\rho,loc}(\{t^\beta\partial_t^\beta P_t^{\mathcal A})\}_{t>0})$ is bounded from $L^p(\mathbb R^n,\gamma_{-1})$ into itself, for every $1<p<\infty$, and from $L^1(\mathbb R^n,\gamma_{-1})$ into $L^{1,\infty}(\mathbb R^n,\gamma_{-1})$.

Thus, we have proved that $V_{\rho}(\{t^\beta\partial_t^\beta P_t^{\mathcal A})\}_{t>0})$ is bounded from $L^p(\mathbb R^n,\gamma_{-1})$ into itself, for every $1<p<\infty$, and from $L^1(\mathbb R^n,\gamma_{-1})$ into $L^{1,\infty}(\mathbb R^n,\gamma_{-1})$.

The boundedness of the other operators can be done as in the $\{T_t^{\mathcal A}\}_{t>0}$-case.

\subsection{Proof of the Theorem 1.2} We are going to use the same arguments developed in the proof of Theorem \ref{th1.1}, together with some results in \cite{CJRW1} and \cite{HM2}.

\noindent{\bf (a)} According to \cite[Lemma 4.2 and Remark 4.3]{HM2} the operator $V_\rho^X(\{T_t\}_{t>0})$ is bounded from $L_X^p(\mathbb R^n,dx)$ into  $L^p(\mathbb R^n,dx)$ for every $1<p<\infty$. We define the Banach space (modulus constant functions) $F_\rho^X$ as follows. $F_\rho^X$ consists of all those functions $g:(0,\infty)\rightarrow X$   such that

$$\|g\|_{F_\rho^X}:=\sup_{\begin{array}{c} 0<t_1<...<t_k \\ k\in\mathbb N \end{array}}\left(\sum_{j=1}^{k-1}\|g(t_{j+1})-g(t_j)\|^\rho\right)^{1/\rho}<\infty.$$

We have that

$$V_\rho^X(\{T_t\}_{t>0})(f)(x)=\|T_{\centerdot}(f)(x)\|_{F_\rho^X},\;\;\;\;x\in\mathbb R^n.$$
Reproducing the proof of \cite[Lemma 2.4]{CJRW1} by (\ref{2.3}) we can see that $V_\rho^X(\{t^\alpha\partial_t^\alpha T_t\}_{t>0})$ is bounded from $L_X^1(\mathbb R^n,dx)$ into  $L^{1,\infty}(\mathbb R^n,dx)$ and from $L_X^p(\mathbb R^n,dx)$ into  $L^p(\mathbb R^n,dx)$ for every $1<p<\infty$, by using $\{\varphi_{\beta,t}\}_{t>0}$  (see (\ref{2.15})).

We deduce from the proof of Theorem \ref{th1.1} that there exists, for every $1\leq p<\infty$, measurable functions $K_1$ and $K_2$ defined in $\mathbb R^n\times\mathbb R^n$ such that, for every $f\in L^p(\mathbb R^n,\gamma_{-1})$, $1\leq p<\infty$,

$$V_{\rho,glob}^X(\{t^\alpha\partial_t^\alpha T_t^{\mathcal A}\}_{t>0})(f)(x)\leq\int_{\mathbb R^n}\|f(y)\|K_1(x,y)\chi_{N_1^c}(x,y)dy,\;\;\;\;x\in\mathbb R^n,$$
and

$$V_{\rho,loc}^X(\{t^\alpha\partial_t^\alpha(T_t^{\mathcal A}- T_t\}_{t>0})(f)(x)\leq\int_{\mathbb R^n}\|f(y)\|K_2(x,y)\chi_{N_1}(x,y)dy,\;\;\;\;x\in\mathbb R^n,$$
being, for $j=1,2$, the operator $\mathbb K_j$ defined by

$$\mathbb K_j(\mathfrak{g})(x)=\int_{\mathbb R^n}\mathfrak{g}(y)K_j(x,y)dy,\;\;\;\;x\in\mathbb R^n,$$
bounded from $L^p(\mathbb R^n,\gamma_{-1})$ into itself. Furthermore, $\mathbb K_1$ (respectively $\mathbb K_2$) is bounded from $L^1(\mathbb R^n,\gamma_{-1})$ into $L^{1,\infty}(\mathbb R^n,\gamma_{-1})$, for every $\beta\in [0,1]$ (respectively $\beta\geq 0$).

By arguing as in the proof of Theorem \ref{th1.1} we conclude that $V_{\rho}^X(\{t^\beta\partial_t^\beta T_t^{\mathcal A}\}_{t>0})$ is bounded from $L^p_X(\mathbb R^n,\gamma_{-1})$ into $L^p(\mathbb R^n,\gamma_{-1})$ for every $1<p<\infty$ and $\beta\geq 0$, and from $L^1_X(\mathbb R^n,\gamma_{-1})$ to $L^{1,\infty}(\mathbb R^n,\gamma_{-1})$, when $\beta\in[0,1]$.\newline

\noindent{\bf (b)} The Poisson case can be established in a similar way.

\begin{rem}
The arguments used in the proof of Theorem \ref{th1.2} allow us to prove analogous results for the vector valued oscillation, short variation and jump operators associated with $\{t^\beta\partial_t^\beta T_t^{\mathcal A}\}_{t>0}$ and $\{t^\beta\partial_t^\beta P_t^{\mathcal A}\}_{t>0}$ when $\beta\geq 0$.
\end{rem}

\section{Proofs of Theorems \ref{th1.3}, \ref{th1.4} and \ref{th1.5} and Corollary \ref{cor1.6}}

\subsection{Proof of Theorem \ref{th1.3}} Let $i=1,...,n.$ If $0<\epsilon_1<\epsilon_2<...<\epsilon_k$, $k\in\mathbb N$, we have that

\begin{align}\label{3.1}
&  \left(\sum_{j=1}^{k-1}\left|\mathcal R_{i,\epsilon_{j+1}}^{\mathcal A}(f)(x)-\mathcal R_{i,\epsilon_{j}}^{\mathcal A}(f)(x)\right|^\rho\right)^{1/\rho}=\left(\sum_{j=1}^{k-1}\left|\int_{\epsilon_j<|x-y|<\epsilon_{j+1}}\mathcal R_i^{\mathcal A}(x,y)f(y)dy\right|^\rho\right)^{1/\rho}  \nonumber\\
  & \leq \sum_{j=1}^{k-1}\left(\left|\int_{\epsilon_j<|x-y|<\epsilon_{j+1}}\mathcal R_i^{\mathcal A}(x,y)f(y)dy\right|^\rho\right)^{1/\rho}\leq \int_{\epsilon_1<|x-y|<\epsilon_k}|\mathcal R_i^{\mathcal A}(x,y)||f(y)|dy,\;\;\;\;x\in\mathbb R^n.
\end{align}

We understand local and global operators in the usual way. By (\ref{3.1}) we deduce that

$$V_{\rho,glob}(\{\mathcal R_{i,\epsilon}^{\mathcal A}\}_{\epsilon>0})(f)(x)\leq  \int_{\mathbb R^n}|\mathcal R_i^{\mathcal A}(x,y)|\chi_{N_\delta}^c(x,y)|f(y)|dy\;\;\;\;x\in\mathbb R^n.$$
Here $\delta >0$, and it will be specified later.

We define the classical $i$-th Riesz transform $\mathcal R_i$ by

$$\mathcal R_i(f)(x)=\lim_{\epsilon\rightarrow 0}\int_{|x-y|>\epsilon}\mathcal R_i(x,y)f(y)dy,\;\mbox{for almost all}\;\;x\in\mathbb R^n,$$
for every $f\in L^p(\mathbb R^n,dx)$, $1\leq p<\infty$, where $\mathcal R_i(x,y)$ can be written as

$$\mathcal R_i(x,y)=\frac{-2}{\pi^{\frac{n+1}{2}}}\int_0^\infty\frac{x_i-y_i}{(2t)^{\frac{n+2}{2}}}e^{-|x-y|^2/2t}dt,\;\;\;\;x,y\in\mathbb R^n.$$

As in (\ref{3.1}) we again get

\begin{align*}
   \left|\left(V_{\rho,loc}(\{\mathcal R_{i,\epsilon}^{\mathcal A}\}_{\epsilon>0})-V_{\rho,loc}(\{\mathcal R_{i,\epsilon}\}_{\epsilon>0})\right)(f)(x)\right|  \leq \int_{\mathbb R^n}|\mathcal R_i^{\mathcal A}(x,y)-\mathcal R_i(x,y)|\chi_{N_\delta}(x,y)|f(y)|dy\;\;\;\;x\in\mathbb R^n.
\end{align*}
Let $1<p<\infty$. In \cite[(3.18) ]{B1} it was proved that

$$|\mathcal R_i^{\mathcal A}(x,y)\leq C\left\{\begin{array}{ccc} e^{-\eta|x|^2}, & \langle x,y \rangle \leq 0 & \\ & & (x,y)\in N_\delta^c, \\ |x+y|^n\exp(\eta\frac{|y|^2-|x|^2-|x-y||x+y|}{2}) & \langle x,y\rangle >0, &   \end{array}\right.$$
where $1/p<\eta<1$ and $\delta=\eta^{-1/2}$. Since

$$\sup_{x\in\mathbb R^n}\int_{\mathbb R^n}e^{\frac{|x|^2-|y|^2}{p}}|\mathcal R_i^{\mathcal A}(x,y)|\chi_{N_\delta}^c(x,y)dy<\infty,$$
and

$$\sup_{y\in\mathbb R^n}\int_{\mathbb R^n}e^{\frac{|x|^2-|y|^2}{p}}|\mathcal R_i^{\mathcal A}(x,y)|\chi_{N_\delta}^c(x,y)dx<\infty,$$
it follows that $V_{\rho,glob}(\{\mathcal R_{i,\epsilon}^{\mathcal A}\}_{\epsilon>0})$ is bounded from $L^p(\mathbb R^n,\gamma_{-1})$ into itself.

It was also proved in \cite[(3.20)]{B1} that

\begin{align}\label{3.2}
|\mathcal R_i^{\mathcal A}(x,y)-\mathcal R_i(x,y)|\leq C\frac{(1+|x|)^{1/2}}{|x-y|^{n-1/2}},\;\;\;\;(x,y)\in N_\delta.
\end{align}
Then,  $V_{\rho,loc}(\{\mathcal R_{i,\epsilon}^{\mathcal A}\}_{\epsilon>0})-V_{\rho,loc}(\{\mathcal R_{i,\epsilon}\}_{\epsilon>0})$ is bounded from $L^p(\mathbb R^n,\gamma_{-1})$ into itself.

Bruno and Sjogren proved in \cite{BrSj} that there exists positive function $K(x,y)$, $x,y\in\mathbb R^n$, such that

$$|\mathcal R_i^{\mathcal A}(x,y)|\leq K(x,y), \;\;\;\;(x,y)\in N_\delta^c,$$
and that the operator $\mathbb K$ defined by

$$\mathbb K(f)(x)=\int_{\mathbb R^n}K(x,y)\chi_{N_1^c}(x,y)f(y)dy,\;\;\;\;x\in\mathbb R^n,$$
is bounded from $L^1(\mathbb R^n,\gamma_{-1})$ into $L^{1,\infty}(\mathbb R^n,\gamma_{-1})$ then, so is $V_{\rho,glob}(\{\mathcal R_{i,\epsilon}^{\mathcal A}\}_{\epsilon>0})$.

Given that the inequality in (\ref{3.2}) holds for every $(x,y)\in N_\delta$, it follows that $V_{\rho,loc}(\{\mathcal R_{i,\epsilon}^{\mathcal A}\}_{\epsilon>0})-V_{\rho,loc}(\{\mathcal R_{i,\epsilon}\}_{\epsilon>0})$ is bounded from $L^1(\mathbb R^n,\gamma_{-1})$ into $L^{1,\infty}(\mathbb R^n,\gamma_{-1})$.

It is well-known that $\mathcal R_i$ is a Calder\'on-Zygmund operator. The results in \cite{CJRW2} (see also \cite[Theorem 1]{MTX}) established that $V_\rho(\{\mathcal R_{i,\epsilon}\}_{\epsilon>0})$ is bounded from $L^q(\mathbb R^n,dx)$ into itself, for every $1<q<\infty$, and from $L^1(\mathbb R^n,dx)$ into $L^{1,\infty}(\mathbb R^n,dx)$.

We define $\mathcal R_{i,\epsilon}(x,y)=\mathcal R_i(x,y)\chi_{\{|x-y|>\epsilon\}}(y)$, $x,y\in\mathbb R^n$, and $\epsilon>0$. If $0<\epsilon_1<\epsilon_2<...<\epsilon_k$, $k\in\mathbb N$, we have that

$$\left(\sum_{j=1}^{k-1}\left|\mathcal R_{i,\epsilon_{j+1}}(x,y)-\mathcal R_{i,\epsilon_{j}}(x,y)\right|^\rho\right)^{1/\rho}\leq\sum_{j=1}^{k-1}\left|\mathcal R_{i,\epsilon_{j+1}}(x,y)-\mathcal R_{i,\epsilon_{j}}(x,y)\right|\leq  |\mathcal R_i(x,y)|,\;\;\;\;x,y\in\mathbb R^n.$$
Then,

$$V_\rho(\{\mathcal R_{i,\epsilon}(x,y)\}_{\epsilon>0})\leq |\mathcal R_i(x,y)|\leq\frac{C}{|x-y|^n},\;\;\;\;x,y\in\mathbb R^n.$$
As above, once again, the arguments in the proof of \cite[Proposistions 3.2.5 and 3.2.7]{Sa} and \cite[p. 10]{HTV} lead to the establishment that $V_{\rho,loc}(\{\mathcal R_{i,\epsilon}\}_{\epsilon>0})$ is bounded from $L^1(\mathbb R^n,\gamma_{-1})$ into $L^{1,\infty}(\mathbb R^n,\gamma_{-1})$ and from $L^p(\mathbb R^n,\gamma_{-1})$ into itself, so  we finally have established that the same holds for $V_{\rho}(\{\mathcal R_{i,\epsilon}^{\mathcal A}\}_{\epsilon>0})$.

In the similar way we can prove the $L^p(\mathbb R^n,\gamma_{-1})$-properties for the operators  $\mathcal O(\{\mathcal R_{i,\epsilon}^{\mathcal A}\}_{\epsilon>0},\{t_j\}_{j=1}^\infty)$, $S_V(\{\mathcal R_{i,\epsilon}^{\mathcal A}\}_{\epsilon>0})$ and $\lambda\Lambda(\{\mathcal R_{i,\epsilon}^{\mathcal A}\}_{\epsilon>0},\lambda)^{1/\rho}$, $\lambda >0$.

\subsection{A Banach valued version of Theorem \ref{th1.3}}
In \cite[Theorem 1.2]{HLM} it was established that if $X$ is a Banach space with the UMD-property and of $q$-martingale cotype being $2\leq q<\rho<\infty$, then $V_\rho^X(\{H_\epsilon\}_{\epsilon >0})$ is bounded from $L_X^p(\mathbb R,dx)$ into $L^p(\mathbb R,dx)$, for every $1<p<\infty$, where for every $\epsilon >0$, $H_\epsilon$ is the $\epsilon$-truncated Hilbert transform given by

$$H_\epsilon(f)(x)=\int_{|x-y|>\epsilon}\frac{f(y)}{x-y}dy,\;\;\;\;x\in\mathbb R.$$
By proceeding as in the proof of \cite[Theorem 2.1]{CJRW2} we can deduce that $V_\rho^X(\{\mathcal R_{i,\epsilon}\}_{\epsilon >0})$ is bounded from $L_X^p(\mathbb R^n,dx)$ into $L^p(\mathbb R^n,dx)$, for every $i=1,...,n$ and  $1<p<\infty$, provided that $X$ satisfies the above properties.

We now, following the same strategy than the one used in the proof of Theorem \ref{th1.3}, can concluded that the operator  $V_\rho^X(\{\mathcal R_{i,\epsilon}^{\mathcal A}\}_{\epsilon >0})$ is bounded from $L_X^p(\mathbb R^n,\gamma_{-1})$ into $L^p(\mathbb R^n,\gamma_{-1})$, for every $i=1,...,n$ and  $1<p<\infty$, when $X$ is a Banach space with the UMD-property and $q$-martingale cotype being $2\leq q<\rho <\infty$.

\subsection{Proof of Theorem \ref{th1.4}} Let $i=1,...,n$ and $k\in\mathbb N^n$. We have that

$$\mathcal R_i^{\mathcal A}(\tilde{\mathcal H}_k)=-\frac{1}{\sqrt{n+|k|}}\tilde{\mathcal H}_{k+e_i},$$

$$\partial_{x_i}\tilde{\mathcal H}_k=-\tilde{\mathcal H}_{k+e_i},$$
and

$$P_t^{\mathcal A}(\tilde{\mathcal H}_k)=e^{-t\sqrt{n+|k|}}\tilde{\mathcal H}_k.$$

Then,

$$P_t^{\mathcal A}(\mathcal R_i^{\mathcal A}(\tilde{\mathcal H}_k))=-\frac{e^{-t\sqrt{n+|k|+1}}}{\sqrt{n+|k|}}\tilde{\mathcal H}_{k+e_i},$$
and

$$\mathcal C_{i,t}^{\mathcal A}(\tilde{\mathcal H}_k)=-\int_t^\infty e^{-s\sqrt{n+|k|}}\tilde{\mathcal H}_{k+e_i}ds=-\frac{e^{-t\sqrt{n+|k|}}}{\sqrt{n+|k|}}\tilde{\mathcal H}_{k+e_i}.$$
Note that $\mathcal C_{i,t}^{\mathcal A}(\tilde{\mathcal H}_k)\neq P_t^{\mathcal A}(\mathcal R_i^{\mathcal A}(\tilde{\mathcal H}_k))$. Then, we can not proceed as in the proof of \cite[Corollary 2.8]{CJRW1}.

We consider the Euclidean conjugation operator $Q_{t,i}$ defined by

$$Q_{i,t}(f)(x)=\frac{\Gamma(\frac{n+1}{2})}{\pi^{\frac{n+1}{2}}}\int_{\mathbb R^n}\frac{x_i-y_i}{(t^2+|x-y|^2)^{\frac{n+1}{2}}}f(y)dy,\;\;\;\;x\in\mathbb R^n,$$
for every $i=1,...,n$.

Let $i=1,...,n$, since $Q_{i,t}(f)=P_t(\mathcal R_i)(f)$, $f\in L^p(\mathbb R^n,dx)$, $1\leq p<\infty$, by using \cite[Corollary 2.8]{CJRW1} we deduce that the operators $V_\rho(\{Q_{i,t}\}_{t>0})$, $\mathcal O(\{Q_{i,t}\}_{t>0},\{t_j\}_{j=1}^\infty)$, $S_V(\{Q_{i,t}\}_{t>0})$ and $\lambda\Lambda(\{Q_{i,t}\}_{t>0,\lambda})^{1/\rho}$ are bounded from $L^p(\mathbb R^n,dx)$ into itself, for every $1<p<\infty$.

Let $1<p<\infty$ and $f\in L^p(\mathbb R^n,\gamma_{-1})$. By using subordination formula we can write

\begin{align*}
    \mathcal C_{i,t}^{\mathcal A}(f)(x)= & \int_t^\infty\partial_{x_i}\frac{s}{2\sqrt{\pi}}\int_0^\infty\frac{e^{-s^2/4u}}{u^{3/2}}T_u^{\mathcal A}(f)(x)duds=\frac{1}{2\sqrt{\pi}} \int_0^\infty\partial_{x_i}T_u^{\mathcal A}(f)(x)\frac{1}{u^{3/2}}\int_t^\infty e^{-s^2/4u}sdsdu \\
   & = \frac{1}{\sqrt{\pi}}\int_0^\infty\frac{e^{-t^2/4u}}{u^{1/2}}\partial_{x_i}T_u^{\mathcal A}(f)(x)du,\;\;\;\;x\in\mathbb R^n\;\mbox{and}\;t>0,
\end{align*}
and

$$Q_{i,t}(f)(x)=\frac{1}{\sqrt{\pi}}\int_0^\infty\frac{e^{-t^2/4u}}{u^{1/2}}\partial_{x_i}T_u(f)(x)du,\;\;\;\;x\in\mathbb R^n\;\mbox{and}\;t>0.$$

We are going to see that $V_\rho(\{\mathcal C_{i,t}^{\mathcal A}\}_{t>0})$ is bounded from $L^p(\mathbb R^n,\gamma_{-1})$ into itself. For the other operators one can proceed similarly.

We define the local and global operators  in the usual way. We have that

\begin{align*}
   V_\rho( \{\mathcal C_{i,t}^{\mathcal A}\}_{t>0})(f)\leq  V_{\rho,loc}( \{\mathcal C_{i,t}^{\mathcal A}-Q_{i,t}\}_{t>0})(f)+  V_{\rho,loc}( \{Q_{i,t}^{\mathcal A}\}_{t>0})(f)+ V_{\rho,glob}( \{\mathcal C_{i,t}^{\mathcal A}\}_{t>0})(f).
\end{align*}
We can write

$$\mathcal C_{i,t}^{\mathcal A}(f)(x)= \int_{\mathbb R^n}f(y)\;\mathcal C_{i,t}^{\mathcal A}(x,y)dy,\;\;\;\;x\in\mathbb R^n,$$
where

$$\mathcal C_{i,t}^{\mathcal A}(x,y)=\frac{1}{\sqrt{\pi}}\int_0^\infty\frac{e^{-t^2/4u}}{u^{1/2}}\partial_{x_i}T_u^{\mathcal A}(x,y)du,\;\;\;\;x,y\in\mathbb R^n\;\mbox{and}\;t>0.$$

We study firstly $V_{\rho,glob}( \{\mathcal C_{i,t}^{\mathcal A}\}_{t>0})$. We have that

$$V_{\rho,glob}( \{\mathcal C_{i,t}^{\mathcal A}\}_{t>0})(f)(x)\leq \int_{\mathbb R^n}K(x,y)\chi_{N_\delta^c}(x,y)f(y)dy,\;\;\;\;x\in\mathbb R^n,$$
where

$$K(x,y)=\int_0^\infty|\partial_t\mathcal C_{i,t}^{\mathcal A}(x,y)|dt,\;\;\;\;x,y\in\mathbb R^n.$$
We get

\begin{align*}
    K(x,y))\leq & C\int_0^\infty\frac{|\partial_{x_i}T_u^{\mathcal A}(x,y)|}{\sqrt{u}}\int_0^\infty\left|\partial_t\left[e^{-t^2/4u}\right]\right|dtdu\leq C\int_0^\infty\frac{|\partial_{x_i}T_u^{\mathcal A}(x,y)|}{\sqrt{u}}du \\
   & \leq  C \int_0^\infty \frac{e^{-nu}}{(1-e^{-2u})^{n/2}}e^{-|x-ye^{-u}|^2/(1-e^{-2u})}\frac{|x_i-y_ie^{-u}|}{1-e^{-2u}}\frac{du}{\sqrt{u}} \\
  & \leq C \int_0^\infty \frac{e^{-nu}}{(1-e^{-2u})^{(n+2)/2}}e^{-\eta|x-ye^{-u}|^2/(1-e^{-2u})}du \\
  & \leq Ce^{-\eta(|x|^2-|y|^2)}\int_0^\infty \frac{e^{-nu}}{(1-e^{-2u})^{(n+2)/2}}e^{-\eta|y-xe^{-u}|^2/(1-e^{-2u})}du,\;\;\;\;x,y\in\mathbb R^n,
\end{align*}
where $0<\eta<1$ will be chosen later.

We choose $\delta=1/\sqrt{\eta}$. proceeding as in the proof of (\ref{2.25}) we obtain

\begin{align}\label{2.29}
    K(x,y))\leq & C\left\{\begin{array}{lll}
       e^{-\eta|x|^2},  &  \mbox{if}\;\langle x,y\rangle \leq 0 & \\
         & & (x,y)\notin N_\delta. \\
        |x+y|^n exp\left(-\frac{\eta}{2}|x+y||x-y|-\frac{\eta}{2}(|x|^2-|y|^2)\right) & \mbox{if}\;\langle x,y\rangle \geq 0 &
    \end{array}  \right.
\end{align}
We have that
$$\sup_{x\in\mathbb R^n}\int_{\mathbb R^n}e^{\frac{|x|^2-|y|^2}{p}}K(x,y)\chi_{N_\delta^c}(x,y)dy<\infty\;\;\mbox{and}\;\;\sup_{y\in\mathbb R^n}\int_{\mathbb R^n}e^{\frac{|x|^2-|y|^2}{p}}K(x,y)\chi_{N_\delta^c}(x,y)dx<\infty,$$
provided that $1/p<\eta<1$.

Then, the operator $\mathbb K$ defined by

$$\mathbb K(f)(x)=\int_{\mathbb R^n}K(x,y)f(y)dy,\;\;\;\;x\in\mathbb R^n,$$
is bounded from $L^p(\mathbb R^n,\gamma_{-1})$ into itself and then, so is $V_{\rho,glob}( \{\mathcal C_{i,t}^{\mathcal A}\}_{t>0})$.

On the other hand, we can write

$$V_{\rho,loc}( \{\mathcal C_{i,t}^{\mathcal A}-Q_{i,t}\}_{t>0})(f)(x)\leq\int_{\mathbb R^n}|f(y)|H(x,y)\chi_{N_\delta}(x,y)dy,\;\;\;\;x\in\mathbb R^n,$$
where

$$H(x,y)=\int_0^\infty|\partial_t(\mathcal C_{i,t}^{\mathcal A}-Q_{i,t})(x,y)|dt,\;\;\;\;x,y\in\mathbb R^n.$$

We get

$$H(x,y)\leq C\int_0^\infty\frac{|\partial_{x_i}(T_t^{\mathcal A}-T_t)(x,y)|}{\sqrt{t}}dt,\;\;\;\;x,y\in\mathbb R^n.$$

We have that

\begin{align*}
   & \partial_{x_i}(T_t^{\mathcal A}-T_t)(x,y)=\frac{2e^{-nt}}{\pi^{n/2}}\frac{x_i-y_ie^{-t}}{(1-e^{-2t})^{\frac{n}{2}+1}}e^{-\frac{|x-ye^{-t}|^2}{1-e^{-2t}}} - \frac{2}{\pi^{n/2}}\frac{x_i-y_i}{(2t)^{\frac{n}{2}+1}}e^{-\frac{|x-y|^2}{2t}} \\
  & =\frac{2}{\pi^{n/2}}\left(\frac{(e^{-nt}-1)(x_i-y_ie^{-t})}{(1-e^{-2t})^{\frac{n}{2}+1}}e^{-\frac{|x-ye^{-t}|^2}{1-e^{-2t}}}+\left(\frac{1}{(1-e^{-2t})^{\frac{n}{2}+1}}-\frac{1}{(2t)^{\frac{n}{2}+1}}\right)(x_i-y_ie^{-t})e^{-\frac{|x-ye^{-t}|^2}{1-e^{-2t}}}\right. \\
  & \left.+ \frac{x_i-y_ie^{-t}-x_i+y_i}{(2t)^{\frac{n}{2}+1}}e^{-\frac{|x-ye^{-t}|^2}{1-e^{-2t}}}+ \frac{x_i-y_i}{(2t)^{\frac{n}{2}+1}}\left(e^{-\frac{|x-ye^{-t}|^2}{1-e^{-2t}}}-e^{-\frac{|x-y|^2}{2t}} \right) \right),\;\;\;\;x,y\in\mathbb R^n\;\mbox{and}\;t>0.
\end{align*}
Using again the definition of $m(x)=\min\{1,1/|x|^2\}$. We obtain

\begin{align*}
    \int_{m(x)}^\infty|\partial_{x_i}(T_t^{\mathcal A}-T_t)(x,y)|\frac{dt}{\sqrt{t}}\leq C\int_{m(x)}^\infty\frac{1}{t^{\frac{n}{2}+1}}dt\leq\frac{C}{m(x)^{n/2}} \leq C\frac{(1+|x|)^{1/2}}{|x-y|^{n-1/2}},\;\;\;\;(x,y)\in N_\delta.
\end{align*}

By making the change of variables $\displaystyle t=\log\frac{1+s}{1-s}\in(0,\infty)$ and by proceeding as in the proof of Theorem \ref{th1.1} we get

\begin{align*}
    |\partial_{x_i}(T_t^{\mathcal A}-T_t)(x,y)|_{|t=\log\frac{1+s}{1-s}}\leq C\left(\frac{1}{s^{\frac{n-1}{2}}}+\frac{(1+|y|)^{1/2}}{s^{n/2+1/4}}\right)e^{-c|x-y|^2/s},\;\;\;\;(x,y)\in N_\delta\;\mbox{and}\;0<t<m(x).
\end{align*}
Then,

$$\int_0^{m(x)}|\partial_{x_i}(T_t^{\mathcal A}-T_t)(x,y)|\frac{dt}{\sqrt{t}}\leq C\frac{(1+|y|)^{1/2}}{|x-y|^{n-1/2}},\;\;\;\;(x,y)\in N_\delta.$$
We conclude that

\begin{align}\label{2.30}
   H(x,y)\leq C\frac{(1+|x|)^{1/2}}{|x-y|^{n-1/2}},\;\;\;\;(x,y)\in N_\delta.
\end{align}
It follows that the operator $V_{\rho,loc}( \{\mathcal C_{i,t}^{\mathcal A}-Q_{i,t}\}_{t>0})$ is bounded from $L^p(\mathbb R^n,\gamma_{-1})$ into itself.

We define

$$Q_{i,t}(x,y)=\frac{x_i-y_i}{(t^2+|x-y|^2)^{\frac{n+1}{2}}},\;\;\;\;(x,y)\in\mathbb R^n\;\mbox{and}\;t>0.$$
We have that

\begin{align*}
    V_{\rho}( \{Q_{i,t}\}_{t>0}) & \leq \int_0^\infty|\partial_tQ_{i,t}(x,y)|dt\leq C|x_i-y_i|\int_0^\infty\frac{t}{(t^2+|x-y|^2)^{\frac{n+3}{2}}}dt \\
   & \leq \frac{C}{|x-y|^n},\;\;\;\;x,y\in \mathbb R^n.
\end{align*}
Since $V_{\rho}( \{Q_{i,t}\}_{t>0})$ is bounded from $L^p(\mathbb R^n,dx)$ into itself, the arguments in the proof of \cite[Proposition 3.2.7]{Sa} allow us to prove that $V_{\rho,loc}( \{Q_{i,t}\}_{t>0})$  is bounded from $L^p(\mathbb R^n,dx)$ into itself. Then, \cite[Proposition 3.2.5]{Sa} implies that $V_{\rho,loc}( \{Q_{i,t}\}_{t>0})$ is bounded from $L^p(\mathbb R^n,\gamma_{-1})$ into itself.

We conclude that $V_{\rho}( \{\mathcal C_{i,t}\}_{t>0})$ is bounded from $L^p(\mathbb R^n,\gamma_{-1})$ into itself.

\subsection{An auxiliary result} We prove a higher dimension version of the result established in \cite[Theorem 1.2]{HLM}, for every the operators $Q_{i,t}$, $i=1,...,n$ and $t>0$, we have  defined  in previous subsection.

\begin{prop}\label{Prop3.1}
Le $X$ be a Banach space and $1<p<\infty$.
\begin{enumerate}
    \item[(i)] Let $2\leq q<\rho<\infty$ and $X$ be of $q$-martingale cotype with UMD property. Then, for every $i=1,...,n$, the operator $V_\rho^X(\{Q_{i,t}\}_{t>0})$ is bounded from $L^p_X(\mathbb R^n,dx)$ into $L^p(\mathbb R^n,dx)$.
    \item[(ii)] Let $2<\rho<\infty$. Suppose that, for every $i=1,...,n$, $V_\rho^X(\{Q_{i,t}\}_{t>0})$ is bounded from $L^p_X(\mathbb R^n,dx)$ into $L^p(\mathbb R^n,dx)$. Then, $X$ has $\rho$-martingale cotype and the UMD property.
\end{enumerate}
\end{prop}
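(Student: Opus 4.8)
The plan is to reduce the $n$-dimensional statement to the one-dimensional result in \cite[Theorem 1.2]{HLM} by exploiting the tensor-product structure of the kernels $Q_{i,t}$ and the method of rotations. First I would record that, for fixed $i$, the operator $Q_{i,t}$ is the composition $Q_{i,t}=P_t\mathcal R_i$, where $P_t$ is the classical Poisson semigroup in $\mathbb R^n$ and $\mathcal R_i$ the $i$-th Euclidean Riesz transform; consequently the family $\{Q_{i,t}\}_{t>0}$ inherits, via subordination, the structure needed to apply the general transference scheme of \cite{CJRW1} (used already in subsection 3.3 for the scalar case). For part (i), since $P_t$ is a symmetric diffusion semigroup and $X$ has $q$-martingale cotype with $q<\rho$, the semigroup part contributes a bounded $\rho$-variation operator by \cite[Lemma 4.2 and Remark 4.3]{HM2}; the remaining point is to control the variation of the ``Riesz-transform direction''. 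Here I would invoke the method of rotations: writing $\mathcal R_i$ as an average over the sphere of directional Hilbert transforms along lines, the $\rho$-variation of $\{Q_{i,t}\}_{t>0}$ is dominated, after integrating in the transversal variables, by the $\rho$-variation of the one-dimensional family $\{P_t^{(1)}H\}_{t>0}$, which is bounded on $L^p_X(\mathbb R,dx)$ by \cite[Theorem 1.2]{HLM}. Minkowski's integral inequality for the $F_\rho^X$-norm then assembles the pieces; the UMD hypothesis is exactly what makes the vector-valued Hilbert transform (hence $\mathcal R_i$) bounded and what \cite{HLM} requires.

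For part (ii), I would argue by specialization and restriction. Given that $V_\rho^X(\{Q_{i,t}\}_{t>0})$ is bounded on $L^p_X(\mathbb R^n,dx)$ for every $i$, I would first produce the UMD property: a standard limiting/dilation argument (letting the Poisson parameter degenerate, or testing on functions concentrated along the $x_i$-axis and using that $\lim_{t\to 0^+}Q_{i,t}(f)=\mathcal R_i(f)$ together with the jump inequality (\ref{eq1.1})) shows that the $i$-th vector-valued Riesz transform is bounded on $L^p_X$, and by \cite[Theorem 1.2]{B1} — or directly by Burkholder--Bourgain — this forces $X$ to be UMD. Then, to obtain $\rho$-martingale cotype, I would restrict to functions of the form $f(x)=g(x_1)\varphi(x_2,\dots,x_n)$ and integrate out the transversal variables, reducing the hypothesis to the boundedness of $V_\rho^X(\{Q_{1,t}^{(1)}\}_{t>0})$ on $L^p_X(\mathbb R,dx)$; the converse half of \cite[Theorem 1.2]{HLM} then yields that $X$ has $\rho$-martingale cotype.

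The main obstacle I anticipate is the rotation argument in part (i): one must check that the method of rotations is compatible with the $\rho$-variation seminorm, i.e.\ that the pointwise bound
\[
\Big\|\{Q_{i,t}(f)(x)\}_{t>0}\Big\|_{F_\rho^X}
\le C\int_{S^{n-1}}\Big\|\{Q^{(1)}_{t}(f_\omega)(x\cdot\omega)\}_{t>0}\Big\|_{F_\rho^X}\,d\sigma(\omega)
\]
holds with the ``slices'' $f_\omega$ arranged so that the outer $L^p$-norm can be recovered after integration — this is delicate because $F_\rho^X$ is only a seminorm modulo constants and because the subordination formula must be interchanged with the spherical average. A clean way around it is to avoid rotations altogether and instead transfer directly: observe that $Q_{i,t}$ is a Fourier multiplier with symbol $-i\frac{\xi_i}{|\xi|}e^{-t|\xi|}$ and apply the general vector-valued variational multiplier theorem obtainable by combining \cite{HM2} (semigroup part) with the Riesz-transform boundedness from UMD; I would present the rotation route as primary and remark that the multiplier route gives an alternative. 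Either way, part (ii) is routine once part (i)'s techniques are in place.
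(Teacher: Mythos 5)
In part (i) your primary route is flawed, and your ``alternative'' is in fact the paper's proof. The rotation bound you propose, dominating $\|\{Q_{i,t}(f)(x)\}_{t>0}\|_{F_\rho^X}$ by a spherical average of variations of the one-dimensional family $\{P^{(1)}_tH\}_{t>0}$ on slices, does not hold: writing $\mathcal R_i$ as an average of directional Hilbert transforms still leaves the full $n$-dimensional Poisson semigroup $P_t$ inside the variation, and $P_t$ does not restrict to the one-dimensional Poisson semigroup along lines, so \cite[Theorem 1.2]{HLM} cannot be applied to the slices as you indicate. No rotation or multiplier theorem is needed: since $\mathcal R_i$ does not depend on $t$, one has pointwise $V_\rho^X(\{Q_{i,t}\}_{t>0})(f)=V_\rho^X(\{P_t\}_{t>0})(\mathcal R_i f)$; then \cite[Theorem 3.1, Lemma 4.2 and Remark 4.3]{HM2} (martingale cotype $q<\rho$) give the $L^p_X\rightarrow L^p$ bound for $V_\rho^X(\{P_t\}_{t>0})$, and the UMD property gives $\|\mathcal R_i f\|_{L^p_X}\leq C\|f\|_{L^p_X}$. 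This composition argument is exactly what the paper does, so part (i) is fine provided you discard the rotation step and promote your remark to the actual proof.

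In part (ii) the UMD half matches the paper (use $\mathcal R_i(f)=\lim_{t\rightarrow 0^+}Q_{i,t}(f)$ a.e.\ together with the variational bound), but the martingale-cotype half has a genuine gap. Restricting to $f(x)=g(x_1)\varphi(x_2,\dots,x_n)$ and ``integrating out the transversal variables'' does not reduce the hypothesis to the boundedness of $V_\rho^X(\{Q^{(1)}_{t}\}_{t>0})$ on $L^p_X(\mathbb R,dx)$: the conjugate Poisson kernel $(x_1-y_1)(t^2+|x-y|^2)^{-(n+1)/2}$ does not factor as a tensor product, so $Q_{1,t}(g\otimes\varphi)$ is not of the form $Q^{(1)}_t(g)\otimes(\cdot)$, and turning this heuristic into a proof requires a nontrivial limiting, dilation or transference argument that your outline does not supply; moreover the ``converse half of \cite[Theorem 1.2]{HLM}'' you invoke is not the statement available there. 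The paper's argument for this direction is precisely the machinery you skipped: use the identity $\sum_{i=1}^n\mathcal R_i^2(f)=nf$ (valid on $L^p_X$ once UMD is established) to convert the hypothesis on $\{Q_{i,t}\}_{t>0}$ into boundedness of $V_\rho^X(\{P_t\}_{t>0})$ on $L^p_X(\mathbb R^n,dx)$; transfer this estimate, uniformly over the truncated variation spaces $F^X_{\rho,a}$, to the Poisson semigroup on $\mathbb T^n$ via the multiplier transference theorem \cite[Theorem 2.4]{BV}; reduce the dimension on the torus down to $\mathbb T^1$ by Jodeit's extension--restriction argument \cite{Jod} combined with Jensen's inequality; and only then deduce $\rho$-martingale cotype from \cite[Section 2 and (2.7)]{HLM}, which concerns the Poisson semigroup on the circle. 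Without some substitute for this transference and dimension-reduction chain, your part (ii) is incomplete.
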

\begin{proof} $\;$ \newline

\noindent{\bf (i)} According to \cite[Theorem 1.2]{HLM} the operator $V_\rho(\{H_\epsilon\}_{\epsilon >0})$ is bounded from $L^p_X(\mathbb R^n,dx)$ into $L^p(\mathbb R^n,dx)$. The arguments in the proof of \cite[Theorem 2.1]{CJRW2} lead to the corresponding boundedness of $V_\rho(\{\mathcal R_{i,\epsilon}\}_{\epsilon >0})$ for $i=1,...,n$.

For every $t>0$, we define

$$A_t(f)(x)=\frac{1}{|B(0,t)|}\int_{B(0,t)}f(x+y)dy,\;\;\;\;x\in\mathbb R^n.$$
These operators are usually named central differential operators in $\mathbb R^n$.

By \cite[Theorem 3.1]{HM2}, since $X$ has $q$-martingale cotype with $2\leq q<\rho$, the operator $V_\rho^X(\{A_t\}_{t >0})$ is bounded from $L^p_X(\mathbb R^n,dx)$ into $L^p(\mathbb R^n,dx)$. By using \cite[Lemma 4.2 and Remark 4.3]{HM2} we obtain that $V_\rho(\{P_t\}_{t>0})$ is bounded from $L^p_X(\mathbb R^n,dx)$ into $L^p(\mathbb R^n,dx)$.

For every $i=1,...,n$, we have that

$$Q_{i,t}(f)=P_t(\mathcal R_i(f)),\;\;\;\;f\in L^p(\mathbb R^n,dx)\otimes X\;\mbox{and}\;t>0.$$
Since $X$ has the UMD property we conclude that $V_\rho^X(\{Q_{i,t}\}_{t >0})$ is bounded from $L^p_X(\mathbb R^n,dx)$ into $L^p(\mathbb R^n,dx)$, for $i=1,...,n$. \newline

\noindent{\bf (ii)} Assume that $V_\rho^X(\{Q_{i,t}\}_{t >0})$ is bounded from $L^p_X(\mathbb R^n,dx)$ into $L^p(\mathbb R^n,dx)$, for every $i=1,...,n$. It is well-known that, for every $f\in L^p(\mathbb R^n,dx)\otimes X$, and $i=1,...,n$

$$\mathcal R_i(f)(x)=\lim_{t\rightarrow 0}Q_{i,t}(f)(x),\;\;\;\;\mbox{for almost all}\;x\in\mathbb R^n.$$
It follows that, for every $f\in L^p(\mathbb R^n,dx)\otimes X$,

$$\|\mathcal R_i(f)\|_{L^p_X(\mathbb R^n,dx)}\leq \|V_\rho^X(\{Q_{i,t}\}_{t >0})\|_{L^p_X(\mathbb R^n,dx)}+\|Q_{1,i}(f)\|_{L^p_X(\mathbb R^n,dx)}\leq C \|f\|_{L^p_X(\mathbb R^n,dx)},\;\;\;i=1,...,n.$$
We concluded that $X$ has the UMD property.

In order to prove that $X$ has $\rho$-martingale cotype we use the following strategy. Firstly we transfer our problem from $\mathbb R^n\otimes X$ to $\mathbb T^n\otimes X$, being $\mathbb T^n$ the torus $n$-dimensional (\cite{BV}), next to $\mathbb T^1\otimes X$ (\cite{Jod}) and then we use the results obtained in \cite{HLM}.

For every $f\in L^p(\mathbb R^n,dx)\otimes X$, $\displaystyle \sum_{i=1}^n\mathcal R_i^2(f)=nf$. Since $X$ is UMD this last equality also holds for every $f\in L^p_X(\mathbb R^n,dx)$. Then, we obtain

$$P_t(f)=\frac{1}{n}\sum_{i=1}^n Q_{t,i}(\mathcal R_i(f)),\;\;\;\;  f\in L^p_X(\mathbb R^n,dx).$$
That leads straightforwardly to the boundedness of $V_p^X(\{P_t\}_{t >0})$ from $L^p_X(\mathbb R^n,dx)$ into $L^p(\mathbb R^n,dx)$ and then, $V_\rho^X(\{P_t\}_{t >0})$  is bounded from $L^q_X(\mathbb R^n,dx)$ into $L^q(\mathbb R^n,dx)$, for every $1<q<\infty$.

Let $a>0$. We say that a function $g:(0,a]\rightarrow X$ is in $F_{\rho ,a}^X$ when

$$ \|g\|_{F_{\rho ,a}^X}=\sup_{\begin{array}{c} 0<t_1<...<t_k\leq a \\ k\in\mathbb N \end{array}} \left(\sum_{j=1}^{k-1}\|g(t_{j+1})-g(t_j)\|^\rho \right)^{1/\rho}<\infty.$$
$(F_{\rho ,a}^X, \|\cdot\|_{F_{\rho ,a}^X})$ is a Banach space (modulus constant functions).

We consider the function

$$\begin{array} {ccccccc} g_a: & \mathbb R^n & \rightarrow & F_{\rho ,a}^X & & &  \\
& x & \rightarrow & g(x): & (0,a] & \longrightarrow \mathbb R  &  \\
 & & & & t & \rightarrow & [g(x)](t)=e^{-|x|t}.
\end{array}$$
We are going to see that $g_a$ is continuous. Indeed, let $x_0\in(0,a]$. For every $y\in\mathbb R^n$ and $0<t_1<...<t_k\leq a$, $k\in\mathbb N$, we have that

\begin{align*}
  & \left(  \sum_{j=1}^{k-1}\left|[g_a(x)](t_{j+1})-[g_a(x)](t_{j})-\left([g_a(y)](t_{j+1})-[g_a(y)](t_{j})\right)\right|^\rho\right)^{1/\rho} \\
   & \leq \sum_{j=1}^{k-1}\int_{t_j}^{t_{j+1}}\left|\frac{\partial}{\partial t}\left([g_a(x)](t)-[g_a(y)](t)\right)\right|dt \\
    & \leq \int_0^a \left||x|e^{-|x|t}-|y|e^{-|y|t}\right|dt.
\end{align*}
Then,

$$\|g_a(x)-g_a(y)\|_{F_{\rho ,a}^{\mathbb R}}\leq\int_0^a \left||x|e^{-|x|t}-|y|e^{-|y|t}\right|dt.$$
The dominated convergence theorem allows us to conclude that $g$ is continuosu in $X$.

We denote by $\{\mathbb P_t\}_{t>o}$ the Poisson semigroup over $\mathbb T^n=\left[-\frac{1}{2},\frac{1}{2}\right]^n$. That is,

$$\mathbb P_t(f)(x)=\sum_{k\in\mathbb Z^n}e^{-t|k|}c_k(f)e_k(x),\;\;\;\;x\in \left[-\frac{1}{2},\frac{1}{2}\right]^n.$$
Here, for every $k\in\mathbb Z^n$, $e_k(x)=e^{-2\pi i\langle x,k\rangle}$, $x\in\mathbb T^n$, and $c_k(f)=\int_{\mathbb T^n}f(x)e_k(x)dx$.

Let $1<q<\infty$. Our next objective is to see that $V_\rho^X(\{\mathbb P_t\}_{t>0})$ is a bounded operator from $L^q_X(\mathbb T^n,dx)$ into $L^q(\mathbb T^n,dx)$. In order to do so we use a transference argument inspired in some ideas in \cite{BV}.

Note firstly that by taking into account monotone convergence theorem

$$\|V_\rho^X(\{\mathbb P_t\}_{t>0})(f)\|_{L^q_X(\mathbb T^n,dx)}=\lim_{a\rightarrow\infty}\left\|\|\mathbb P_t(f)\|_{F_{\rho ,a}^X}\right\|_{L^q(\mathbb T^n,dx)}$$
Hence our objective will be obtained proving that there exists $C>0$ such that

$$\left\|\|\mathbb P_t(f)\|_{F_{\rho ,a}^X}\right\|_{L^q(\mathbb T^n,dx)}\leq C\|f\|_{L^q_X(\mathbb T^n,dx)},\;\;\;\;f\in L^q_X(\mathbb T^n,dx)\;\mbox{and}\;a>0.$$

Let $a>0$. We consider the function $g_a$ as above.

We can write $P_t$ as a Fourier multiplier in the following way

$$P_t(f)=\left(e^{-t|y|}\hat{f}(y)\right)^\vee,\;\;\;\;t>0,$$
where $\wedge$ and $\vee$ represent the Fourier transform and its inverse, respectively.

We have that there exists $C>0$ such that

$$\left\|\|P_t(f)\|_{F_{\rho ,a}^X}\right\|_{L^q(\mathbb R^n,dx)}\leq C\|f\|_{L^q_X(\mathbb R^n,dx)},\;\;\;\;f\in L^q_X(\mathbb R^n,dx)\;\mbox{and}\;a>0.$$
We now apply \cite[Theorem 2.4]{BV} in its unweighted form. Actually, there only scalar functions are considered, but the proof in \cite[pages from 531 to 534]{BV} can be rewritten when the functions take values in a Banach space $X$. Note that if $\mathcal P_X$ denotes the set of trigonometric polynomial with coefficients in $X$, it is a dense subspace of $L^q_X(\mathbb T^n,dx)$.

We conclude that

$$\left\|\|\mathbb P_t(f)\|_{F_{\rho ,a}^X}\right\|_{L^q(\mathbb T^n,dx)}\leq C\|f\|_{L^q_X(\mathbb T^n,dx)},\;\;\;\;f\in L^q_X(\mathbb T^n,dx)\;\mbox{and}\;a>0.$$

Since $C$ does not depend on $a>0$, we obtain

$$\|V_\rho^X(\{\mathbb P_t\}_{t>0})(f)\|_{L^q_X(\mathbb T^n,dx)}\leq C\|f\|_{L^q_X(\mathbb T^n,dx)},\;\;\;\;f\in L^q_X(\mathbb T^n,dx).$$

In the sequel we denote by $\{\mathbb P_t^{(k)}\}_{t>0}$ the Poisson semigroup in $\mathbb T^k$,  $k\in\mathbb N$. We are going to see that

$$\|V_\rho^X(\{\mathbb P_t^{(n-1)}\}_{t>0})(f)\|_{L^q_X(\mathbb T^{n-1},dx)}\leq C\|f\|_{L^q_X(\mathbb T^{n-1},dx)},\;\;\;\;f\in L^q_X(\mathbb T^{n-1},dx).$$

In order to prove this inequality we follows some ideas developed in \cite[p. 226]{Jod} understanding them in a vectorial setting.

We consider the extension, $E$, and the restriction, $R$, operators defined by

$$E(h)(x_1,...,x_n)=h(x_1,...,x_{n-1}),\;\;\;\;(x_1,...,x_n)\in\mathbb T^n,$$
and

$$R(f)(x_1,...,x_{n-1})=\int_{-1/2}^{1/2}f(x_1,...,x_{n-1},z)dz,\;\;\;\;(x_1,...,x_{n-1})\in\mathbb T^{n-1}.$$
Assume that $f\in L^q_X(\mathbb T^{n-1},dx)$. We have that

$$\mathbb P_t^{(n)}(E(f))(x)=\sum_{k\in\mathbb N^n}e^{-t|k|}c_k^{(n)}(E(f))e_k(x),\;\;\;\;x\in\mathbb T^n\;\mbox{and}\;t>0.$$
We can write

$$c_k^{(n)}(E(f))=\int_{\mathbb T^n}f(y_1,...,y_{n-1})\prod_{j=1}^n e^{-2\pi y_jk_j}dy_1...dy_{n-1}dy_n=\left\{\begin{array} {lll} 0, & k_n\neq 0 & \\ & & ,k\in\mathbb N^n. \\ c_{\tilde{k}}^{(n-1)}, & k_n=0, &
\end{array}\right.$$
For every $k=(k_1,...,k_n)\in\mathbb Z^n$, we write $\tilde{k}=(k_1,...,k_{n-1})$. $c_{\tilde{k}}^{(n-1)}(f)$ has the obvious meaning for every $\tilde{k}\in\mathbb Z^{n-1}$.

We get

$$\mathbb P_t^{(n)}(E(f))(x)=\sum_{k\in\mathbb N^{n-1}}e^{-t|k|}c_{k}^{(n-1)}(f)e_k(x),\;\;\;\;x\in\mathbb T^n.$$

Then

$$R(\mathbb P_t^{(n)}(E(f)))(x)=\mathbb P_t^{(n-1)}(E(f))(x),\;\;\;\;x\in\mathbb T^{n-1}.$$

By using Jensen inequality we get

\begin{align*}
  & \left\|V_\rho^X(\{\mathbb P_t^{(n-1)}\}_{t>0})(f)\right\|_{L^q(\mathbb T^{n-1},dx)}^q\leq \int_{\mathbb T^{n-1}}\int_{-1/2}^{1/2}\left|V_\rho^X(\{\mathbb P_t^{(n)}\}_{t>0})(E(f))(x_1,...,x_n)\right|^qdx_ndx_1...dx_{n-1} \\
    & \leq C \int_{\mathbb T^{n}}\|E(f)(x)\|^qdx = C\int_{\mathbb T^{n-1}}\|f(x)\|^qdx.
\end{align*}
By iterating this argument we prove that $V_\rho^X(\{\mathbb P_t^{(1)}\}_{t>0})$ is bounded from  $ L^q_X(\mathbb T^{1},dx)$ into $ L^q(\mathbb T^{1},dx)$. Then (see \cite[Section 2]{HLM} and \cite[(2.7)]{HLM}),  $X$ has the $\rho$-martingale cotype.

Thus the proof of the proposition is finished.

\end{proof}

\subsection{Proof of Theorem \ref{th1.5}}
According to Proposition \ref{Prop3.1} this proof will be completed if we establish that the following two properties are equivalent for every $i=1,...,n$.

\begin{enumerate}
    \item[(a)] $V_\rho^X(\{\mathcal C_{i,t}^{\mathcal A}\}_{t>0}\})$ is bounded from $L_X^p(\mathbb R^n,\gamma_{-1})$ into $L^p(\mathbb R^n,\gamma_{-1})$.

    \item[(b)] $V_\rho^X(\{Q_{i,t}\}_{t>0}\})$ is bounded from $L_X^p(\mathbb R^n,\gamma_{-1})$ into $L^p(\mathbb R^n,\gamma_{-1})$.
\end{enumerate}

Let $i=1,...,n$. By proceeding as in the proof of Theorem \ref{th1.4} we can prove that $(b)\Rightarrow (a)$. We now prove $(a)\Rightarrow (b)$. Assume that $(a)$ holds. We have that

$$V_{\rho,loc}^X(\{Q_{i,t}^{\mathcal A}\}_{t>0}\})(f)\leq V_{\rho,loc}^X(\{Q_{i,t}-\mathcal C_{i,t}^{\mathcal A}\}_{t>0}\})(f)+V_{\rho,loc}^X(\{\mathcal C_{i,t}^{\mathcal A}\}_{t>0}\})(f).$$

We recall that

$$\mathcal C_{i,t}^{\mathcal A}(x,y)=\frac{1}{\pi}\int_0^\infty\frac{e^{-t^2/4u}}{\sqrt{u}}\partial_{x_i}T_u^{\mathcal A}(x,y)du,\;\;\;\;x,y\in\mathbb R^n.$$
Then,

\begin{align*}
  & V_\rho^X(\{\mathcal C_{i,t}^{\mathcal A}(x,y)\}_{t>0}\})\leq  C\int_0^\infty\frac{\partial_{x_i} T_u^{\mathcal A}(x,y)}{\sqrt{u}}du\leq C\int_0^\infty\frac{|x_i-e^{-t}y_i|}{\sqrt{t}}\frac{e^{-|x-ye^{-t}|^2/(1-e^{-2t})}}{(1-e^{-2t})^{n/2+1}}e^{-nt}dt \\
    & \leq C\int_0^\infty\frac{e^{-c|x-ye^{-t}|^2/(1-e^{-2t})}}{\sqrt{t}(1-e^{-2t})^{\frac{n+1}{2}}}e^{-nt}dt \leq C\left(\int_{m(x)}^\infty\frac{1}{t^{n/2+1}}dt+\int_0^{m(x)}\frac{e^{-c|x-y|^2/t}}{t^{n/2+1}}dt\right)\\
    & \leq C \left(\frac{1}{m(x)^{n/2}}+\int_0^\infty\frac{e^{-c|x-y|^2/t}}{t^{n/2+1}}dt\right)\leq \frac{C}{|x-y|^n},\;\;\;\;(x,y)\in N_1.
\end{align*}
By arguing as in the proof of \cite[Proposition 3.2.7]{Sa} we deduce that $V_{\rho,loc}^X(\{\mathcal C_{i,t}^{\mathcal A}\}_{t>0}\})$ is bounded from $L_X^p(\mathbb R^n,\gamma_{-1})$ into $L^p(\mathbb R^n,\gamma_{-1})$. By using \cite[Proposition 3.2.5]{Sa} we get that $V_{\rho,loc}^X(\{\mathcal C_{i,t}^{\mathcal A}\}_{t>0}\})$ is bounded from $L_X^p(\mathbb R^n,dx)$ into $L^p(\mathbb R^n,dx)$.

As in the proof of Theorem \ref{th1.4} we can see that $V_{\rho,loc}^X(\{\mathcal C_{i,t}^{\mathcal A}-Q_{i,t}\}_{t>0}\})$ is bounded from $L_X^p(\mathbb R^n,dx)$ into $L^p(\mathbb R^n,dx)$.

For every $\lambda >0$ we define $f_\lambda(x)=f(\lambda x)$, $x\in\mathbb R^n$. We have that

$$Q_{i,t}(f_\lambda)(\frac{x}{\lambda})=Q_{i,\lambda t}(f)(x),\:\:\:\:x\in\mathbb R^n,\;\;\lambda,t>0.$$
It follows that

$$V_{\rho}^X(\{Q_{i,\lambda t}\}_{t>0}\})(f)(x)=V_{\rho}^X(\{Q_{i,t}\}_{t>0}\})(f_\lambda)(\frac{x}{\lambda}),\;\;\;\; x\in\mathbb R^n,\;\mbox{and}\;\lambda>0.$$
Assume that $f\in L_X^p(\mathbb R^n,dx)$ with compact support and $r >0$. Then there exists $\lambda >0$ such that $(x/\lambda,y)\in N_1$ provided that $y\in \supp f_\lambda$ and $|x|< r$. Then, $V_{\rho}^X(\{Q_{i,t}\}_{t>0}\})(f)(x)=V_{\rho,loc}^X(\{Q_{i,t}\}_{t>0}\})(f_\lambda)(\frac{x}{\lambda})$, $x\in\mathbb R^n$.

We have that (see \cite[p. 21]{HTV})

\begin{align*}
  & \int_{B(0,r)}|V_\rho^X(\{Q_{i,t}\}_{t>0}\})(f)(x)|^pdx  =  \int_{B(0,r/\lambda)}|V_{\rho,loc}^X(\{Q_{i,t}\}_{t>0}\})(f_\lambda)(x)|^pdx \lambda^n \\
    & \leq C\int_{\mathbb R^n}\|f_\lambda(x)\|^pdx\lambda^n=C\|f\|^p_{L_X^p(\mathbb R^n,dx)}.
\end{align*}
We get by letting $r\rightarrow\infty$

$$\|V_{\rho}^X(\{Q_{i,t}\}_{t>0}\})(f)\|_{L_X^p(\mathbb R^n,dx)}\leq C\|f\|_{L_X^p(\mathbb R^n,dx)}.$$
Thus, $(b)$ is proved and the proof is complete.

\subsection{Proof of Corollary \ref{cor1.6}}
According to Theorem \ref{th1.4}, for every $f\in L_X^p(\mathbb R^n,\gamma_{-1})$ there exists the limit $\displaystyle\lim_{t\rightarrow 0^+}\mathcal C^{\mathcal A}_{i,t}(f)(x)$, for almost all $x\in\mathbb R^n$. It also holds, for every $f\in L^p(\mathbb R^n,dx)$ that there exists the limit $\displaystyle\lim_{t\rightarrow 0^+}Q_{i,t}(f)(x)=\mathcal R_i(f)(x)$, for almost all $x\in\mathbb R^n$.

By $C_c^\infty(\mathbb R^n)$ we denote the space of smooth functions with compact support in $\mathbb R^n$. Let $f\in C_c^\infty(\mathbb R^n)$. For every $t>0$,

$$\mathcal C_{i,t}^{\mathcal A}(f)(x)-Q_{i,t}(f)(x)=\int_{\mathbb R^n}f(y)(\mathcal C_{i,t}^{\mathcal A}(x,y)-Q_{i,t}(x,y))dy,\;\;\;\;x\in\mathbb R^n,$$
being

$$\mathcal C_{i,t}^{\mathcal A}(x,y)-Q_{i,t}(x,y)=\frac{1}{\sqrt{\pi}}\int_0^\infty\frac{e^{-t^2/4u}}{\sqrt{u}}(\partial_{x_i}T_u^{\mathcal A}(x,y)-\partial_{x_i}T_u(x,y))du,\;\;\;\;x,y\in\mathbb R^n.$$
We choose $\delta=4/3$. According to (\ref{2.30}) we have that

$$|\mathcal C_{i,t}^{\mathcal A}(x,y)-Q_{i,t}(x,y)|\leq C\frac{(1+|x|)^{1/2}}{|x-y|^{n-1/2}},\;\;\;\;(x,y)\in N_\delta\;\mbox{and}\;t>0.$$
By (\ref{2.29}) we get

\begin{align*}
    |\mathcal C_{i,t}^{\mathcal A}(x,y)|\leq & C\left\{\begin{array}{lll}
       e^{-\eta|x|^2},  &  \mbox{if}\;\langle x,y\rangle\leq 0 & \\
         & & (x,y)\notin N_\delta, \\
        |x+y|^n exp\left(-\frac{\eta}{2}|x+y||x-y|-\frac{\eta}{2}(|x|^2-|y|^2)\right) & \mbox{if}\;\langle x,y\rangle \geq 0 &
    \end{array}  \right.
\end{align*}
where $\eta=3/4$.

Furthermore, we have that

\begin{align*}
    |Q_{i,t}(x,y)|\leq & C \int_0^\infty\frac{|x_i-y_i|}{t^{\frac{n+3}{2}}}e^{-|x-y|^2/2t}dt\leq C \int_0^\infty\frac{1}{t^{\frac{n+2}{2}}}e^{-c|x-y|^2/2t}dt \\
    & \leq C\frac{1}{|x-y|^n}\leq C(1+|x|)^n,\;\;\;\;(x,y)\in N_\delta^c.
\end{align*}
By using the above estimates we conclude that

$$|\mathcal C_{i,t}^{\mathcal A}(x,y)-Q_{i,t}(x,y)|\leq H(x,y),\;\;\;\;(x,y)\in \mathbb R^n\;\mbox{and}\;t>0,$$
where $H$ is a measurable function on $\mathbb R^n\times\mathbb R^n$ such that

$$\int_{\mathbb R^n}|f(y)|H(x,y)dy<\infty,\;\;\mbox{for every}\;\;x\in\mathbb R^n.$$

Then, by using dominated convergence theorem, we get

$$\lim_{t\rightarrow 0^+}(\mathcal C_{i,t}^{\mathcal A}(f)(x)-Q_{i,t}(f)(x))=\int_{\mathcal R^n}f(y)(\mathcal R_i^{\mathcal A}(x,y)-\mathcal R_i(x,y))dy,\;\;\;\;x\in\mathbb R^n,$$
where

$$\mathcal R_i^{\mathcal A}(x,y)=\frac{1}{\sqrt{\pi}}\int_0^\infty\partial_{x_i}T_u^{\mathcal A}(x,y)\frac{du}{\sqrt{u}},\;\;\;\;x,y\in\mathbb R^n,\;x\neq y.$$
and

$$\mathcal R_i(x,y)=\frac{1}{\sqrt{\pi}}\int_0^\infty\partial_{x_i}T_u(x,y)\frac{du}{\sqrt{u}},\;\;\;\;x,y\in\mathbb R^n,\;x\neq y.$$

Note that $|\mathcal R_i^{\mathcal A}(x,y)-\mathcal R_i(x,y)|\leq H(x,y)$, $x,y\in\mathbb R^n$. Then,

$$\int_{\mathbb R^n} f(y)(\mathcal R_i^{\mathcal A}(x,y)-\mathcal R_i(x,y))dy=\lim_{\epsilon\rightarrow 0^+}\int_{|x-y|>\epsilon} f(y)(\mathcal R_i^{\mathcal A}(x,y)-\mathcal R_i(x,y))dy,\;\;\;\;x\in\mathbb R^n.$$
Since $\lim_{t\rightarrow 0^+}Q_{i,t}(f)(x)=\mathcal R_i(f)(x)$, for almost all $x\in\mathbb R^n$, we conclude that

$$\lim_{\epsilon\rightarrow 0^+}\int_{|x-y|>\epsilon} f(y)\mathcal R_i^{\mathcal A}(x,y)dy=\lim_{t\rightarrow 0^+}\mathcal C_{i,t}^{\mathcal A}(f)(x),\;\;\mbox{for almost all}\;\;x\in\mathbb R^n.$$
Since $V_\rho(\{\mathcal C_{i,t}\}_{t>0})$ is bounded from $L^p(\mathbb R^n,\gamma_{-1})$ into itself, the operator $L$ defined by

$$L(f)(x)=\lim_{t\rightarrow 0^+}\mathcal C_{i,t}^{\mathcal A}(f)(x),$$
is bounded from $L^p(\mathbb R^n,\gamma_{-1})$ into itself. The Riesz transform $\mathcal R_i^{\mathcal A}$ is also bounded from $L^p(\mathbb R^n,\gamma_{-1})$ into itself. Since $C_c^\infty(\mathbb R^N)$ is a dense subspace of $L^p(\mathbb R^n,\gamma_{-1})$, it follows that, for every $f\in L^p(\mathbb R^n,\gamma_{-1})$,

$$\mathcal R_i^{\mathcal A}(f)(x)=\lim_{t\rightarrow 0^+}\mathcal C_{i,t}^{\mathcal A}(f)(x),\;\;\mbox{for almost all}\;\;x\in\mathbb R^n.$$

\section{Proof of Theorem \ref{th1.7}}
\subsection{An auxiliary result}
In this subsection we establish an extension of \cite[Theorem 1.1]{CT} that will  be useful to prove Theorem \ref{th1.7}.

Assume that $\{t_j\}_{j\in\mathbb Z}$ is an increasing sequence in $(0,\infty)$ such that $\displaystyle\lim_{j\rightarrow -\infty}t_j=0$ and $\{v_j\}_{j\in\mathbb Z}$ is a bounded sequence of complex numbers. For every $N=(N_1,N_2)\in\mathbb Z^2$, $N_1<N_2$ and $\alpha >0$, we define

$$T_N^\alpha(f)=\sum_{j=N_1}^{N_2}v_j(t^\alpha\partial_t^\alpha T_t(f)_{|t=t_{j+1}}-t^\alpha\partial_t^\alpha T_t(f)_{|t=t_j}).$$

We also consider the maximal operator

$$T_*^\alpha(f)=\sup_{\begin{array}{c} N=(N_1,N_2)\in\mathbb Z^2 \\ N_1<N_2 \end{array}}|T_N^\alpha(f)|.$$

The operators $P_N^\alpha$ and $P_*^\alpha$ are defined as above by replacing the heat semigroup $\{T_t\}_{t>0}$ by the Poisson semigroup $\{P_t\}_{t>0}$.

\begin{prop}\label{prop4.1}
Assume that $\{v_j\}_{j\in\mathbb Z}$ is a bounded sequence of complex numbers and $\{t_j\}_{j\in\mathbb Z}$ is a $\lambda$-lacunary with $\lambda >1$ in $(0,\infty)$.
Then the operator $W_*^\alpha$ (respectively $P_*^\alpha$) is bounded from $L^p(\mathbb R^n,dx)$ into itself, for every $1<p<\infty$  and from $L^1(\mathbb R^n,dx)$ into $L^{1,\infty}(\mathbb R^n,dx)$ when $\alpha\geq 1$ or $\alpha =0$ (respectively, $\alpha\geq 0$)
\end{prop}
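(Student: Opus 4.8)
The plan is to treat the heat and Poisson cases in parallel and to reduce the statement to two ingredients: a Fourier multiplier analysis that produces the $L^p$ bounds, and a Calder\'on--Zygmund analysis of the (vector-valued) kernels that produces the weak type $(1,1)$ bound. I describe the heat case; the Poisson case is entirely analogous, replacing the identity $t^\alpha\partial_t^\alpha T_t(z)=t^{-n/2}\Psi_\alpha(|z|/\sqrt t)$ coming from (\ref{2.3}) by its Poisson counterpart $t^\alpha\partial_t^\alpha P_t(z)=t^{-n}\widetilde\Psi_\alpha(|z|/t)$, proved in the same way in Section~2, or else by invoking subordination directly. Write $G_s^\alpha f=s^\alpha\partial_s^\alpha T_sf$; for $N=(N_1,N_2)$ with $N_1<N_2$ set $\phi_N=\sum_{j=N_1}^{N_2}v_j\chi_{(t_j,t_{j+1})}$, so that $\|\phi_N\|_\infty\le\|v\|_\infty$ and
$$T_N^\alpha f(x)=\sum_{j=N_1}^{N_2}v_j\bigl(G_{t_{j+1}}^\alpha f(x)-G_{t_j}^\alpha f(x)\bigr)=\int_0^\infty\phi_N(s)\,\partial_s\bigl[s^\alpha\partial_s^\alpha T_sf(x)\bigr]\,ds .$$
On the Fourier side $G_{t_{j+1}}^\alpha f-G_{t_j}^\alpha f=\eta_j*f$ with $\widehat{\eta_j}(\xi)=d_j(\xi):=\psi_\alpha(t_{j+1}|\xi|^2/2)-\psi_\alpha(t_j|\xi|^2/2)$, $\psi_\alpha(r)=(-1)^{\lceil\alpha\rceil}r^\alpha e^{-r}$. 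The key elementary fact is that, since $\psi_\alpha$ and each $(r\partial_r)^k\psi_\alpha$ are bounded, unimodal, and decay rapidly at $0$ and at $\infty$, while the numbers $\{t_j|\xi|^2\}_j$ are lacunary, telescoping of consecutive differences gives $\sum_j\bigl|(\xi\cdot\nabla_\xi)^k d_j(\xi)\bigr|\le C_k$ uniformly in $\xi$, for every $k\ge0$.

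First I would prove the $L^p$ bounds, $1<p<\infty$. Writing $T_N^\alpha=S_{N_2}^\alpha-S_{N_1-1}^\alpha$ with $S_M^\alpha f=\sum_{j\le M}v_j\eta_j*f$ (the partial sums converging in $L^2$ because $d_j(\xi)\to0$ geometrically as $j\to\pm\infty$ for each fixed $\xi$, while $\bigl|\sum_jv_jd_j(\xi)\bigr|\le C\|v\|_\infty$), one gets $T_*^\alpha f\le2\sup_M|S_M^\alpha f|$. The operator $S_\infty^\alpha=\sum_jv_j\eta_j$ has symbol $m_\infty(\xi)=\sum_jv_jd_j(\xi)$, which by the estimate above satisfies the H\"ormander--Mihlin condition $|\partial^\beta m_\infty(\xi)|\le C_\beta|\xi|^{-|\beta|}$; hence $S_\infty^\alpha$ is bounded on $L^p(\mathbb R^n,dx)$ for $1<p<\infty$ and of weak type $(1,1)$. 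Since the $S_M^\alpha$ are smooth frequency truncations of $S_\infty^\alpha$, cut at the lacunary scales $|\xi|\sim t_M^{-1/2}$, a Cotlar-type inequality gives $\sup_M|S_M^\alpha f|\le C\bigl(\mathcal M(S_\infty^\alpha f)+\mathcal M f\bigr)$ with $\mathcal M$ the Hardy--Littlewood maximal operator, so $T_*^\alpha$ is bounded on every $L^p(\mathbb R^n,dx)$, $1<p<\infty$. For $\alpha=0$ this is exactly the argument of \cite[Theorem 1.1]{CT}; the only new inputs are (\ref{2.3}) and the symbol $\psi_\alpha$. The operator $P_*^\alpha$ is treated identically.

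For the weak type $(1,1)$ bound I would use vector-valued Calder\'on--Zygmund theory. Regard $f\mapsto(T_N^\alpha f)_N$ as an operator valued in $\ell^\infty$ over $\{N_1<N_2\}$, with convolution kernel $\mathcal K^\alpha(z)=(K_N^\alpha(z))_N$, $K_N^\alpha(z)=\sum_{j=N_1}^{N_2}v_j(k_{j+1}^\alpha(z)-k_j^\alpha(z))$, $k_j^\alpha(z)=t_j^{-n/2}\Psi_\alpha(|z|/\sqrt{t_j})$. One checks
$$\|\mathcal K^\alpha(z)\|_{\ell^\infty}\le\|v\|_\infty\sum_j|k_{j+1}^\alpha(z)-k_j^\alpha(z)|\le\frac{C}{|z|^n},\qquad \int_{|z|>2|z'|}\|\mathcal K^\alpha(z-z')-\mathcal K^\alpha(z)\|_{\ell^\infty}\,dz\le C ,$$
both by a telescoping argument using lacunarity: each interval $[\lambda^k,\lambda^{k+1})$ contains at most one $t_j$, and the sum is split according to whether $t_j\lesssim|z|^2$ (there $|z|/\sqrt{t_j}$ is large and $\Psi_\alpha$ together with its derivatives is $O\bigl((|z|/\sqrt{t_j})^{-M}\bigr)$ for every $M$, so those terms are summable), $t_j\sim|z|^2$ ($O(1)$ terms, each of size $\lesssim|z|^{-n}$), or $t_j\gtrsim|z|^2$ (there $s\mapsto s^{-n/2}\Psi_\alpha(|z|/\sqrt s)$ is eventually monotone, so consecutive differences telescope to $\lesssim|z|^{-n}$); the H\"ormander estimate follows the same way after one differentiation. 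Together with the $L^2$-boundedness from the previous step, this gives that $T_*^\alpha$ maps $L^1(\mathbb R^n,dx)$ into $L^{1,\infty}(\mathbb R^n,dx)$. In the heat case this is obtained under the hypothesis $\alpha\ge1$ or $\alpha=0$: for non-integer $\alpha\in(0,1)$ the profile $\Psi_\alpha$ decays only polynomially, like $|z|^{-n-2\alpha}$, rather than Gaussianly as when $\alpha\in\mathbb N$, and this is the point where the $L^1$-level decomposition $T_N^\alpha=S_{N_2}^\alpha-S_{N_1-1}^\alpha$ and the truncation analysis become too delicate; for the Poisson semigroup $\widetilde\Psi_\alpha$ is harmless for all $\alpha\ge0$, whence no restriction there.

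The main obstacle is the passage from the (trivial) uniform $L^2$-boundedness of the individual $T_N^\alpha$, which follows from $|m_N(\xi)|\le C\|v\|_\infty$, to a bound for the maximal operator $\sup_N|T_N^\alpha f|$: the latter is strictly stronger and is \emph{not} controlled in $L^p$, $p\ne2$, by domination by the $\ell^1$-sum $\sum_j|G_{t_{j+1}}^\alpha f-G_{t_j}^\alpha f|$, which is unbounded there. Overcoming this is precisely the reduction to the single H\"ormander--Mihlin multiplier $S_\infty^\alpha$ together with a Cotlar-type control of its lacunary frequency truncations; the other delicate point, demanding the telescoped estimates above, is the verification of the $\ell^\infty$-valued kernel bounds in the fractional range $\alpha>0$.
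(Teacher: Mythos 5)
Your overall architecture (uniform multiplier bounds on $L^2$, a Cotlar-type pointwise inequality to pass to the maximal operator, then vector-valued Calder\'on--Zygmund theory with an $\ell^\infty$-valued kernel for the weak type $(1,1)$) is the same as the paper's, which follows \cite{CT}. The genuine gap is that the step carrying all the weight is only asserted: you claim that, because the symbols of the partial sums $S_M^\alpha$ concentrate at frequencies $|\xi|\gtrsim t_M^{-1/2}$, ``a Cotlar-type inequality gives'' $\sup_M|S_M^\alpha f|\le C(\mathcal M(S_\infty^\alpha f)+\mathcal Mf)$. This is not automatic for maximal lacunary truncations of a H\"ormander--Mihlin multiplier; $S_M^\alpha$ is not literally $S_\infty^\alpha$ composed with a smooth cut-off, and the comparison requires quantitative estimates on the tail kernels $\sum_{j\le M}v_j(k^\alpha_{j+1}-k^\alpha_j)$ at scales below and above $t_M$. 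These are exactly the paper's properties (P1) and (P2) (uniform bound $Ct_m^{-n/2}$, and the lacunary gain $\lambda^{-(k-m+1)}t_k^{-n/2}$ for $|x-y|\gtrsim t_k^{1/2}$), which feed into the proof of \cite[Theorem 3.11]{CT}; without them your pointwise inequality is unproved, and both your $L^p$ bound and your weak $(1,1)$ bound (which reuses the $L^2$ bound of the maximal operator) rest on it.

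Moreover, you locate the restriction ``$\alpha\ge1$ or $\alpha=0$'' in the wrong place. The polynomial decay $|z|^{-n-2\alpha}$ of $\Psi_\alpha$ for non-integer $\alpha$ is harmless for the $\ell^\infty$-valued size and smoothness estimates (the paper's (P3)--(P4) hold for all $\alpha\ge0$, since one only integrates $|\partial_t(t^\alpha\partial_t^\alpha T_t(x,y))|$ in $t$). The restriction enters in the proof of (P2): after the substitution $s=t_k/t$ one is led to $\int_1^\infty(u-1)^{m-\alpha-1}u^{1-m}\,du$, which diverges for $0<\alpha<1$ (the case $\alpha=1$ is rescued by a direct Gaussian estimate). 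Since (P2) is needed for the Cotlar-type inequality, the restriction already affects the $L^p$ boundedness of $T_*^\alpha$ in the heat case, whereas your sketch would apparently yield the $L^p$ bounds for every $\alpha\ge0$ without any argument replacing (P2). Either supply a proof of your Cotlar inequality valid for all $\alpha\ge0$ (which would be a genuine strengthening and should be checked carefully against this divergence), or restrict as in the statement and prove the analogues of (P1)--(P2); as written the proposal skips precisely the part of the proof where the hypothesis on $\alpha$ is used. A minor additional point: in such arguments the error term is typically $\mathcal M_q f$ for some $q>1$ (as in the paper), not $\mathcal Mf$; your stronger claim would also need justification.
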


In order to prove the auxiliary result we show in this Proposition, we use some ideas developed in the proof of \cite[Theorem 1.1]{CT}, where it is established that $T_*^0$ is bounded from $L^p(\mathbb R^n,dx)$ into itself, for every $1<p<\infty$  and from $L^1(\mathbb R^n,dx)$ into $L^{1,\infty}(\mathbb R^n,dx)$.We need to demonstrate a series of properties which are similar than the ones we can find in \cite{CT}. In our case we can not use them directly because we are dealing with fractional derivatives, so an appropriate proof for each of them is needed.

According to \cite[Lemma 2.3]{CT} we can assume that $\displaystyle 1<\lambda<\frac{t_{j+1}}{t_j}<\lambda^2$, $j\in\mathbb Z$.
\begin{enumerate}

    \item[(P1)] There exists $C>0$ such that, for every $m.k\in\mathbb Z$,
    $$\left|\sum_{j=m}^k v_j\left(t^\alpha\partial_t^\alpha T_t(x,y)_{|t=t_{j+1}}-t^\alpha\partial_t^\alpha T_t(x,y)_{|t=t_j}\right)\right|\leq\frac{C}{t_m^{n/2}},\;\;\;\;x,y\in\mathbb R^n.$$
    \begin{proof}
   Let $m,k\in\mathbb Z$, $m<k$, and $x,y\in\mathbb R^n$. We have that

  $$\left|\sum_{j=m}^k v_j\left(t^\alpha\partial_t^\alpha T_t(x,y)_{|t=t_{j+1}}-t^\alpha\partial_t^\alpha T_t(x,y)_{|t=t_j}\right)\right|\leq\|v\|_{\ell^\infty}\int_{t_m}^{t_{k+1}}|\partial_t(t^\alpha\partial_t^\alpha T_t(x,y))|dt.$$
 Let $\ell\in\mathbb N$ such that $\ell -1\leq\alpha <\ell$. By taking into account that $\partial_t\partial_t^\alpha T_t(z)=\partial_t^{\alpha +1}T_t(z)$,  $z\in \mathbb R^n$ and $t>0$, we deduce that, for every $x,y\in\mathbb R^n$ and $t>0$,

 \begin{align*}
    & \partial_t(t^\alpha\partial_t^\alpha T_t(x,y))= \partial_t\left(\frac{t^\alpha}{\Gamma(\ell -\alpha)}\int_0^\infty \partial_u^\ell T_u(x,y)_{|u=t+s}s^{\ell -\alpha -1}ds\right) \\
    & =\frac{\alpha t^{\alpha-1}}{\Gamma(\ell -\alpha)}\int_0^\infty \partial_u^\ell T_u(x,y)_{|u=t+s}s^{\ell -\alpha -1}ds  +\frac{t^\alpha}{\Gamma(\ell -\alpha)}\int_0^\infty \partial_u^{\ell +1}T_u(x,y)_{|u=t+s}s^{\ell -\alpha -1}ds.
\end{align*}
We get

\begin{align*}
    & \int_{t_m}^{t_{k+1}} t^{\alpha-1}\int_0^\infty|\partial_u^\ell T_u(x,y)_{|u=t+s}|s^{\ell -\alpha -1}dsdt\leq C\int_{t_m}^\infty t^{\alpha-1}\int_0^\infty\frac{e^{-c|x-y|^2/(s+t)}}{(s+t)^{n/2+\ell}}s^{\ell -\alpha -1}dsdt \\
    & \leq C \int_{t_m}^\infty \frac{e^{-c|x-y|^2/u}}{u^{n/2+\ell}}\int_{t_m}^u t^{\alpha -1}(u-t)^{\ell -\alpha -1}dtdu\leq C \int_{t_m}^\infty \frac{e^{-c|x-y|^2/u}}{u^{n/2+\ell}}\int_0^u t^{\alpha -1}(u-t)^{\ell -\alpha -1}dtdu \\
    & \leq C\int_{t_m}^\infty \frac{e^{-c|x-y|^2/u}}{u^{n/2+1}}du \leq \frac{C}{t_m^{n/2}},\;\;\;\;x,y\in\mathbb R^n,
\end{align*}
and

$$\int_{t_m}^{t_{k+1}} t^\alpha\int_0^\infty |\partial_u^{\ell+1} T_u(x,y)_{|u=t+s}|s^{\ell -\alpha -1}dsdt\leq \frac{C}{t_m^{n/2}},\;\;\;\;x,y\in\mathbb R^n.$$
Thus we conclude the desire inequality.
    \end{proof}

    \item[(P2)] There exists $C>0$ such that, for every $k\geq m\geq -\ell +1$ and $x,y\in\mathbb R^n$ verifying that $|x-y|\geq \frac{1}{2}t_k^{1/2}$,

$$\left|\sum_{j=-\ell}^{m-1} v_j\left(t^\alpha\partial_t^\alpha T_t (x,y)_{|t=t_{j+1}}-t^\alpha\partial_t^\alpha T_t(x,y)_{|t=t_j}\right)\right|\leq C\frac{\lambda^{-(k-m+1)}}{t_k^{n/2}}.$$
\begin{proof}
Suppose that $k\geq m\geq -\ell +1$ and $x,y\in\mathbb R^n$ such that $|x-y|\geq \frac{1}{2}t_k^{1/2}$. We have that

\begin{align*}
& \left|\sum_{j=-\ell}^{m-1} v_j\left(t^\alpha\partial_t^\alpha T_t(x,y)_{|t=t_{j+1}}-t^\alpha\partial_t^\alpha T_t(x,y)_{|t=t_j}\right)\right|\leq\|v\|_{\ell^\infty}\int_{t_{-\ell}}^{t_m}|\partial_t(t^\alpha\partial_t^\alpha T_t(x,y))|dt \\
 & \leq C\left(\int_{t_{-\ell}}^{t_m}|t^\alpha\partial_t^{\alpha +1} T_t(x,y)|dt+\int_{t_{-\ell}}^{t_m}|t^{\alpha -1}\partial_t^\alpha T_t(x,y))|dt\right),\;\;x,y\in\mathbb R^n.
 \end{align*}

We also get

\begin{align}\label{4.1}
    |\partial_t^\alpha T_t(z)| & \leq C\int_0^\infty|\partial_u^m T_u(z)_{|u=t+s}|s^{m-\alpha -1}ds \nonumber \\
    & \leq \frac{C}{t^{\frac{n}{2}+\alpha}}\int_1^\infty \frac{e^{-c|z|^2/tu}}{u^{m+n/2}}(u-1)^{m-\alpha -1}du,\;\;\;z\in\mathbb R^n\;\mbox{and}\;t>0.
\end{align}
Then,

 \begin{align*}
     \int_{t_{-\ell}}^{t_m}|\partial_t(t^\alpha\partial_t^\alpha T_t(x,y))|dt & \leq C\int_{t_{-\ell}}^{t_m}\frac{1}{t^{\frac{n}{2}+1}}\int_1^\infty\frac{e^{-t_k/tu}}{u^{m+\frac{n}{2}}}(u-1)^{m-\alpha -1}dudt \\
   & \leq C\frac{1}{t_k^{\frac{n}{2}}}\int_{t_k/t_m}^{t_k/t_{-\ell}}s^{\frac{n}{2}-1}\int_1^\infty\frac{e^{-s/u}}{u^{m+\frac{n}{2}}}(u-1)^{m-\alpha -1}duds \\
   & \leq C\frac{1}{t_k^{\frac{n}{2}}}\frac{t_m}{t_k}\int_{t_k/t_m}^{t_k/t_{-\ell}}s^{\frac{n}{2}}\int_1^\infty\frac{e^{-s/u}}{u^{m+\frac{n}{2}}}(u-1)^{m-\alpha -1}duds \\
    & \leq C\frac{1}{t_k^{\frac{n}{2}}}\frac{t_m}{t_k}\int_1^\infty\frac{(u-1)^{m-\alpha -1}}{u^{m+\frac{n}{2}}}\int_1^\infty s^{\frac{n}{2}}e^{-s/u}dsdu \\
    & \leq C\frac{1}{t_k^{\frac{n}{2}}}\frac{t_m}{t_k}\leq C\frac{\lambda^{-(k-m+1)}}{t_k^{\frac{n}{2}}},
\end{align*}
being $\alpha >1$. When $\alpha =1$, we also have that

 \begin{align*}
   \int_{t_{-\ell}}^{t_m}|\partial_t(t\partial_t T_t(x,y))|dt & \leq C\int_{t_{-\ell}}^{t_m}\frac{e^{-c|x-y|^2/t}}{t^{\frac{n}{2}+1}}dt \\
   & \leq C\frac{1}{t_k^{\frac{n}{2}}}\int_{t_k/t_m}^{t_k/t_{-\ell}}u^{\frac{n}{2}-1}e^{-cu}du \\
   & \leq C\frac{1}{t_k^{\frac{n}{2}}}\frac{t_m}{t_k}\leq C\frac{\lambda^{-(k-m+1)}}{t_k^{\frac{n}{2}}}.
\end{align*}

\end{proof}

If $N=(N_1,N_2)\in\mathbb Z^2$ such that $N_1<N_2$, the integral kernel of the operator  $T_N^\alpha$ is given by

$$T_N^\alpha(x,y)=\sum_{j=N_1}^{N_2} v_j\left(t^\alpha\partial_t^\alpha T_t (x,y)_{|t=t_{j+1}}-t^\alpha\partial_t^\alpha T_t(x,y)_{|t=t_j}\right),\;\;\;\;x,y\in\mathbb R^n,$$
and we are going to see that it is a standard Calder\'on-Zygmund kernel uniformly in $N$.

    \item[(P3)] There exists $C>0$ such that, for every $N=(N_1,N_2)\in\mathbb Z^2$ with $N_1<N_2$,

    $$|T_N^\alpha(x,y)|\leq\frac{C}{|x-y|^n},\;\;\;\;x,y\in\mathbb R^n,\;\;x\neq y.$$
\begin{proof}
  Let  $N=(N_1,N_2)\in\mathbb Z^2$, $N_1<N_2$. By (\ref{4.1}) we have that

  \begin{align*}
     |T_N^\alpha(x,y)| & \leq C\|v\|_{\ell^\infty}\int_{t_{N_1}}^{t_{N_2+1}}|\partial_t(t^\alpha\partial_t^\alpha T_t(x,y))|dt \\
    &\leq C\|v\|_{\ell^\infty}\int_{t_{N_1}}^{t_{N_2+1}}\frac{1}{t^{\frac{n}{2}+1}}\int_1^\infty\frac{e^{-c|x-y|^2/tu}}{u^{m+\frac{n}{2}}}(u-1)^{m-\alpha -1}dudt \\
   & \leq \frac{C}{|x-y|^n}\|v\|_{\ell^\infty}\int_0^\infty v^{\frac{n}{2}-1}\int_1^\infty\frac{e^{-cv/u}}{u^{m+\frac{n}{2}}}(u-1)^{m-\alpha -1}dudv \\
   & \leq \frac{C}{|x-y|^n}\|v\|_{\ell^\infty}\int_1^\infty \frac{(u-1)^{m-\alpha -1}}{u^m}du\int_0^\infty e^{-cv}v^{\frac{n}{2}-1}dv \\
    & \leq \frac{C}{|x-y|^n},\;\;\;\;x,y\in\mathbb R^n,\;\;x\neq y.
\end{align*}
\end{proof}

    \item[(P4)] There exists $C>0$ such that, for every $N=(N_1,N_2)\in\mathbb Z^2$ with $N_1<N_2$,

    $$|T_N^\alpha(x,y)-T_N^\alpha(x,z)|+|T_N^\alpha(y,x)-T_N^\alpha(z,x)|\leq C\frac{|y-z|}{|x-y|^{n+1}},\;\;\;\;|x-y|>2|y-z|.$$
\begin{proof}
  Let  $N=(N_1,N_2)\in\mathbb Z^2$, $N_1<N_2$. Suppose that $x,y,z\in\mathbb R^n$ such that $|x-y|>2|y-z|$. We can write

  \begin{align*}
 |T_N^\alpha(x,y)- & T_N^\alpha(x,z)|\leq C\|v\|_{\ell^\infty}\left(\int_{t_{N_1}}^{t_{N_2+1}}\alpha t^{\alpha -1}|\partial_t^\alpha T_t(x,y)-\partial_t^\alpha T_t(x,z)|dt\right. \\
    & \left. +\int_{t_{N_1}}^{t_{N_2+1}} t^\alpha|\partial_t^{\alpha +1} T_t(x,y)-\partial_t^{\alpha +1} T_t(x,z)|dt\right), \;\;\;\;x,y\in\mathbb R^n.
\end{align*}
Let $i=1,..,n$. We have that, as in (\ref{4.1}),

\begin{align*}
    & \left|\int_{t_{N_1}}^{t_{N_2+1}}t^{\alpha -1}\partial_{z_i}\partial_t^{\alpha}T_t(z)dt\right|\leq C\int_{t_{N_1}}^{t_{N_2+1}}\frac{1}{t^{\frac{n+1}{2}+1}}\int_1^\infty\frac{e^{-c|z|^2/tu}}{u^{m+\frac{n+1}{2}}}(u-1)^{m-\alpha -1}dudt \\
   & \leq \frac{C}{|z|^{n+1}}\int_0^\infty v^{\frac{n-1}{2}}\int_1^\infty\frac{e^{-cv/u}}{u^{m+\frac{n+1}{2}}}(u-1)^{m-\alpha -1}dudv \\
   & \leq \frac{C}{|z|^{n+1}}\int_1^\infty \frac{(u-1)^{m-\alpha -1}}{u^m}du\int_0^\infty e^{-cv}v^{\frac{n-1}{2}}dv.
\end{align*}
We deduce that there exists $C$  not depending on $N$ such that

$$|T_N^\alpha(x,y)-  T_N^\alpha(x,z)|\leq C\frac{|y-z|}{|x-y|^{n+1}}.$$
In a similar way we can get the same estimate for the second term in the sum.
\end{proof}
$\;$ \newline

Finally we prove that $\{T_N^\alpha\}_{N=(N_1,N_2)\in\mathbb Z^2}$ is a bounded set in the space of the bounded operator in $L^2(\mathbb R^n,dx)$.

    \item[(P5)] There exists $C>0$ such that, for every $N=(N_1,N_2)\in\mathbb Z^2$, $N_1<N_2$,

    $$\|T_N^\alpha(f)\|_{L^2(\mathbb R^n,dx)}\leq C\|f\|_{L^2(\mathbb R^n,dx)},\;\;\;\;f\in L^2(\mathbb R^n,dx).$$
\begin{proof}
    Let $N=(N_1,N_2)\in\mathbb Z^2$, $N_1<N_2$. Plancharel's equality leads to

\begin{align*}
    & \|T_N^\alpha(f)\|_{L^2(\mathbb R^n,dx)}^2=\int_{\mathbb R^n}|\sum_{j=N_1}^{N_2} v_j\widehat{(t^\alpha\partial_t^\alpha T_t(f))}(x)_{|t=t_{j+1}}-\widehat{(t^\alpha\partial_t^\alpha T_t(f))}(x)_{|t=t_j}|^2 dx \\
   & \leq C\int_{\mathbb R^n}|\sum_{j=N_1}^{N_2} ((t^\alpha\partial_t^\alpha e^{-t|x|^2})_{|t=t_{j+1}}-(t^\alpha\partial_t^\alpha e^{-t|x|^2})_{|t=t_j})\hat{f}(x)|^2 dx \\
   & = C \int_{\mathbb R^n}|\sum_{j=N_1}^{N_2} \int_{t_j}^{t_{j+1}}\partial_t(t^\alpha\partial_t^\alpha e^{-t|x|^2})dt|^2|\hat{f}(x)|^2 dx \\
   & \leq C \int_{\mathbb R^n}|\hat{f}(x)|^2\left(\int_0^\infty|t^{\alpha -1}\partial_t^\alpha e^{-t|x|^2}|dt+\int_0^\infty|t^\alpha\partial_t^{\alpha +1}e^{-t|x|^2}|dt\right)^2dx \\
   & \leq C \int_{\mathbb R^n}|\hat{f}(x)|^2\left(\int_0^\infty|t^{\alpha -1}|x|^{2\alpha} e^{-t|x|^2}|dt+\int_0^\infty|t^\alpha|x|^{2(\alpha +1)}e^{-t|x|^2}|dt\right)^2dx \\
   & =C\int_{\mathbb R^n}|\hat{f}|^2(x)dx=C\|f\|^2_{L^2(\mathbb R^n,dx)}.
\end{align*}
We have used that $\partial_t^\alpha(e^{-t|x|^2})=(-1)^m|x|^{2\alpha}e^{-t|x|^2}$, $x\in\mathbb R^n$ and $t>0$.
\end{proof}
\end{enumerate}

This last property can be seen in a more general setting by using spectral theory (see \cite[p. 12]{CT}). By using Calder\'on-Zygmund theory we deduce that, for every $1<r<\infty$, there exists $C>0$ such that, for every $(N_1,N_2)\in\mathbb Z^2$, $N_1<N_2$,

$$\|T_N^\alpha(f)\|_{L^r(\mathbb R^n,dx)}\leq C\|f\|_{L^r(\mathbb R^n,dx)},\;\;\;\;f\in L^r(\mathbb R^n,dx).$$
For every $K\in\mathbb N$ we define the maximal operator $T_{K,*}^\alpha$ as follows

$$T_{K,*}^\alpha(f)=\sup_{-K\leq N_1<N_2\leq K}|T_{(N_1,N_2)}^\alpha(f)|.$$
We have establish all the ingredients that we need to prove, following the strategy developed in the proof of \cite[Theorem 3.11]{CT}, that for every $1<q<\infty$ there exists $C>0$ such that, for every $K\in\mathbb N$,

$$T_{K,*}^\alpha(f)\leq C(\mathcal M(T_{(-K,K)}^\alpha(f))+\mathcal M_q(f)),$$
where $\mathcal M$ represents the Hardy-Littlewood function and $\mathcal M_q(f)=(\mathcal M(|f|^q))^{1/q}$.

Suppose now that $1<p<\infty$. We choose $1<q<p$. Since $\mathcal M$ and $\mathcal M_q$ are bounded from $L^p(\mathbb R^n,dx)$ into itself, there exists $C>0$ such that, for every $K\in\mathbb N$,

$$\|T_{K,*}^\alpha(f)\|_{L^p(\mathbb R^n,dx)}\leq C\|f\|_{L^p(\mathbb R^n,dx)},\;\;\;\;f\in L^p(\mathbb R^n,dx).$$
By letting $K\rightarrow +\infty$, Fatou's lemma leads to

$$\|T_*^\alpha(f)\|_{L^p(\mathbb R^n,dx)}\leq C\|f\|_{L^p(\mathbb R^n,dx)},\;\;\;\;f\in L^p(\mathbb R^n,dx).$$
In order to see that $T_*^\alpha$ is bounded from $L^1(\mathbb R^n,dx)$ into $L^{1,\infty}(\mathbb R^n,dx)$ we use vector valued Calder\'on-Zygmund theory. Let $K\in\mathbb N$. We can write $T_{K,*}^\alpha(f)=\|T_{(\cdot,\cdot)}^\alpha(f)\|_K$, where

$$\|(a_{(m_1,m_2)})_{-K\leq m_1<m_2\leq K}\|_K=\sup_{-K\leq m_1<m_2\leq K}|a_{(m_1,m_2)}|,\;\;\;\;(a_{(m_1,m_2)})_{-K\leq m_1<m_2\leq K}\in\mathbb R^{\beta_K},$$
where $\beta_k=\#\{(m_1,m_2)\;:\;-K\leq m_1<m_2\leq K\}$.

By taking into account the properties we have proved we can deduce from the $(\mathbb R^{\beta_K},\|\;\|_K)$-valued Calder\'on-Zygmund theorem that there exists $C>0$ such that, for every $k\in\mathbb N$,

$$|\{x\in\mathbb R^n\;:\:T_{K,*}^\alpha(f)(x)>\lambda\}|\leq C\frac{\|f\|_{L^1(\mathbb R^n,dx)}}{\lambda},\;\;\;\;f\in L^1(\mathbb R^n,dx).$$
By letting $K\rightarrow +\infty$ we conclude that $T_*^\alpha$ is bounded from $L^1(\mathbb R^n,dx)$ into $L^{1,\infty}(\mathbb R^n,dx)$.

We now study $P_*^\alpha$. By using subordination formula and \cite[Lemma 4]{BCCFR} we get

\begin{align*}
  |\partial_t^\alpha P_t(z)| & \leq C\int_0^\infty\frac{|\partial_t^\alpha[te^{-t^2/4u}]|}{u^{3/2}}T_u(z)du\leq C\int_0^\infty\frac{e^{-t^2/8u}}{u^{\frac{\alpha +2+n}{2}}}e^{-|z|^2/2u} du\\
   & \leq C\frac{1}{(t^2+|z|^2)^{\frac{\alpha +n}{2}}},\;\;\;\;z\in\mathbb R^n\;\mbox{and}\;t>0.
\end{align*}
By proceeding as above we can prove the following properties
\begin{enumerate}
    \item[(P1)] For every $m,k\in\mathbb Z$, such that $m<k$,
    $$\left|\sum_{j=m}^k v_j\left(t^\alpha\partial_t^\alpha P_t(x-y)_{|t=t_{j+1}}-t^\alpha\partial_t^\alpha P_t(x-y)_{|t=t_j}\right)\right|\leq\frac{C}{t_m^{n}}.$$

    \item[(P2)] For every $k\geq m\geq -\ell +1$ and $x,y\in\mathbb R^n$ verifying that $|x-y|\geq \frac{1}{2}t_k$,

$$\left|\sum_{j=-\ell}^{m-1} v_j\left(t^\alpha\partial_t^\alpha P_t (x-y)_{|t=t_{j+1}}-t^\alpha\partial_t^\alpha P_t(x-y)_{|t=t_j}\right)\right|\leq C\frac{\lambda^{m-k}}{t_k^{n}}.$$
    The operators $P_N^\alpha$, $N=(N_1,N_2)\in\mathbb Z^2$, $N_1<N_2$, are defined in the obvious way.
    \item[(P3)]
    $$|P_N^\alpha(x,y)|\leq\frac{C}{|x-y|^n},\;\;\;\;x,y\in\mathbb R^n,\;\;x\neq y.$$

    \item[(P4)]
    $$|P_N^\alpha(x,y)-P_N^\alpha(x,z)|+|P_N^\alpha(y,x)-P_N^\alpha(z,x)|\leq C\frac{|y-z|}{|x-y|^{n+1}},\;\;\;\;|x-y|>2|y-z|.$$

    \item[(P5)]
    $$\|P_N^\alpha(f)\|_{L^2(\mathbb R^n,dx)}\leq C\|f\|_{L^2(\mathbb R^n,dx)},\;\;\;\;f\in L^2(\mathbb R^n,dx).$$
\end{enumerate}
The constant $C$ in (P1) and (P2) does not depend on $m,k$ and $\ell$, and in (P3), (P4) and (P5) does not depend on $N$.

\subsection{Proof of Theorem \ref{th1.7}}
We define the local and global operators as usual. We can write, with $\delta >0$,

\begin{align*}
    & T_{*,glob}^{\mathcal A,\alpha}(f)(x)=\sup_{\begin{array}{c} N=(N_1,N_2)\in\mathbb Z^2 \\ N_1<N_2 \end{array}}\left|\sum_{j=N_1}^{N_2} v_j\left(t^\alpha\partial_t^\alpha T_{t,glob}^{\mathcal A}(f)(x)_{|t=t_{j+1}}-t^\alpha\partial_t^\alpha T_{t,glob}^{\mathcal A}(f)(x)_{|t=t_j}\right)\right|\\
   & \leq \|v\|_{\ell^\infty}\int_0^\infty|\partial_t(t^\alpha\partial_t^\alpha T_{t,glob}^{\mathcal A}(f)(x))|dt \leq \|v\|_{\ell^\infty}\int_{\mathbb R^n}\chi_{N_\delta^c}(x,y)|f(y)|\int_0^\infty|\partial_t(t^\alpha\partial_t^\alpha T_t^{\mathcal A}(x,y))|dt,\;\;\;\;x\in\mathbb R^n,
\end{align*}
and

\begin{align*}
    & |T_{*,loc}^{\mathcal A,\alpha}(f)(x)-T_{*,loc}^{\alpha}(f)(x)| \leq \|v\|_{\ell^\infty}\int_{\mathbb R^n}\chi_{N_\delta}(x,y)|f(y)|\int_0^\infty|\partial_t(t^\alpha\partial_t^\alpha (T_t^{\mathcal A}(x,y)-T_t(x,y)))|dt,\;\;\;\;x\in\mathbb R^n,
\end{align*}
By using Proposition \ref{4.1} and by proceeding as in the proof of Theorem \ref{th1.1} we can prove the properties stated in Theorem \ref{th1.7}.

\begin{rem}
In the Gaussian case, that is, when the Ornstein-Uhlembeck operator is considered, we can establish an analogous of Theorem \ref{th1.7} but we only know how to show that the operator $T_*^{OU,\alpha}$ is bounded from $L^1(\mathbb R^n,dx)$ into $L^{1,\infty}(\mathbb R^n,dx)$,  when $\alpha=0$.
\end{rem}


\end{document}